\documentclass[11pt]{article}

\usepackage{amsmath,amssymb,amsthm,latexsym}

\usepackage{color}
\usepackage{comment}

\allowdisplaybreaks

\hoffset -0.3 truein

\setlength{\topmargin}{-1.5cm}
\setlength{\headheight}{1.5cm}
\setlength{\headsep}{0.3cm}
\setlength{\textheight}{22.5cm}
\setlength{\oddsidemargin}{0.5cm}
\setlength{\evensidemargin}{0.5cm}
\setlength{\textwidth}{16.0cm}

\newtheorem{theorem}{Theorem}[section]

\newtheorem{corollary}[theorem]{Corollary}

\newtheorem{lemma}[theorem]{Lemma}
\newtheorem{proposition}[theorem]{Proposition}

\newtheorem{definition}[theorem]{Definition}
\newtheorem{remark}[theorem]{Remark}

\numberwithin{equation}{section}
\newtheorem{assumption}[theorem]{Assumption}

\def\be{\begin{equation}}
\def\ee{\end{equation}}
\def\bes{\begin{equation*}}
\def\ees{\end{equation*}}

\def\vp{{\varphi}}

\def\wt{\widetilde}

\def\eps{\varepsilon}
\def\lam{{\lambda}}
\def\ol{\overline}

\def\qed{{\hfill $\square$ \bigskip}}

\def\esssup{{\mathop{\rm ess \; sup \, }}}
\def\essinf{{\mathop{\rm ess \; inf \, }}}
\def\essosc{{\mathop{\rm ess \; osc \, }}}

\def\sE {{\cal E}} \def\sF {{\cal F}}\def\sL {{\cal L}}
\def\sN {{\cal N}}

\def\bE {{\mathbb E}} \def\bN {{\mathbb N}}
\def\bP {{\mathbb P}} \def\bR {{\mathbb R}}

\def\EHR{\mathrm{EHR}}
\def\PHR{\mathrm{PHR}}
\def\PHI{\mathrm{PHI}}

\def\Ca{\mathrm{Cap}}
\def\Ex{\mathrm{Ext}}

\def\CSJ{\mathrm{CSJ}}

\def\PI{\mathrm{PI}}

\def\IJ{\mathrm{IJ}}
\def\FK{\mathrm{FK}}
\def\VD{\mathrm{VD}}
\def\RVD{\mathrm{RVD}}

\def\PHI{\mathrm{PHI}}

\definecolor{dred}{rgb}{0.8, 0.0, 0.0}

\def\UJS{\mathrm{UJS}}

\def\EHI{\mathrm{EHI}}

\def\WEHI{\mathrm{WEHI}}
\def\E{\mathrm{E}}

\def\J{\mathrm{J}}
\def\<{\langle}
\def\>{\rangle}
\def\FF{{\cal F}}

\def\T{\mathrm{Tail}}

\begin{document}

\title{\bf Elliptic Harnack inequalities for symmetric non-local Dirichlet forms}

\author{Zhen-Qing Chen,
\quad Takashi Kumagai\footnote{Research
partially supported by the Grant-in-Aid for Scientific Research (A)
25247007 and 17H01093, Japan.}
\quad{and} \quad
 Jian Wang\footnote{Research partially supported by the National
Natural Science Foundation of China (No.\ 11522106), the JSPS postdoctoral fellowship
(26$\cdot$04021), the Fok Ying Tung
Education Foundation (No.\ 151002), National Science Foundation of
Fujian Province (No.\ 2015J01003), the Program for Probability and Statistics: Theory and Application (No.\ IRTL1704), and Fujian Provincial
Key Laboratory of Mathematical Analysis and its Applications
(FJKLMAA).}}

\date{}
\maketitle

\begin{abstract}
 We study relations and  characterizations of various elliptic Harnack inequalities for symmetric non-local
 Dirichlet forms on metric measure spaces.
 We allow the scaling function be state-dependent and  the state space possibly disconnected.
 Stability of elliptic Harnack inequalities is established under certain  regularity conditions
 and implication for a priori H\"older regularity of harmonic functions is explored.
 New equivalent statements
 for parabolic Harnack inequalities of non-local Dirichlet forms are obtained in terms of elliptic Harnack inequalities.
\end{abstract}

\section{Introduction and Main Results} \label{sec:intro}

The classical elliptic Harnack inequality
asserts that there exists a universal constant $c_1=c_1(d)$ such that for every $x\in \bR^d$, $r>0$
and every non-negative harmonic function $h$ in the ball $B(x_0, 2r)\subset \bR^d$,
\begin{equation}\label{e:1.1}
\esssup_{ B(x_0,r)} h\le c\, \essinf_{ B(x_0,r)} h.
\end{equation}
A celebrated theorem of Moser (\cite{Mos})  says that
such elliptic Harnack inequality holds for non-negative harmonic functions of any
uniformly elliptic divergence operator on
$\bR^d$. One of the important consequences of Moser's
 elliptic Harnack inequality   is
that it implies a priori elliptic H\"older regularity
(see Definition \ref{thm:defHR} below) for harmonic functions
of uniformly elliptic operators of divergence form.
 Because of the fundamental importance role played by a priori elliptic H\"older regularity  for
 solutions of elliptic and parabolic differential equations, elliptic Harnack inequality  and  parabolic Harnack inequality, which is a parabolic version of the Harnack inequality (see Definition \ref{thm:defPHI} below), have been investigated extensively for local operators (diffusions) on various spaces such as
manifolds, graphs and metric measure spaces. It is also very important to consider whether such Harnack inequalities are stable under perturbations of
the associated quadratic forms and under rough isometries. The stability problem of elliptic Harnack inequality  is a difficult one. In \cite{B}, R. Bass proved  stability
of elliptic Harnack inequality   under some strong global bounded geometry condition.
Quite recently, this
assumption has been relaxed significantly by Barlow-Murugan (\cite{BM}) to bounded geometry condition.

 For non-local operators, or equivalently, for discontinuous Markov processes,
harmonic functions are required to be non-negative on the whole space in the formulation of  Harnack inequalities
due to the jumps from the
processes; see $\EHI$ (elliptic Harnack inequality) in Definition \ref{thmdefEHIq}(i) below.
In Bass and Levin \cite{BL}, this version of $\EHI$ has been established for a class of non-local operators.
If we only require harmonic functions to be non-negative
in the ball $B(x_0, 2r)$, then the classical elliptic Harnack inequality  \eqref{e:1.1} does not need to hold. Indeed, Kassmann \cite{K1} constructed such a
counterexample for fractional Laplacian on $\bR^d$.
On the other hand,
for non-local operators,
   it is not known whether $\EHI$ implies a priori
    elliptic H\"older regularity
($\EHR$) and we suspect it is not (although parabolic Harnack inequality  ($\PHI$) does imply
  parabolic H\"older regularity and hence $\EHR$, see Theorem \ref{T:PHI} below).
To address this problem, some versions
 of elliptic Harnack inequalities that  imply $\EHR$ are considered in some literatures such as \cite{CKP,CKP2,K1},
in connection with the
Moser's iteration method.  We note that there are now many
 related work on $\EHI$ and $\EHR$ for harmonic functions of non-local operators; in addition to the papers mentioned above; see, for instance, \cite{BS,CK1,CK2,CS,DK,GHH,Ha,HN,K2,Sil} and references therein.
This is only a partial list of the vast literature on the subject.

The aim of this paper is to investigate  relations among various elliptic Harnack inequalities   and to study their stability for symmetric non-local Dirichlet forms
 under a general setting of metric measure spaces.
This paper can be regarded as a continuation of \cite{CKW1,CKW2}, which are concerned with the stability of two-sided heat kernel estimates,
upper bound heat kernel estimates and   $\PHI$ for non-local Dirichlet forms on general metric measure spaces.
We point out that the setting of this paper is much more general than that of \cite{CKW1, CKW2} in the sense that the scale function
in this paper can
 be state-dependent. As a byproduct, we obtain new equivalent statements for $\PHI$ in terms of $\EHI$.

\subsection{Elliptic Harnack inequalities}

Let $(M,d)$ be a locally compact separable metric space, and
$\mu$ a positive Radon measure on $M$ with full support.
We will refer to such a triple $(M, d, \mu)$ as a \emph{metric measure space}.
Throughout the paper, we assume that $\mu (M)=\infty$.
We   emphasize that we do not assume $M$ to be connected nor
 $(M, d)$ to be geodesic.

We consider a regular symmetric \emph{Dirichlet form} $(\sE, \sF)$
on $L^2(M; \mu)$ of pure jump type; that is,
$$
\sE(f,g)=\int_{M\times M\setminus \textrm{diag}}(f(x)-f(y))(g(x)-g(y))\,J(dx,dy),\quad f,g\in \sF,
$$
where \textrm{diag} denotes the diagonal set $\{(x, x): x\in M\}$
and $J(\cdot,\cdot)$ is a symmetric Radon measure   on $M\times M\setminus \textrm{diag}$.
Since $(\sE,\sF)$ is regular, each function $f\in \sF$ admits a
quasi-continuous version $\tilde f$ (see
\cite[Theorem 2.1.3]{FOT}). In the paper, we will always represent
$f\in \FF$ by its quasi-continuous version  without writing $\tilde f$.
Let
$\sL$ be the (negative definite) $L^2$-\emph{generator} of $\sE$; this is,
$\sL$  is the
self-adjoint operator in $L^2(M; \mu)$ such that
$$
 \sE (f,g) = -\langle \sL f,g \rangle \quad \hbox{for all }
f \in \mathcal{D}(\sL) \hbox{ and } g \in \sF,
$$ where
$\langle  \cdot , \cdot \rangle$ denotes the inner product in $L^2(M; \mu)$.
  Let $\{P_t\}_{t\geq 0}$  be
its associated $L^2$-\emph{semigroup}.
 Associated with the regular
Dirichlet form $(\sE, \sF)$ on $L^2(M; \mu)$ is an $\mu$-symmetric
\emph{Hunt process} $X=\{X_t, t \ge 0;  \bP^x, x \in M\setminus
\sN\}$. Here $\sN$ is a properly exceptional set for $(\sE, \sF)$ in
the sense that
 $\mu(\sN)=0$ and
$\bP^x( X_t \in \sN \hbox{ for some } t>0 )=0$ for all $x \in M\setminus\sN$. This Hunt process is unique up to a
properly exceptional set --- see \cite[Theorem 4.2.8]{FOT}.
We fix $X$ and $\sN$, and write $ M_0 = M \setminus \sN.$
While the semigroup $\{P_t,   t\ge0\} $ associated with $\sE$ is defined on $L^2(M; \mu)$,
a more precise version with better regularity properties can be obtained,
if we set, for any bounded Borel measurable function $f$ on $M$,
$$ P_t f(x) = \bE^x f(X_t), \quad x \in M_0.
$$

\begin{definition}\label{thm:defvdrvd} \rm Denote by $B(x,r)$ the ball in $(M,d)$ centered at $x$ with radius $r$, and set
\[
V(x,r) = \mu(B(x,r)).
\]
(i) We say that $(M,d,\mu)$ satisfies the {\it volume doubling property} ($\VD$) if
there exists a constant ${C_\mu}\ge 1$ such that
for all $x \in M$ and $r > 0$,
\be \label{e:vd}
 V(x,2r) \le {C_\mu} V(x,r).
 \ee
(ii) We say that $(M,d,\mu)$ satisfies the {\it reverse volume doubling property} ($\RVD$)
if there exist positive constants
$d_1$ and ${c_\mu}$ such that
for all $x \in M$ and $0<r\le R$,
\be \label{e:rvd}
\frac{V(x,R)}{V(x,r)}\ge {c_\mu}  \Big(\frac Rr\Big)^{d_1}.
\ee
 \end{definition}

Since $\mu$ has full support on $M$, we have $V(x,r)=\mu (B(x, r))>0$
for every $x\in M$ and $r>0$.
The  VD condition \eqref{e:vd} is equivalent to the existence of $d_2>0$ and
${\wt C_\mu}\ge1$ so that
\begin{equation}\label{e:vd2}
\frac{V(x,R)}{V(x,r)} \leq   {\wt C_\mu}  \Big(\frac Rr\Big)^{d_2} \quad \hbox{for all } x\in M \hbox{ and }
0<r\le R.
\end{equation}
 The RVD condition \eqref{e:rvd} is equivalent to the existence of constants $l_\mu >1$ and $\wt c_\mu >1$ so that
 $$
 V (x, l_\mu r)  \geq \wt c_\mu V (x, r) \quad \hbox{for every } x \in M \hbox{ and } r>0.
 $$

It is  known that $\VD$ implies
$\RVD$
if $M$ is connected and unbounded.
In fact, it also holds that if $M$ is connected
and \eqref{e:vd} holds for
all $x\in M$ and $r\in (0,R_0]$
with some $R_0>0$, then
\eqref{e:rvd} holds for
all $x\in M$ and $0<r\le R\le R_0$.
See, for example \cite[Proposition 5.1 and Corollary 5.3]{GH1}.

\medskip

Let $\bR_+:=[0,\infty)$ and $\phi: M\times \bR_+\to \bR_+$ be  a strictly increasing continuous function for every fixed $x\in M$
with $\phi (x, 0)=0$ and $\phi(x, 1)=1$ for all $x\in M$, and satisfying that
\begin{itemize}
\item[(i)] there exist constants $c_1,c_2>0$ and $\beta_2\ge \beta_1>0$ such that
\be\label{pvd1}
 c_1 \Big(\frac Rr\Big)^{\beta_1} \leq
\frac{\phi (x,R)}{\phi (x,r)}  \ \leq \ c_2 \Big(\frac
Rr\Big)^{\beta_2}
\quad \hbox{for all } x\in M\mbox{ and }
0<r \le R;
\ee

\item[(ii)]
there exists a constant $c_3\ge1$ such that
\begin{equation}\label{pvd}\phi(y,r)\le c_3\phi(x,r)\quad \hbox{for all } x,y \in M\mbox{ with }d(x,y)\le r.\end{equation}
\end{itemize}

Recall that
a set $A\subset M$ is said to be nearly Borel measurable
if for any probability measure $\mu_0$ on $M$, there are
Borel measurable subsets $A_1$, $A_2$ of $M$ so that
$A_1\subset A\subset A_2$ and that $\bP^{\mu_0} (X_t\in A_2\setminus A_1
\hbox{ for some } t\geq 0)=0$. The collection of all nearly Borel measurable subsets
of $M$ forms a $\sigma$-field, which is called nearly Borel measurable
$\sigma$-field. A nearly Borel measurable function $u$ on $M$ is said to be \emph{subharmonic}  (resp. \emph{harmonic, superharmonic})
in ${D}$ (with respect to the process $X$)
if for any relatively compact subset $U\subset D$,
$t\mapsto   u (X_{t\wedge \tau_U}) $ is a uniformly integrable submartingale
(resp. martingale, supermartingale) under $\bP^x$ for
$\sE$-q.e. $x\in U$.  Here $\sE$-q.e. stands for $\sE$-quasi-everywhere, meaning it holds outside a set having
zero 1-capacity with respect to the Dirichlet form $(\sE, \sF)$;  see \cite{CF, FOT} for its definition.

For a Borel measurable function $u$ on $M$, we define its
\emph{non-local tail}
$ {\T_\phi} (u; x_0,r)$
 in the ball
$B(x_0,r)$ by
$$
 {\T_\phi} \, (u; x_0,r):=\int_{B(x_0,r)^c}\frac{|u(z)|}{V(x_0,d(x_0,z))\phi(x_0,d(x_0,z))}\,\mu(dz).
$$ We need the following definitions for various forms of elliptic Harnack inequalities.

\begin{definition}\label{thmdefEHIq}
\rm \begin{description}
\item{(i)} We say that \emph{elliptic Harnack inequality} ($\EHI$) holds for the process $X$, if there exist constants $\delta\in(0,1)$ and  $c\ge1$ such that   for every $x_0 \in M $, $r>0$ and for
 every non-negative measurable function $u$ on $M$ that is harmonic in $B(x_0,r)$,
$$
\esssup_{ B(x_0,\delta r)}
u\le c\, \essinf_{ B(x_0,\delta r)}
u.
$$

\item{(ii)} We say that \emph{non-local elliptic Harnack inequality}
$\EHI(\phi)$
holds if there exist constants $\delta\in
(0,1)$ and $c\ge1$ such that for every $x_0 \in M $, $R>0$,  $0<r\le \delta R$, and any measurable function $u$ on $M$ that is non-negative and  harmonic
in $B(x_0,R)$,
$$\esssup_{B(x_0,r)}u\le c\Big(\essinf_{B(x_0,r)}u+\phi(x_0,r){\T_\phi}\, (u_-; x_0,R)\Big).$$

\item{(iii)} We say that \emph{non-local weak elliptic Harnack inequality} $\WEHI(\phi)$ holds if there exist constants $\varepsilon$, $\delta\in
(0,1)$ and $c\ge1$ such that for every $x_0 \in M $, $R>0$,  $0<r\le \delta R$, and any measurable function $u$ on $M$ that is non-negative and  harmonic in $B(x_0,R)$,
$$\left(\frac{1}{\mu(B(x_0,r))}\int_{B(x_0,r)} u^\varepsilon\,d\mu\right)^{1/\varepsilon}\le c\Big(\essinf_{B(x_0,r)}u+\phi(x_0,r){\T_\phi}\, (u_-; x_0,R)\Big).$$

\item{(iv)} We say that \emph{non-local weak elliptic Harnack inequality}
$\WEHI^+(\phi)$
 holds if (iii) holds for any measurable function $u$ on $M$ that is non-negative and  superharmonic in $B(x_0,R)$.
\end{description}
\end{definition}

Clearly, $\EHI(\phi)\Longrightarrow \EHI+ \WEHI(\phi)$, and $\WEHI^+(\phi)\Longrightarrow \WEHI(\phi)$.
We note that unlike the diffusion case, one needs
to assume in the definition of $\EHI$  that the harmonic function $u$ is non-negative on the whole space $M$ because the process
$X$ can jump all over the places,
 as mentioned at the beginning of this  section.

\begin{remark} \rm \begin{itemize}
\item[(i)]
For strongly local Dirichlet forms, $\EHI(\phi)$ is just $\EHI$, and $\WEHI^+(\phi)$ (resp. $\WEHI(\phi)$) is simply reduced into the following:
there exist constants $\varepsilon,\delta\in(0,1)$ and $c\ge1$ such that  for every $x_0 \in M $, $0<r\le \delta R$, and for
 every measurable function $u$ that is non-negative and  superharmonic (resp.\, harmonic) in $B(x_0,R)$,  $$
\left(\frac{1}{\mu(B(x_0,r))}\int_{B(x_0,r)} u^\varepsilon \,d\mu\right)^{1/\varepsilon}\le  c \, \essinf_{B(x_0,r)}u.
$$
 The above inequality is called
weak Harnack inequality for differential operators.
This is why $\WEHI(\phi)$ is called
weak Harnack inequality in
\cite{CKP,CKP2,K1}. However for non-local operators this terminology is a bit misleading as it is not implied by $\EHI$.

\item[(ii)]Non-local (weak) elliptic Harnack inequalities have a term involving the non-local tail of harmonic functions, which are essentially due to the
jumps of the symmetric Markov  processes. This new formulation of Harnack inequalities without requiring the additional positivity on the whole space but adding a non-local tail term first appeared in \cite{K1}. The notion of  non-local tail of measurable function is formally  introduced in \cite{CKP,CKP2}, where non-local (weak) elliptic Harnack inequalities and local behaviors of fractional $p$-Laplacians are investigated.
See \cite{DK} and references therein for the background of $\EHI$ and $\WEHI$.
\end{itemize}
\end{remark}

To state relations among various notions of elliptic Harnack inequalities
and their characterizations, we need a few definitions.

\begin{definition}\rm

\begin{itemize}

\item[{(i)}] We say $\J_{\phi}$ holds if there exists a non-negative symmetric
function $J(x, y)$ so that for $\mu\times \mu $-almost  all $x, y \in M$,
\begin{equation}\label{ae:1.2}
J(dx,dy)=J(x, y)\,\mu(dx)\, \mu (dy),
\end{equation} and
\begin{equation}\label{ajsigm}
 \frac{c_1}{V(x,d(x, y)) \phi(x,d(x, y))}\le J(x, y) \le \frac{c_2}{V(x,d(x, y)) \phi(x,d(x, y))}.
 \end{equation}
We say that $\J_{\phi,\le}$ (resp. $\J_{\phi,\ge}$) holds if \eqref{ae:1.2} holds and the upper bound (resp. lower bound) in \eqref{ajsigm} holds for $J(x, y)$.

\item[{(ii)}]We say that $\IJ_{\phi,\le}$ holds if for $\mu$-almost  all $x \in M$ and any $r>0$,$$
 J(x,B(x,r)^c)\le \frac{c_3}{\phi(x,r)}.$$
 \end{itemize}
\end{definition}

\begin{remark}\rm \label{intelem}Under $\VD$ and \eqref{pvd1},  $\J_{\phi,\le}$ implies $\IJ_{\phi,\le}$, see, e.g., \cite[Lemma 2.1]{CKW1} or \cite[Lemma 2.1]{CK2} for a proof.
 \end{remark}

For the non-local Dirichlet form $(\sE, \sF)$, we
define the carr\'e du-Champ operator $\Gamma (f, g)$
for $f, g\in \sF$ by
$$
\Gamma (f, g) (dx) = \int_{y\in M} (f(x)-f(y))(g(x)-g(y))\,J(d x,d y)  .
$$
Clearly $\sE(f,g)=  \Gamma(f,g) (M)$.
For any $f\in \sF_b:=\sF\cap L^\infty(M,\mu)$, $\Gamma(f,f)$ is the unique
Borel measure (called the \emph{energy measure}) on $M$ satisfying
$$
\int_M g \, d\Gamma(f,f)=\sE(f, fg)-\frac 12\sE(f^2,g),\quad f,g\in \sF_b.
$$
Let $U \subset V$ be open sets in $M$ with
$U \subset \ol U \subset V$.
We say a non-negative bounded measurable function $\vp$ is a {\it cutoff function for $U \subset V$},
if $\vp  = 1$ on $U$,  $\vp=0$ on $V^c$ and $0\leq \vp \leq 1$ on $M$.

\medskip

\begin{definition}\label{csj1} \rm
We say that {\it cutoff Sobolev inequality} $\CSJ(\phi)$ holds if there exist constants $C_0\in (0,1]$ and $C_1, C_2>0$
such that for every
$0<r\le R$, almost all
$x_0 \in M$ and any $f\in \sF$, there exists
a cutoff function $\vp\in \sF_b$ for $B(x_0,R) \subset B(x_0,R+r)$ so that
\begin{equation*} \begin{split}
 \int_{B(x_0,R+(1+C_0)r)} f^2 \, d\Gamma (\vp,\vp)
\le &C_1 \int_{U\times U^*}(f(x)-f(y))^2\,J(dx,dy) \\
&+ \frac{C_2}{\phi(x_0,r)}  \int_{B(x_0,R+(1+C_0)r)} f^2  \, d\mu,
\end{split}
\end{equation*}
where $U=B(x_0,R+r)\setminus B(x_0,R)$ and $U^*=B(x_0,R+(1+C_0)r)\setminus B(x_0,R-C_0r)$.

\end{definition}
$\CSJ(\phi)$ is introduced in \cite{CKW1}, and is used to control
the energy of cutoff functions and to characterize the stability of heat kernel estimates for non-local Dirichlet forms. See \cite[Remark 1.6]{CKW1} for  background on
$\CSJ(\phi)$.

\begin{definition}\label{pi1} {\rm
We say that {\em Poincar\'e inequality}
$\PI(\phi)$
holds if there exist constants $C>0 $ and $\kappa\ge1$ such that
for any  ball $B_r=B(x_0,r)$ with $x_0\in M$ and $r>0$, and for any $f \in \sF_b$,
$$
\int_{B_r} (f-\bar{f}_{B_r})^2\, d\mu \le C \phi(x_0,r)\int_{{B_{\kappa r}}\times {B_{\kappa r}}} (f(y)-f(x))^2\,J(dx,dy),
$$
where $\bar{f}_{B_r}= \frac{1}{\mu({B_r})}\int_{B_r} f\,d\mu$ is the average value of $f$ on ${B_r}$.} \end{definition}

We next introduce the modified Faber-Krahn inequality.
For any open set $D \subset M$, let $\sF_D$
be the $\sE_1$-closure in $\sF$ of  $\sF\cap C_c(D)$,
where $\sE_1 (u, u):= \sE (u, u)+\int_M u^2 \,d\mu $.
Define
$$
 \lam_1(D)
= \inf \left\{ \sE(f,f):  \,  f \in \sF_D \hbox{ with }  \|f\|_2 =1 \right\},
$$
the bottom of the Dirichlet spectrum of $-\sL$ on $D$.

\begin{definition}\label{fk1}
{\rm We say that {\em Faber-Krahn
inequality} $\FK(\phi)$ holds, if there exist positive constants $C$ and
$\nu$ such that for any ball $B(x,r)$ and any open set $D \subset
B(x,r)$,$$
 \lam_1 (D) \ge \frac{C}{\phi(x,r)} (V(x,r)/\mu(D))^{\nu}.
$$
} \end{definition}

For a set
$A\subset M$, define the exit time $\tau_A = \inf\{ t >0 : X_t \in A^c \}$.

\begin{definition}{\rm We say that $\E_\phi$ holds if
there is a constant $c_1>1$ such that for all $r>0$ and  all $x\in M_0$,
$$
c_1^{-1}\phi(x,r)\le \bE^x [ \tau_{B(x,r)} ] \le c_1\phi(x,r).
$$
We say that $\E_{\phi,\le}$ (resp. $\E_{\phi,\ge}$) holds
if the upper bound (resp. lower bound) in the inequality above holds. }
\end{definition}

\begin{definition}\label{thm:defHR}\rm We say
 \emph{elliptic H\"older regularity}
($\EHR$)
holds for the process $X$, if there exist constants $c>0$,
$\theta\in (0, 1]$ and $\eps \in (0, 1)$ such that  for every $x_0 \in M $, $r>0$ and for
 every bounded measurable function $u$ on $M$ that is harmonic in $B(x_0, r)$, there is a properly exceptional set ${\cal N}_u\supset {\cal N}$
so that
$$
|u(x)-u(y)| \leq c   \left( \frac{d(x, y)}{r}\right)^\theta\|u\|_\infty $$
for any $x,$ $y \in B(x_0, \eps r)\setminus {\cal N}_u.$\end{definition}

Here is the main result of this paper.

\begin{theorem} \label{T:ehi-1} Assume that the metric measure space $(M, d , \mu)$ satisfies $\VD$, and $\phi$ satisfies \eqref{pvd1} and \eqref{pvd}.
Then  we have
\begin{itemize}

\item[{\rm (i)}] $\WEHI(\phi) \Longrightarrow \EHR;$ \\
$\WEHI(\phi)+\J_{\phi}+\FK(\phi)+\CSJ(\phi)\Longrightarrow \EHI(\phi)$.

\item[{\rm (ii)}] $\J_{\phi,\le}+\FK(\phi)+\PI(\phi)+\CSJ(\phi)\Longrightarrow \WEHI^+(\phi)$.

\item[{\rm (iii)}]
 $\EHI+\E_{\phi,\le}+\J_{\phi,\le}\Longrightarrow \EHI(\phi)+ \FK(\phi);$\\
  $\EHI+\E_{\phi}+\J_{\phi,\le}\Longrightarrow \PI(\phi).$
\end{itemize}
\end{theorem}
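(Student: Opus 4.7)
The plan is to prove the three parts of Theorem~\ref{T:ehi-1} independently, each reducing to a blend of De~Giorgi/Moser iteration and probabilistic hitting-time arguments adapted to the state-dependent scale function $\phi$. A recurring issue, absent from the setting of \cite{CKW1,CKW2}, is that because $\phi(\cdot,r)$ varies with the base point, scaling and iteration steps must repeatedly invoke \eqref{pvd} to pass between $\phi(x_0,r)$ and $\phi(y,r)$ for $y$ near $x_0$.

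For part~(i), I would first derive $\EHR$ from $\WEHI(\phi)$ by an oscillation-reduction argument: for a bounded $u$ harmonic in $B(x_0,r)$, apply $\WEHI(\phi)$ both to $u-\essinf_{B(x_0,s)}u$ and to $\esssup_{B(x_0,s)}u - u$ (each non-negative and harmonic on $B(x_0,s)$ but not on $M$), and control the tail terms by $\|u\|_\infty$ using the $\VD$ plus \eqref{pvd1} integrability of $1/(V(x_0,\rho)\phi(x_0,\rho))$ for $\rho \ge s$. Iterating along a geometric sequence $s_k = \delta^k r$ yields geometric decay of the essential oscillation, giving $\EHR$. For the second implication, I would run a Bombieri--Giusti argument: $\FK(\phi)+\CSJ(\phi)+\J_\phi$ yields, via Moser iteration as in \cite{CKW1}, an $L^\infty$--$L^\varepsilon$ mean-value bound for non-negative subharmonic functions with an additional non-local tail correction; composing this with $\WEHI(\phi)$ upgrades the infimum-side control into a full $\EHI(\phi)$.

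For part~(ii), I would run a two-sided Moser iteration on a non-negative superharmonic $u$. Testing the Dirichlet form on $(u+\eps)^{1-q}\vp^2$ with a cutoff $\vp$ whose local energy is controlled by $\CSJ(\phi)$ and whose nonlocal contribution is absorbed via $\J_{\phi,\le}$, and applying $\FK(\phi)$ on sublevel sets, one obtains after $\eps\downarrow 0$ an $L^{-q}$--$\essinf$ estimate
\[
\Big(V(x_0,r)^{-1}\int_{B(x_0,r)} u^{-q}\,d\mu\Big)^{-1/q} \le c\Big(\essinf_{B(x_0,r)} u + \phi(x_0,r)\,\T_\phi(u_-;x_0,R)\Big).
\]
To bridge between negative and positive powers, I would establish a John--Nirenberg type BMO bound for $\log(u+\eps)$ by combining $\PI(\phi)$ with the $\CSJ(\phi)$-based energy inequality applied to $\log(u+\eps)$; the exponential integrability of $|\log(u+\eps)-(\log(u+\eps))_B|$ then converts the $L^{-q}$ bound into the $L^{+\varepsilon}$ bound defining $\WEHI^+(\phi)$.

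For part~(iii), I would argue probabilistically. Under $\J_{\phi,\le}+\E_{\phi,\le}$, the expected exit time upper bound yields enough killing on any $D\subset B(x_0,r)$ to give $\lambda_1(D) \gtrsim \phi(x_0,r)^{-1}(V(x_0,r)/\mu(D))^\nu$, which is $\FK(\phi)$; and applying $\EHI$ to the harmonic function $y\mapsto \bP^y(X_{\tau_B}\in A)$ and evaluating the first-jump Poisson kernel via $\J_{\phi,\le}$ provides the tail correction upgrading $\EHI$ to $\EHI(\phi)$. For the second implication, combining the two-sided $\E_\phi$ with $\EHI$ applied to Green functions $G_B(\cdot,y)$ produces matching two-sided estimates of $G_B$ on balls, and the classical Green-function characterisation of Poincar\'e inequalities (cf.\ Saloff-Coste, Sturm) then yields $\PI(\phi)$. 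The main obstacle I anticipate throughout is keeping the state dependence of $\phi$ harmless: each analytic step naturally produces $\phi(y,\cdot)^{-1}$ at a varying base point, which must be converted to $\phi(x_0,\cdot)^{-1}$ via \eqref{pvd} before the iteration can be closed, and the non-local tail terms must be summed uniformly over shrinking scales --- the most delicate instance being part~(ii), where only $\J_{\phi,\le}$ (not a matching lower bound) is available.
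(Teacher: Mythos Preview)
Your sketch for part~(i) is essentially the paper's argument: the $\EHR$ derivation is the Silvestre--Moser oscillation reduction (Theorem~\ref{P:holder} in the paper), and the upgrade to $\EHI(\phi)$ combines the $L^\infty$--$L^q$ mean-value inequality from $\FK+\CSJ+\J_\phi$ (Proposition~\ref{P:mvi2g}, Lemma~\ref{L:cont}) with $\WEHI(\phi)$, exactly as you outline.

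For part~(ii) you take a genuinely different route. The paper follows the De~Giorgi/Di~Benedetto--Trudinger scheme: first a logarithmic Caccioppoli estimate (Lemma~\ref{C:PIandlog}) and a level-set bound (Lemma~\ref{L:EHIone}), then expansion of positivity by a De~Giorgi iteration (Proposition~\ref{P:infsuph}), and finally the Krylov--Safonov covering lemma (Lemma~\ref{L:KS}) combined with a layer-cake computation to reach $\WEHI^+(\phi)$. Your proposal instead runs the Moser iteration on negative powers and bridges to positive powers via a John--Nirenberg/Bombieri--Giusti argument. Both strategies are viable in the non-local setting (the log-BMO control you need is exactly Lemma~\ref{C:PIandlog}), and yours is arguably more classical; the paper's covering-lemma approach has the minor advantage that it avoids iterating negative powers altogether and packages the tail contribution more transparently at each scale.

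Part~(iii) has a real gap in your $\FK(\phi)$ step. You assert that ``under $\J_{\phi,\le}+\E_{\phi,\le}$ the expected exit time upper bound yields enough killing on any $D\subset B(x_0,r)$'' to produce the Faber--Krahn factor $(V(x_0,r)/\mu(D))^\nu$. But $\E_{\phi,\le}$ only controls exit times from \emph{balls}; for a thin set $D$ of small measure there is no direct mechanism in your list of hypotheses that forces $\bE^y\tau_D$ to be correspondingly small. The paper does not obtain $\FK(\phi)$ this way: it first uses $\EHI+\E_{\phi,\le}+\J_{\phi,\le}$ to get $\EHI(\phi)$ (your probabilistic tail argument, Theorem~\ref{P:ehii}), then $\EHI(\phi)\Rightarrow\WEHI(\phi)\Rightarrow\EHR$, and finally invokes $\EHR+\E_{\phi,\le}\Rightarrow\FK(\phi)$ (Proposition~\ref{P:fk}, via \cite[Lemma~4.6]{CKW2}). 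The Hölder regularity is what supplies the oscillation control needed to extract the volume-ratio exponent; without it, $\E_{\phi,\le}+\J_{\phi,\le}$ alone are not known to suffice. Similarly, for $\PI(\phi)$ the paper does not use a Green-function characterisation (the Saloff-Coste/Sturm results you cite are for diffusions and do not transfer directly); instead it passes through a near-diagonal lower bound on the Dirichlet heat kernel obtained from $\EHR+\FK(\phi)+\E_\phi$ (Proposition~\ref{P:PI}), and then derives $\PI(\phi)$ from that lower bound by the standard argument of \cite[Proposition~3.5(i)]{CKW2}. Your Green-function route may be salvageable once $\FK(\phi)$ is in hand, but as written both steps of part~(iii) rely on implications that need $\EHR$ as an intermediate, and that dependence is missing from your sketch.
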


As a direct consequence of Theorem \ref{T:ehi-1}, we have the following statement.

\begin{corollary} \label{C:cor-1} Assume that the metric measure space $(M, d , \mu)$ satisfies $\VD$, and $\phi$ satisfies \eqref{pvd1} and \eqref{pvd}.
If $\J_{\phi}$ and $\E_\phi$ hold, then
$$\FK(\phi)+\PI(\phi)+\CSJ(\phi)\Longleftrightarrow \WEHI^+(\phi)\Longleftrightarrow \WEHI(\phi)\Longleftrightarrow\EHI(\phi)\Longleftrightarrow\EHI.$$
\end{corollary}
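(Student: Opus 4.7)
The plan is to close a cycle of implications among the five conditions, invoking Theorem~\ref{T:ehi-1} for the bulk of the work and establishing $\CSJ(\phi)$ separately from the standing hypotheses. Throughout I use that $\J_\phi$ supplies both $\J_{\phi,\le}$ and $\J_{\phi,\ge}$, and $\E_\phi$ supplies both $\E_{\phi,\le}$ and $\E_{\phi,\ge}$, so any one-sided version that appears in the hypotheses of Theorem~\ref{T:ehi-1} is automatic.

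Two of the arrows are immediate from the definitions. Any harmonic function is superharmonic, so $\WEHI^+(\phi)\Rightarrow\WEHI(\phi)$. If $u\ge 0$ on all of $M$, then $u_-\equiv 0$ and the tail term ${\T_\phi}(u_-;x_0,R)$ vanishes, so the conclusion of $\EHI(\phi)$ collapses to that of $\EHI$; hence $\EHI(\phi)\Rightarrow\EHI$. Three further arrows are direct applications of Theorem~\ref{T:ehi-1}: part~(ii) yields $\FK(\phi)+\PI(\phi)+\CSJ(\phi)\Rightarrow\WEHI^+(\phi)$; the second assertion of part~(i) yields $\WEHI(\phi)\Rightarrow\EHI(\phi)$ once $\FK(\phi)$ and $\CSJ(\phi)$ are also in hand; and the two assertions of part~(iii) together yield $\EHI\Rightarrow\EHI(\phi)+\FK(\phi)+\PI(\phi)$. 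Stringing these five arrows closes the cycle as soon as $\CSJ(\phi)$ is available under the standing hypotheses.

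The main obstacle is therefore to produce $\CSJ(\phi)$ from $\VD$, \eqref{pvd1}, \eqref{pvd}, $\J_\phi$ and $\E_\phi$ alone, with no Harnack-type input. My approach is the standard exit-time construction of cutoff functions: use $\E_\phi$ to build a function $\vp\in\sF_b$ which is a cutoff for $B(x_0,R)\subset B(x_0,R+r)$ and whose energy measure on the annulus $U=B(x_0,R+r)\setminus B(x_0,R)$ has total mass of order $\phi(x_0,r)^{-1}\mu(U^*)$, and then dominate the nonlocal contribution across $U\times U^*$ by the upper jump bound from $\J_\phi$. This is the construction carried out in \cite{CKW1} in the state-independent setting, and the passage to the state-dependent $\phi$ here is routine thanks to \eqref{pvd1} and \eqref{pvd}, which make $\phi(y,r)$ comparable to $\phi(x_0,r)$ on balls of radius comparable to $r$. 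Once $\CSJ(\phi)$ is verified, the cycle closes and the five properties are equivalent.
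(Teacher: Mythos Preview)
Your overall strategy matches the paper's, and your treatment of $\CSJ(\phi)$ is exactly what the paper does (Proposition~\ref{P:CSJ}): $\E_\phi+\J_{\phi,\le}\Rightarrow\CSJ(\phi)$ via the exit-time cutoff construction from \cite{CKW1}. However, there is a genuine gap in the arrow $\WEHI(\phi)\Rightarrow\EHI(\phi)$.

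You correctly note that the second assertion of Theorem~\ref{T:ehi-1}(i) requires $\FK(\phi)$ and $\CSJ(\phi)$ as additional inputs. You then claim the cycle closes ``as soon as $\CSJ(\phi)$ is available under the standing hypotheses,'' but this overlooks $\FK(\phi)$: it is \emph{not} a consequence of $\VD$, \eqref{pvd1}, \eqref{pvd}, $\J_\phi$ and $\E_\phi$ alone. The exit-time bound from $\E_{\phi,\le}$ gives only $\lambda_1(B(x,r))\ge c/\phi(x,r)$, which is far weaker than the Faber--Krahn inequality with the factor $(V(x,r)/\mu(D))^\nu$ for arbitrary open $D\subset B(x,r)$. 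So when you are standing at $\WEHI(\phi)$ in the cycle, you do not yet have $\FK(\phi)$ in hand, and you cannot invoke Theorem~\ref{T:ehi-1}(i) to pass to $\EHI(\phi)$.

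The paper fills this gap by first using the \emph{first} assertion of Theorem~\ref{T:ehi-1}(i), namely $\WEHI(\phi)\Rightarrow\EHR$, and then appealing to Proposition~\ref{P:fk}(i), which gives $\EHR+\E_{\phi,\le}\Rightarrow\FK(\phi)$. With $\FK(\phi)$ thus recovered from $\WEHI(\phi)$ and the standing hypotheses, and with $\CSJ(\phi)$ from Proposition~\ref{P:CSJ}, the second assertion of Theorem~\ref{T:ehi-1}(i) then legitimately yields $\WEHI(\phi)\Rightarrow\EHI(\phi)$. Inserting this one extra step (via $\EHR$) repairs your argument completely.
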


\begin{proof}
It follows from
Theorem \ref{T:ehi-1}(ii) that, if $\J_{\phi,\le}$ holds, then
$$\FK(\phi)+\PI(\phi)+\CSJ(\phi)\Longrightarrow \WEHI^+(\phi)\Longrightarrow \WEHI(\phi).$$ By Proposition \ref{P:fk}(i) below,
$\EHR+\E_{\phi,\le}\Longrightarrow \FK(\phi)$. On the other hand, according to Proposition \ref{P:CSJ} below, we have $\E_\phi+\J_{\phi, \leq}
\Longrightarrow\CSJ(\phi).$ Combining those with Theorem \ref{T:ehi-1}(i), we can obtain that under $\J_{\phi}$ and $\E_\phi$,
$$ \WEHI(\phi)\Longrightarrow\EHI(\phi)\Longrightarrow\EHI.$$
Furthermore,  by
Theorem \ref{T:ehi-1}(iii), if $\J_{\phi, \leq}$
and $\E_\phi$ are satisfied, then $\EHI\Longrightarrow\FK(\phi)+\PI(\phi).$
As mentioned above, Proposition \ref{P:CSJ} below shows that $\E_\phi+\J_{\phi, \leq}
\Longrightarrow\CSJ(\phi).$ Thus, if $\J_{\phi,\le}$ and $\E_\phi$ are satisfied, then
$$\EHI\Longrightarrow \FK(\phi)+\PI(\phi)+\CSJ(\phi).$$ The proof is complete. \qed\end{proof}

\subsection{Stability of elliptic Harnack inequalities} \label{S:1.2}

In this subsection, we study the stability of $\EHI$ under some additional assumptions. We mainly follow the framework of \cite{B}.
For open subsets $A$ and $B$ of $M$ with
$A\Subset B$ (that is, $A\subset \overline A\subset B$),
define the relative capacity
\[
 \mbox{{\rm Cap}} (A,B)=\inf \left\{\sE(u,u): u\in \sF, \, u=1
 \hbox{ $\sE$-q.e. on } A
 \hbox{ and } u=0
  \hbox{ $\sE$-q.e. on } B^c \right\}.
\]
For each $x\in M$ and $r>0$, define
$$
\Ex(x,r)=V(x,r)/\Ca(B(x,r),B(x,2r)).
$$

Our main assumptions are as follows.

\begin{assumption}\label{A1}
\begin{itemize}
\item [\rm (i)] $(M,\mu)$ satisfies $\VD$ and $\RVD$.

\item [\rm (ii)]
There is a constant $c_1>0$ such that for all $x, y\in M$ with $d(x,y)\le r$,
\[
\Ca(B(y,r),B(y,2r))\le  c_{ 1}\Ca(B(x,r), B(x,2r)).
 \]

\item[\rm (iii)]
For any $a\in (0,1]$, there exists a constant $c_2:=c_{2,a}>0$ such that for all $x\in M$ and $r>0$,
\[
\Ca(B(x,r),B(x,2r))\le c_2\,\Ca(B(x,ar), B(x,2r)).
\]

\item[\rm (iv)] There exist constants $c_3,c_4>0$ and $\beta_2\ge \beta_1>0$ such that for all $x\in M$ and $0<r\le R$,
\begin{equation}\label{vext}
 c_3 \Big(\frac Rr\Big)^{\beta_1} \leq
\frac{\Ex(x,R)}{\Ex(x,r)}  \ \leq \ c_4 \Big(\frac
Rr\Big)^{\beta_2}.
\end{equation}
\end{itemize}
\end{assumption}

 \medskip

\begin{assumption}\label{A1-b}
For any bounded, non-empty open set $D\subset M$, there exist a properly exceptional set $\sN_D\supset \sN$ and  a non-negative
measurable
 function $G_D(x,y)$  defined for $x,y\in D\setminus \sN_D$ such that
 \begin{description}
 \item{\rm (i)} $G_D(x,y)=G_D(y,x)$ for all $(x,y)\in (D\setminus \sN_D) \times (D\setminus \sN_D)
\setminus {\rm diag}$.

\item{\rm (ii)}  for every fixed $y\in D\setminus \sN_D$, the function $x\mapsto G_D(x,y)$ is harmonic in
$ (D\setminus \sN_D)\setminus\{y\} $.

\item{\rm (iii)} for every measurable $f\geq 0$ on $D$,
\[
\bE^x \left[ \int_0^{\tau_D}  f(X_t) \,dt \right]=
\int_DG_D(x,y)f(y)\,\mu(dy),
\quad x\in D\setminus \sN_D.
\]
\end{description}

\end{assumption}

The  function $G_D(x, y)$ satisfying (i)--(iii) of Assumption \ref{A1-b}  is called the Green function of $X$ in $D$.

\begin{remark}\label{R:1.3}
\rm \begin{description} \item {\rm (i)}
We will see from Lemmas \ref{L:ug} and \ref{L:lg} below that, under suitable conditions,
the quantity $\Ex (x, r)$ defined above is related to the mean exit time
from the ball $B(x, r)$ by the process $X$.
Hence, under the conditions, $\Ex(x,r)$ plays the same role of the scaling function $\phi(x,r)$ in the previous subsection.

\item{\rm (ii)}
From $\VD$ and Assumption \ref{A1} (ii), (iii) and (iv), we can deduce that for every
$a\in(0,1]$ and $L>0$, there exists a constant $c_5:=c_{a,L,5}\ge1$
such that the following holds
for all $x, y\in M$ with $d(x,y)\le r$,
\be\label{eq:wondf1-1}
c_5^{-1}\Ca(B(y,aLr),B(y,2Lr))\le  \Ca(B(x,r), B(x,2r)) \le c_{ 5} \Ca(B(y,aLr),B(y,2Lr)).
\ee

\item{\rm (iii)} Assumption \ref{A1} is the same as  \cite[Assumption 1.6]{BM} except that
 in their paper the corresponding conditions
 are assumed to hold for $r\in (0, R_0]$ and for $0<r\le R\leq R_0$
with some $R_0>0$. These conditions are called
bounded geometry condition in \cite{BM}.
However the setting of \cite{BM} is for strongly local Dirichlet forms with underlying
state space $M$ being
geodesic. Under these settings and
the  bounded geometry condition, it is shown  in \cite{BM}
  that there exists an equivalent
doubling measure $\wt \mu$ on
$M$ so that Assumption \ref{A1}
holds (i.e., the bounded geometry condition holds globally
in large scale as well).
 Since harmonicity is invariant under
 time-changes by strictly increasing continuous additive functionals,
this enables them to substantially extend  the stability result of elliptic Harnack inequality of Bass \cite{B} for diffusions,
 which was essentially established under
the global bounded geometry condition.   However the continuity of the processes (i.e.\ diffusions) and the
geodesic property of the underlying state space played
a crucial role in \cite{BM}.
It is unclear at this stage whether
Assumption \ref{A1} can be replaced by a bounded geometry condition
 for  non-local Dirichlet forms on general metric measure spaces.
\end{description}
\end{remark}

The following result gives a  stable characterization of $\EHI$.

\begin{theorem}\label{C:1.12}
Under Assumptions $\ref{A1}$ and $\ref{A1-b}$,
 if $\J_{\Ex}$ holds, then
$$
\FK(\Ex)+\PI(\Ex)+\CSJ(\Ex)\Longleftrightarrow \WEHI^+(\Ex) \Longleftrightarrow\WEHI(\Ex) \Longleftrightarrow  \EHI(\Ex) \Longleftrightarrow \EHI,
$$ where $\J_\Ex$ is $\J_\phi$ with $\Ex(x,r)$ replacing $\phi(x,r)$, and same for other notions.
\end{theorem}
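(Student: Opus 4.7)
The plan is to reduce Theorem~\ref{C:1.12} to Corollary~\ref{C:cor-1}, applied with the scaling function $\phi(x,r):=\Ex(x,r)$. I will verify that, under the stated hypotheses, $\Ex$ plays precisely the role of $\phi$ demanded by Corollary~\ref{C:cor-1}: namely, (a) $\Ex$ obeys the doubling-type conditions~\eqref{pvd1} and~\eqref{pvd}, and (b) the two-sided mean exit time estimate $\E_\Ex$ holds. Combined with Assumption~\ref{A1}(i) (which supplies $\VD$) and the assumed $\J_\Ex$, these are exactly the hypotheses Corollary~\ref{C:cor-1} needs to output the full chain of equivalences.

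Verification of (a) is essentially bookkeeping. Condition~\eqref{pvd1} for $\Ex$ is literally the content of Assumption~\ref{A1}(iv). For~\eqref{pvd}, fix $x,y\in M$ with $d(x,y)\le r$. Since $B(y,r)\subset B(x,2r)$, $\VD$ gives $V(y,r)\le C_\mu V(x,r)$, and Assumption~\ref{A1}(ii), applied by exchanging $x$ and $y$ (permissible since $d(y,x)=d(x,y)\le r$), gives $\Ca(B(x,r),B(x,2r))\le c_1\,\Ca(B(y,r),B(y,2r))$. Dividing yields $\Ex(y,r)\le c\,\Ex(x,r)$, which is~\eqref{pvd}.

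The substantive step is (b), namely the assertion that $\bE^x[\tau_{B(x,r)}]$ is comparable to $V(x,r)/\Ca(B(x,r),B(x,2r))$. The Green function granted by Assumption~\ref{A1-b} expresses the mean exit time as $\int G_{B(x,r)}(x,y)\,\mu(dy)$, so the task reduces to producing two-sided bounds on $G_{B(x,r)}(x,\cdot)$ in terms of the condenser capacity. The upper bound will be produced by testing the Dirichlet form against the equilibrium potential of $(B(x,r),B(x,2r))$ and invoking harmonicity of the Green function off the pole together with a maximum principle; the lower bound uses $\RVD$ and Assumption~\ref{A1}(iii) to locate a set of $\mu$-measure comparable to $V(x,r)$ on which $G_{B(x,r)}(x,\cdot)\ge c/\Ca(B(x,r),B(x,2r))$. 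These are precisely the contents of the forthcoming Lemmas~\ref{L:ug} and~\ref{L:lg}, as previewed in Remark~\ref{R:1.3}(i).

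Once $\E_\Ex$ is in hand, Corollary~\ref{C:cor-1} applied with $\phi=\Ex$ delivers all four equivalences in a single stroke. I expect step~(b) to be the main obstacle: producing sharp, two-sided capacity/exit-time estimates in the present generality, without a priori heat kernel information and without assuming $(M,d)$ to be geodesic or of bounded geometry, is precisely what forces parts (ii)--(iv) of Assumption~\ref{A1} (which give spatial comparability and rescaling of capacities around individual points) as well as Assumption~\ref{A1-b} (which makes the Green function available on every bounded open set). Everything else is a clean repackaging of the earlier theorems.
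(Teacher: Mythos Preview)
The gap is in step~(b). You assert that $\E_\Ex$ can be verified under Assumptions~\ref{A1}, \ref{A1-b} and $\J_\Ex$ alone, but this is not what Lemmas~\ref{L:ug} and~\ref{L:lg} establish: both take $\EHI$ as an explicit hypothesis, and their proofs work by applying $\EHI$ to the Green function $G_D(x_0,\cdot)$ in order to compare $G_D(x_0,y)$ with the relative capacity. Your sketch for the upper bound (``testing the Dirichlet form against the equilibrium potential \dots\ together with a maximum principle'') is not how the paper argues, and in the present non-local setting there is no evident maximum-principle route that bypasses $\EHI$. Consequently $\E_\Ex$ is \emph{not} a preliminary consequence of the standing hypotheses, and you cannot simply feed $\phi=\Ex$ into Corollary~\ref{C:cor-1} in one stroke.

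The paper's argument is organized around exactly this obstruction. It first observes that the forward implication $\FK(\Ex)+\PI(\Ex)+\CSJ(\Ex)\Longrightarrow\EHI(\Ex)$ (hence $\EHI$) needs only $\J_\Ex$, not $\E_\Ex$, by Theorem~\ref{T:ehi-1}(i)--(ii). Only once $\EHI$ is available does one invoke Lemmas~\ref{L:ug}--\ref{L:lg} (together with $\J_{\Ex,\le}\Rightarrow\IJ_{\Ex,\le}$ via Remark~\ref{intelem}) to obtain $\E_\Ex$; with $\E_\Ex$ now in hand, the remaining implications from Corollary~\ref{C:cor-1} close the cycle back to $\FK+\PI+\CSJ$. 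In other words, $\E_\Ex$ is bootstrapped from $\EHI$ inside the equivalence chain rather than checked up front, and your plan misses this two-stage structure.
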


\subsection{Parabolic Harnack inequalities }\label{S:1.3}
As consequences of the main result of this paper, Theorem \ref{T:ehi-1} and the stability result of parabolic Harnack inequality in \cite[Theorem 1.17]{CKW2}, we will present in this subsection
new equivalent characterizations of parabolic Hanack inequality
in terms of elliptic Harnack inequalities.
In this subsection, we always assume that, for each $x\in M$ there is a kernel $J(x, dy)$ so that
$$
J(dx, dy) = J(x, dy)\, \mu (dy).
$$
We aim to present some equivalent conditions for parabolic Harnack inequalities in terms of elliptic Harnack inequalities, which
can be viewed as a complement to \cite{CKW2}.
We restrict ourselves to the case that the (scale) function $\phi$ is independent of $x$, i.e. in
this subsection,  $\phi: \bR_+\to \bR_+$ is a strictly increasing continuous
function  with $\phi (0)=0$, $\phi(1)=1$
such that there exist constants $c_3,c_4>0$ and $\beta_2\ge \beta_1>0$ so that
\be\label{pvdx}
 c_3 \Big(\frac Rr\Big)^{\beta_1} \leq
\frac{\phi (R)}{\phi (r)}  \ \leq \ c_4 \Big(\frac
Rr\Big)^{\beta_2}
\quad \hbox{for all }
0<r \le R.
\ee

We first give the probabilistic definition of parabolic
functions in the general context of metric measure spaces. Let $Z:=\{V_s,X_s\}_{s\ge0}$ be the space-time process corresponding to $X$ where $V_s=V_0-s$
for all $s\ge0$.
The filtration generated by $Z$ satisfying the usual conditions will be denoted by $\{\widetilde{\mathcal{F}}_s;s\ge0\}$. The law of the space-time process $s\mapsto Z_s$ starting from $(t,x)$ will be denoted by $\bP^{(t,x)}$. For every open subset $D$ of $[0,\infty)\times M$, define
$\tau_D=\inf\{s>0:Z_s\notin D\}.$ We say that a
nearly  Borel measurable function $u(t,x)$ on
$[0,\infty)\times M$ is \emph{parabolic} (or \emph{caloric}) in
$D=(a,b)\times B(x_0,r)$ for the process $X$ if there is a properly
exceptional set $\mathcal{N}_u$ of the process $X$ so that for every
relatively compact open subset $U$ of $D$,
$u(t,x)=\bE^{(t,x)}u(Z_{\tau_{U}})$ for every $(t,x)\in
U\cap([0,\infty)\times (M\backslash \mathcal{N}_u)).$

We next give definitions of parabolic Harnack inequality and
H\"older regularity for parabolic functions.

\begin{definition}\label{thm:defPHI} \rm \begin{description}
\item{(i)}
We say that \emph{parabolic Harnack inequality} $\PHI(\phi)$ holds for the process $X$, if there exist constants $0<C_1<C_2<C_3<C_4$,  $C_5>1$ and  $C_6>0$ such that for every $x_0 \in M $, $t_0\ge 0$, $R>0$ and for
every non-negative function $u=u(t,x)$ on $[0,\infty)\times M$ that is parabolic in
cylinder $Q(t_0, x_0,C_4\phi(R),C_5R):=(t_0, t_0+C_4\phi(R))\times B(x_0,C_5R)$,
$$
  \esssup_{Q_- }u\le C_6 \,\essinf_{Q_+}u,
$$ where $Q_-:=(t_0+C_1\phi(R),t_0+C_2\phi(R))\times B(x_0,R)$ and $Q_+:=(t_0+C_3\phi(R), t_0+C_4\phi(R))\times B(x_0,R)$.

\item{(ii)} We say
\emph{parabolic H\"older regularity}
$\PHR(\phi)$
holds for the process $X$, if there exist  constants $c>0$,
$\theta\in (0, 1]$ and $\eps \in (0, 1)$ such that for every $x_0 \in M $, $t_0\ge0$, $r>0$ and for
 every bounded measurable function $u=u(t,x)$ that is parabolic in
$Q(t_0,x_0,\phi(r), r)$,    there is a properly exceptional set ${\cal N}_u\supset {\cal N}$ so that
$$
|u(s,x)-u(t, y)|\le c\left( \frac{\phi^{-1}(|s-t|)+d(x, y)}{r} \right)^\theta  \esssup_{ [t_0, t_0+\phi (r)] \times M}|u|
$$
 for  every  $s, t\in (t_0, t_0+\phi (\eps r))$ and $x, y \in B(x_0, \eps r)\setminus {\cal N}_u$.
\end{description}
\end{definition}

\medskip

\begin{definition}\label{thm:defUJS} {\rm
We say that $\UJS$ holds if there is a symmetric function $J(x, y)$ so that
$J(x, dy)=J(x, y)\,\mu (dy)$, and there is a constant $c>0$ such that for $\mu$-a.e. $x, y\in M$ with $x\not= y$,
$$
J(x,y)\le  \frac{c}{V(x,r)}\int_{B(x,r)}J(z,y)\,\mu(dz)
\quad\hbox{for every }
0<r\le  \frac{1}{2} d(x,y).
$$}
\end{definition}
We define $\EHR$, $\E_\phi$, $\E_{\phi,\le}$, $\J_{\phi,\le}$, $\PI(\phi)$ and $\CSJ(\phi)$
similarly
as in previous subsections but with $\phi (r)$ in place of $\phi (x, r)$.
The following stability result of $\PHI (\phi)$ is recently established in \cite{CKW2}.

\begin{theorem}\label{T:PHI}{\bf (\cite[Theorem 1.17]{CKW2})}\,\,
Suppose that the metric measure space  $(M, d,  \mu)$ satisfies $\VD$ and $\RVD$, and $\phi$ satisfies \eqref{pvdx}.
Then the following are equivalent:
\begin{itemize}
\item[{\rm (i)}] $\PHI(\phi)$.
\item[{\rm (ii)}]
 $\PHR(\phi) + \E_{\phi,\le} + \UJS$.
 \item[{\rm (iii)}] $\EHR + \E_\phi + \UJS$.
\item[{\rm (iv)}] $\J_{\phi,\le} + \PI(\phi) + \CSJ(\phi) +\UJS$.
\end{itemize}
\end{theorem}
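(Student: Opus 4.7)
The plan is to prove the four equivalences via the cycle (i) $\Rightarrow$ (ii) $\Rightarrow$ (iii) $\Rightarrow$ (iv) $\Rightarrow$ (i), leveraging Theorem \ref{T:ehi-1} (more precisely its $x$-independent specialization) together with standard techniques from the theory of non-local Dirichlet forms. The four conditions split into a ``potential-theoretic'' group (i)--(iii) and a ``functional-analytic'' group (iv); the bridge between the two groups is exactly where Theorem \ref{T:ehi-1} is used. Throughout, the hypotheses $\VD+\RVD$ and \eqref{pvdx} allow one to freely apply covering and chaining arguments.

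For (i) $\Rightarrow$ (ii), the parabolic oscillation lemma applied iteratively to $\esssup u - u$ and $u - \essinf u$ on nested space-time cylinders yields $\PHR(\phi)$ in the usual Moser/Nash fashion. To extract $\E_{\phi,\le}$, apply $\PHI(\phi)$ to the space-time caloric function $u(t,x)=\bP^x(\tau_{B(x_0,R)}>t)$ together with the trivial bound $u\le 1$, obtaining geometric decay of the survival probability and hence $\bE^x \tau_{B(x_0,R)}\lesssim \phi(R)$. For $\UJS$, apply $\PHI(\phi)$ to the caloric function $(t,x)\mapsto \bP^x(X_t\in A)$ for a small set $A$ near $y$, and translate the resulting lower bound into the required averaged inequality on $J(x,y)$ via the L\'evy system formula.

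For (ii) $\Rightarrow$ (iii), $\PHR(\phi)\Rightarrow \EHR$ is immediate since time-independent harmonic functions are caloric. Upgrading $\E_{\phi,\le}$ to $\E_\phi$ requires producing a near-diagonal lower bound on the transition kernel; this is manufactured from $\PHR(\phi)+\UJS$ by a Nash-type argument, using $\UJS$ to propagate mass along long jumps and $\PHR(\phi)$ to transfer pointwise information across small space-time balls. For (iii) $\Rightarrow$ (iv), first derive $\J_{\phi,\le}$ from $\E_{\phi,\le}+\UJS$ via the L\'evy system identity applied to small balls; once $\J_{\phi,\le}$ and $\E_\phi$ are in hand, Theorem \ref{T:ehi-1}(iii) turns $\EHR$ (which upgrades to $\EHI$ under these conditions, as in Corollary \ref{C:cor-1}) into $\PI(\phi)+\FK(\phi)$, while Proposition \ref{P:CSJ} delivers $\CSJ(\phi)$. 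Finally, for (iv) $\Rightarrow$ (i), invoke the stability of upper heat kernel estimates from \cite{CKW1} (driven by $\J_{\phi,\le}+\CSJ(\phi)+\FK(\phi)$), combine with $\UJS$ to obtain a matching near-diagonal lower bound, and convert the resulting two-sided heat kernel estimate into $\PHI(\phi)$ by the standard chaining argument of \cite{CKW2}.

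The main obstacle is (iii) $\Rightarrow$ (iv), more precisely the step that extracts the pointwise upper bound $\J_{\phi,\le}$ from the much weaker averaged condition $\UJS$ together with the purely qualitative $\EHR$ and the two-sided exit-time bound $\E_\phi$. This requires a careful self-improvement: one uses $\UJS$ to pass from an averaged to a pointwise estimate, the exit distribution decomposition to relate $J(x,y)$ to the probability of a direct jump from $x$ into a small ball around $y$, and $\E_\phi$ to quantify the time spent near $x$. A secondary difficulty is (ii) $\Rightarrow$ (iii), where $\E_{\phi,\ge}$ must be bootstrapped out of purely ``upper'' parabolic information, again using $\UJS$ as the mechanism that turns one-sided regularity into two-sided control.
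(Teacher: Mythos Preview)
This theorem is not proved in the present paper at all: it is quoted verbatim from \cite[Theorem 1.17]{CKW2} and used as a black box (see the sentence ``The following stability result of $\PHI(\phi)$ is recently established in \cite{CKW2}'' immediately preceding the statement). So there is no ``paper's own proof'' to compare against here; your proposal is an attempt to reconstruct the proof of a cited external result.

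That said, your sketch of (iii) $\Rightarrow$ (iv) contains two genuine problems. First, you claim to extract the pointwise bound $\J_{\phi,\le}$ directly from $\E_{\phi,\le}+\UJS$ ``via the L\'evy system identity applied to small balls''. This does not work as stated: $\UJS$ is only an \emph{averaged} upper bound on $J$, and $\E_{\phi,\le}$ controls exit times, not the jump kernel; neither yields a pointwise majorant of $J(x,y)$ without first passing through heat kernel upper bounds. In \cite{CKW2} the actual route is $\EHR+\E_{\phi,\le}\Rightarrow \FK(\phi)$ (cf.\ Proposition \ref{P:fk}(i) here), then $\FK(\phi)$ plus exit-time control gives off-diagonal upper heat kernel estimates, and $\J_{\phi,\le}$ is read off from those. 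Second, your appeal to Corollary \ref{C:cor-1} to ``upgrade $\EHR$ to $\EHI$'' is invalid: that corollary assumes the \emph{two-sided} $\J_\phi$, whereas at this stage you have at best $\J_{\phi,\le}$. This detour is in any case unnecessary, since Propositions \ref{P:fk}(i) and \ref{P:PI}(i) deliver $\FK(\phi)$ and $\PI(\phi)$ directly from $\EHR+\E_\phi$ without ever invoking $\EHI$.
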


As a consequence of Theorems \ref{T:ehi-1} and \ref{T:PHI}, we have the following statement for the equivalence of $\PHI(\phi)$ in terms of $\EHI$.

\begin{theorem}\label{T:ehi} Suppose that the metric measure space  $(M, d,  \mu)$ satisfies $\VD$ and $\RVD$, and $\phi$ satisfies \eqref{pvdx}. Then
the following are equivalent:
\begin{itemize}
\item[{\rm (i)}] $\PHI(\phi)$.
\item[{\rm (ii)}] $\WEHI^+(\phi)+ \E_\phi+\UJS.$
\item[{\rm (iii)}] $\WEHI(\phi)+ \E_\phi+\UJS.$
\item[{\rm (iv)}] $\EHI(\phi)+ \E_\phi+\UJS.$
\item[{\rm (v)}] $\EHI+ \E_\phi+\UJS+\J_{\phi,\le}.$
\end{itemize}
 \end{theorem}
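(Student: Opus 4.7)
The plan is to establish the five‑way equivalence by closing the cycle
\[
(i) \Longrightarrow (ii) \Longrightarrow (iii) \Longrightarrow (i), \qquad (i) \Longrightarrow (v) \Longrightarrow (iv) \Longrightarrow (iii),
\]
using Theorem \ref{T:PHI} as the bridge between the parabolic world and the elliptic world, and Theorem \ref{T:ehi-1} (together with the Propositions \ref{P:CSJ} and \ref{P:fk} already invoked in the proof of Corollary \ref{C:cor-1}) to move among the various elliptic Harnack inequalities. No genuinely new analysis should be required; the result is a structural consequence of the two heavy tools already in hand.

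For (i) $\Longrightarrow$ (ii) I would extract from $\PHI(\phi)$, via the equivalences (i) $\Leftrightarrow$ (iii) $\Leftrightarrow$ (iv) of Theorem \ref{T:PHI}, the full package $\EHR + \E_\phi + \UJS + \J_{\phi,\le} + \PI(\phi) + \CSJ(\phi)$. Proposition \ref{P:fk}(i) then upgrades $\EHR + \E_{\phi,\le}$ to $\FK(\phi)$, and Theorem \ref{T:ehi-1}(ii) finally delivers $\WEHI^+(\phi)$. Step (ii) $\Longrightarrow$ (iii) is the tautology $\WEHI^+(\phi) \Longrightarrow \WEHI(\phi)$, since every harmonic function is superharmonic. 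The decisive step is (iii) $\Longrightarrow$ (i): Theorem \ref{T:ehi-1}(i) gives $\WEHI(\phi) \Longrightarrow \EHR$, after which Theorem \ref{T:PHI}(iii) produces $\PHI(\phi)$ from the package $\EHR + \E_\phi + \UJS$.

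For the second arc, (i) $\Longrightarrow$ (v) combines three facts: $\PHI(\phi)$ delivers $\E_\phi + \UJS + \J_{\phi,\le}$ via Theorem \ref{T:PHI}, while the time‑independent substitution $u(t,x) = h(x)$ in Definition \ref{thm:defPHI} turns $\PHI(\phi)$ directly into $\EHI$ (a non-negative harmonic $h$ on $M$ becomes a non-negative parabolic function on $[0,\infty)\times M$, and the cylinders in $\PHI(\phi)$ collapse to spatial balls). Step (v) $\Longrightarrow$ (iv) is immediate from Theorem \ref{T:ehi-1}(iii), which converts $\EHI + \E_{\phi,\le} + \J_{\phi,\le}$ into $\EHI(\phi)$; the hypotheses $\E_\phi$ and $\UJS$ are simply carried along. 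Finally (iv) $\Longrightarrow$ (iii) is the remark recorded right after Definition \ref{thmdefEHIq} that $\EHI(\phi) \Longrightarrow \EHI + \WEHI(\phi)$.

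The only nontrivial conceptual ingredient is the implication $\WEHI(\phi) \Longrightarrow \EHR$ absorbed into Theorem \ref{T:ehi-1}(i); once that is invoked, the rest is a short diagram‑chase. The main risk I anticipate in writing this out cleanly is matching the scaling/geometric constants when transferring between the parabolic cylinder formulation of $\PHI(\phi)$ and the elliptic balls in the various $\EHI$‑type statements, but this is routine rescaling rather than a conceptual obstacle.
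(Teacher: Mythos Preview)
Your proposal is correct and follows essentially the same route as the paper: the same cycle $(i)\Rightarrow(ii)\Rightarrow(iii)\Rightarrow(i)$ and $(i)\Rightarrow(v)\Rightarrow(iv)\Rightarrow(iii)$, with Theorem~\ref{T:PHI} supplying the package $\EHR+\E_\phi+\UJS+\J_{\phi,\le}+\PI(\phi)+\CSJ(\phi)$ and Theorem~\ref{T:ehi-1} handling the passage among the elliptic inequalities. The only difference is that you make explicit two steps the paper leaves implicit---invoking Proposition~\ref{P:fk}(i) to obtain $\FK(\phi)$ before applying Theorem~\ref{T:ehi-1}(ii), and spelling out that $\PHI(\phi)\Rightarrow\EHI$ via the substitution $u(t,x)=h(x)$---which if anything improves readability.
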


\begin{proof} As  indicated in Theorem \ref{T:PHI}, under $\VD$, $\RVD$ and \eqref{pvdx}, $$ \PHI(\phi)\Longleftrightarrow\J_{\phi,\le}+\PI(\phi)+
\CSJ(\phi)+\UJS\Longrightarrow \E_\phi.$$  Then, by Theorem \ref{T:ehi-1}(ii), (i) $\Longrightarrow$ (ii). (ii) $\Longrightarrow$ (iii) is clear. (iii) $\Longrightarrow$ (i) follows from Theorem \ref{T:ehi-1}(i) and Theorem \ref{T:PHI}(iii).

Obviously, (i) $\Longrightarrow$ (v) is a consequence of Theorem \ref{T:PHI} (i), (iii) and (iv).  (v) $\Longrightarrow$ (iv) follows from Theorem \ref{T:ehi-1}(iii). (iv) $\Longrightarrow$ (iii) is trivial. This completes the proof.
\qed\end{proof}

\bigskip

The remainder of this paper is mainly concerned with the proof of
Theorem \ref{T:ehi-1}, the main result of this paper. It is organized as follows.
The proofs of  Theorem \ref{T:ehi-1}(i), (ii) and (iii) are given in the next three sections, respectively.
In Section \ref{S:5}, we study the relations between the mean of exit time and
relative capacity.
In particular, the proof of Theorem \ref{C:1.12} is given there.
Finally,   a class of symmetric jump processes of variable orders on $\bR^d$ with state-dependent scaling functions are given in Section \ref{S:6},
for which we apply  the main results of this paper to show that all the elliptic Harnack inequalities hold for these processes.

\medskip

In this paper, we use ``:="' as a way of definition.
For two functions $f$ and $g$, notation $f\asymp g$ means that there is a constant $c\geq 1$ so that
$g/c \leq f \leq c g$.

\section{Elliptic Harnack inequalities and H\"older regularity}
\label{S:2}

In this section, we assume that $\mu$ and $\phi$ satisfy $\VD$, \eqref{pvd1} and \eqref{pvd}, respectively.
We will prove that $\WEHI(\phi)$ implies a priori H\"older regularity for harmonic functions,
and study
the relation between $\WEHI(\phi)$ and $\EHI (\phi)$.

\subsection{$\WEHI(\phi)\Longrightarrow \EHR$}\label{S:2.1}

In this part, we will show that the weak elliptic Harnack inequality implies regularity
estimates of harmonic functions in H\"{o}lder spaces. We mainly follow the strategy of \cite[Theorem 1.4]{DK}, part of which is originally due to \cite{Mos,Sil}.

\begin{theorem}\label{P:holder}
Suppose that $\VD$, \eqref{pvd1} and $\WEHI(\phi)$ hold. Then there
exist constants $\beta\in(0,1)$ and $c>0$ such that for any $x_0\in
M$, $r>0$ and harmonic function $u$ on $B(x_0,r)$,
\begin{equation}\label{ee:ewehi}\essosc_{B(x_0,\rho)}u\le c \,\|u\|_\infty \cdot \left(
\frac{\rho}{r}\right)^\beta,\quad 0<\rho\le r.\end{equation} In
particular, $\EHR$ holds.
\end{theorem}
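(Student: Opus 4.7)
The plan is to carry out a Moser-type oscillation-decay iteration adapted to the non-local setting, in the spirit of \cite{Mos, Sil, DK}. Fix $x_0 \in M$, $r > 0$, and let $u$ be harmonic in $B(x_0, r)$ with $\|u\|_\infty < \infty$. Pick a scale parameter $\sigma \in (0, \delta]$ (to be chosen small), where $\delta$ is the constant from $\WEHI(\phi)$, and set $r_k := \sigma^k r$, $B_k := B(x_0, r_k)$, $M_k := \esssup_{B_k} u$, $m_k := \essinf_{B_k} u$, and $\omega_k := M_k - m_k$. The key one-step inequality I aim for is
$$
\omega_{k+1} \le \gamma_0\, \omega_k + C \sum_{i=0}^{k-1} \sigma^{(k-i)\beta_1}\, \omega_i + C\, \sigma^{(k+1)\beta_1}\, \|u\|_\infty,
$$
with $\gamma_0 \in (0,1)$ and $C$ depending only on the constants in $\VD$, \eqref{pvd1} and $\WEHI(\phi)$. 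Once this is established, induction in $k$ yields $\omega_k \le K\|u\|_\infty \sigma^{k\beta}$ for some $\beta \in (0, \beta_1)$, which produces \eqref{ee:ewehi} on choosing $k$ with $r_{k+1} < \rho \le r_k$. The $\EHR$ assertion will then follow, since for $x, y \in B(x_0, \eta r)$ with small $\eta<1/2$, the ball $B(x, (1-\eta)r) \subset B(x_0, r)$ and \eqref{ee:ewehi} applied there with $\rho = d(x,y)$ gives the pointwise H\"older bound outside a properly exceptional set.

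For the one-step inequality, I would argue by dichotomy: either $\mu(\{u \le (M_k + m_k)/2\} \cap B_{k+1}) \ge \mu(B_{k+1})/2$ or the reverse inequality holds; the two cases are symmetric under $u \mapsto -u$, so treat the former. Define $v := (M_k - u)/\omega_k$, which is harmonic and non-negative on $B_k$, bounded by $1$ on $B_k$, and satisfies $v \ge 1/2$ on a subset of $B_{k+1}$ of measure at least $\mu(B_{k+1})/2$. Applying $\WEHI(\phi)$ to $v$ with outer ball $B_k$ and inner ball $B_{k+1}$ (admissible because $\sigma \le \delta$) yields
$$
2^{-1 - 1/\varepsilon} \le \left(\frac{1}{\mu(B_{k+1})}\int_{B_{k+1}} v^\varepsilon\, d\mu\right)^{1/\varepsilon} \le c\left(\frac{M_k - M_{k+1}}{\omega_k} + \phi(x_0, r_{k+1})\, \T_\phi(v_-; x_0, r_k)\right).
$$
Since $\omega_{k+1} \le \omega_k - (M_k - M_{k+1})$, this reduces everything to estimating the non-local tail.

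For the tail, decompose $B_k^c = (M \setminus B_0) \sqcup \bigsqcup_{i=0}^{k-1}(B_i \setminus B_{i+1})$. On the annulus $B_i \setminus B_{i+1}$ (with $i < k$), the range $u(z) \in [m_i, M_i]$ forces $v_-(z) = \max((u(z) - M_k)/\omega_k, 0) \le \omega_i/\omega_k$; on $M \setminus B_0$ the trivial bound $v_-(z) \le 2\|u\|_\infty / \omega_k$ holds. Using $\VD$ to bound $\mu(B_i)/V(x_0, r_{i+1})$ by a constant and the lower inequality in \eqref{pvd1} to give $\phi(x_0, r_{k+1})/\phi(x_0, r_{i+1}) \le C \sigma^{(k-i)\beta_1}$, a direct summation produces
$$
\phi(x_0, r_{k+1})\, \T_\phi(v_-; x_0, r_k) \le C \sum_{i=0}^{k-1} \sigma^{(k-i)\beta_1}\, \frac{\omega_i}{\omega_k} + C\, \sigma^{(k+1)\beta_1}\, \frac{\|u\|_\infty}{\omega_k},
$$
which, after multiplication by $\omega_k$, is precisely the one-step bound.

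The main obstacle will be closing the recursion despite the memory term $\sum_{i=0}^{k-1} \sigma^{(k-i)\beta_1}\omega_i$, which prevents a naive contraction. The remedy is a direct induction on $k$: assuming $\omega_j \le K\|u\|_\infty \sigma^{j\beta}$ for all $j \le k$ and inserting into the one-step estimate yields
$$
\omega_{k+1} \le K\|u\|_\infty\, \sigma^{(k+1)\beta}\left(\gamma_0\, \sigma^{-\beta} + \frac{C\, \sigma^{\beta_1 - 2\beta}}{1 - \sigma^{\beta_1 - \beta}}\right) + C\, \sigma^{(k+1)\beta_1}\,\|u\|_\infty.
$$
Since the bracket tends to $\gamma_0 < 1$ as $\sigma \to 0$ and then $\beta \to 0^+$ (with $\beta < \beta_1$), one first fixes $\sigma$ small and then $\beta$ small so that the bracket is strictly less than $1 - \eta$; taking $K$ large enough to absorb the residual term $C\sigma^{(k+1)\beta_1}\|u\|_\infty$ and to handle the base case $\omega_0 \le 2\|u\|_\infty$ completes the induction.
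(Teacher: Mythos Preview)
Your approach is essentially the same Moser--Silvestre oscillation iteration as the paper's: dichotomy on sublevel sets, application of $\WEHI(\phi)$ to an affine rescaling of $u$, tail control via the oscillation on larger balls, and geometric iteration. The paper organizes the induction slightly differently---it fixes sequences $m_n\le M_n$ with $M_n-m_n=K\theta^{-n\beta}$ exactly, which absorbs the ``memory'' directly into the explicit tail bound $\sum_j\theta^{-j\beta_1}(\theta^{j\beta}-1)$---but this is bookkeeping rather than a different idea.

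Two points need attention, however. First, your tail estimate as described makes $C$ depend on $\sigma$: bounding $\mu(B_i)/V(x_0,r_{i+1})$ via $\VD$ costs $\tilde C_\mu(r_i/r_{i+1})^{d_2}=\tilde C_\mu\sigma^{-d_2}$, so your effective $C$ carries a factor $\sigma^{-d_2}$. When $d_2\ge\beta_1$ (the generic situation, e.g.\ stable-like processes on $\bR^d$ with $d\ge 2$) the closing step then fails, since the bracket no longer approaches $\gamma_0$ as $\sigma\to 0$. The fix is easy: replace that crude bound by the universal estimate
\[
\int_{B_{i+1}^c}\frac{\mu(dz)}{V(x_0,d(x_0,z))\phi(x_0,d(x_0,z))}\le \frac{C'}{\phi(x_0,r_{i+1})},
\]
which follows from $\VD$ and \eqref{pvd1} alone via dyadic annuli and gives a $C$ independent of $\sigma$. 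Second, the phrase ``the bracket tends to $\gamma_0$ as $\sigma\to 0$ and then $\beta\to 0^+$'' is backwards: $\gamma_0\sigma^{-\beta}\to\infty$ if you send $\sigma\to 0$ first with $\beta>0$ fixed. The correct order (which your following sentence does get right) is to first fix $\sigma$ small enough that $\gamma_0+C\sigma^{\beta_1}/(1-\sigma^{\beta_1})<1$, and then take $\beta$ small so that $\sigma^{-\beta}$ times this quantity is still below $1$. With these two corrections your argument goes through.
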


\begin{proof} (1) Without loss of generality, we  assume   the harmonic  function $u$ is bounded.  Throughout the proof, we fix $x_0\in M$, and denote by $B_r=B(x_0,r)$ for any $r>0$. For a given bounded harmonic  function $u$ on $B_r$, we will
construct an increasing sequence $(m_n)_{n\ge1}$ of positive numbers and a decreasing sequence $(M_n)_{n\ge1}$ that satisfy for any $n\in \bN\cup \{0\}$,
\begin{equation}\label{e:profh1}
\begin{split}&m_n\le u(x)\le M_n \quad \hbox{ for } x\in B_{r\theta^{-n}};\\
&M_n-m_n=K\theta^{-n\beta}.\end{split}
\end{equation}
Here $K=M_0-m_0\in[0,2 \|u\|_\infty]$ with $M_0=\|u\|_\infty$ and $m_0=\essinf_{M} u$,  and the constants $\theta=\theta(\delta)\ge \delta^{-1}$ and $\beta=\beta(\delta)\in(0,1)$ are determined later so  that
\begin{equation}\label{e:constantse}
\frac{2-\lambda}{2}\theta^\beta\le 1 \quad \hbox{for } \lambda :=(2^{1+1/\varepsilon}c)^{-1}\in(0,1),\end{equation}
where $\eps, \delta \in (0, 1)$ and $c\geq 1$   are the constants in the definition of $\WEHI(\phi)$.

Let us first show that how this construction proves the first
desired assertion \eqref{ee:ewehi}. Given $\rho <r$, there is a
$j\in \bN\cup \{0\}$ such that
$$r\theta^{-j-1}\le \rho<r\theta^{-j}.$$ From \eqref{e:profh1}, we conclude
$$\essosc_{B_\rho}u\le \essosc_{B_{r\theta^{-j}}}u\le M_j-m_j=  K \theta^{-j\beta}\le 2 \theta^\beta\|u\|_\infty\left(\frac{\rho}{r}\right)^\beta.$$

Set $M_{-n}=M_0$ and $m_{-n}=m_0$ for any $n\in \bN$. Assume that there is a $k\in \bN$ and there are $M_n$ and $m_n$ such that \eqref{e:profh1} holds for $n\le k-1$. We need to choose $m_k$, $M_k$ such that \eqref{e:profh1} still holds for $n=k$. Then the desired assertion follows by induction. For any $x\in M$, set
$$v(x)= \left(u(x)-\frac{M_{k-1}+m_{k-1}}{2}\right)\frac{2\theta^{(k-1)\beta}}{K}.$$ Then the definition of $v$ implies that $|v(x)|\le 1$ for almost all $x\in B_{r\theta^{-(k-1)}}$. Given $y\in M$ with $d(y,x_0)\ge r\theta^{-(k-1)}$, there is a $j\in \bN$ such that
$$r\theta^{-k+j}\le d(y,x_0)<r\theta^{-k+j+1}.$$ For such $y\in M$ and $j\in \bN$, on the one hand, we conclude that
\begin{align*}\frac{K}{2\theta^{(k-1)\beta}}v(y)=&u(y)-\frac{M_{k-1}+m_{k-1}}{2}\\
\le& M_{k-j-1}-m_{k-j-1}+m_{k-j-1}- \frac{M_{k-1}+m_{k-1}}{2}\\
\le& M_{k-j-1}-m_{k-j-1}-\frac{M_{k-1}-m_{k-1}}{2}\\
\le &K \theta^{-(k-j-1)\beta}-\frac{K}{2}\theta^{-(k-1)\beta},\end{align*} where in the equalities above we used the fact that
if $j> k-1$, then
$u(y)\le M_0$, $m_{k-j-1}\ge m_0$ and $M_0-m_0\le
K\theta^{-(k-j-1)\beta}$.
That is,
\begin{equation}\label{e:lowerph}v(y)\le 2 \theta^{j\beta}-1\le 2\left(\frac{d(y,x_0)}{r\theta^{-k}}\right)^\beta-1.\end{equation} On the other hand, similarly, we have
\begin{align*}\frac{K}{2\theta^{(k-1)\beta}}v(y)=&u(y)-\frac{M_{k-1}+m_{k-1}}{2}\\
\ge& m_{k-j-1}-M_{k-j-1}+M_{k-j-1}- \frac{M_{k-1}+m_{k-1}}{2}\\
\ge& -(M_{k-j-1}-m_{k-j-1})+\frac{M_{k-1}-m_{k-1}}{2}\\
\ge &-K \theta^{-(k-j-1)\beta}+\frac{K}{2}\theta^{-(k-1)\beta},\end{align*}
i.e.
$$v(y)\ge 1- 2\theta^{j\beta}\ge 1-2\left( \frac{d(y,x_0)}{r\theta^{-k}}\right)^\beta.$$

Now, there are two cases:

(i) $\displaystyle \mu \left( \{x\in B_{r\theta^{-k}}: v(x)\le 0\} \right) \ge  \mu( B_{r\theta^{-k}}) /2$.

(ii) $\displaystyle \mu \left( \{x\in B_{r\theta^{-k}}: v(x)> 0\} \right)\ge  \mu( B_{r\theta^{-k}})/2$.

In case (i) we aim to show $v(z)\le 1-\lambda$ for almost every $z\in B_{r\theta^{-k}}$. If this holds true, then for any $z\in B_{r\theta^{-k}}$,
\begin{align*}u(z)&\le \frac{(1-\lambda)K}{2}\theta^{-(k-1)\beta}+\frac{M_{k-1}+m_{k-1}}{2}\\
&=\frac{(1-\lambda)K}{2}\theta^{-(k-1)\beta}+\frac{M_{k-1}-m_{k-1}}{2}+m_{k-1}\\
&=\frac{(1-\lambda)K}{2}\theta^{-(k-1)\beta}+\frac{K}{2}\theta^{-(k-1)\beta}+m_{k-1}\\
&\le K\theta^{-k\beta}+m_{k-1}, \end{align*} where the last inequality follows from the first inequality in \eqref{e:constantse}. Thus, we set $m_k=m_{k-1}$ and $M_k=m_k+K\theta^{-k\beta}$, and obtain that $m_k\le u(z)\le M_k$ for almost every $z\in B_{r\theta^{-k}}.$

Consider $w=1-v$ and note that $w\ge 0$ in $B_{r\theta^{-(k-1)}}$.
Since in the present setting there is no killing inside $M_0$ for the process $X$,
constant functions are
harmonic, and so $w$ is also harmonic function. Applying $\WEHI(\phi)$ with $w$ on $B_{r\theta^{-(k-1)}}$, we find that
\be\label{eq:fenon1}\begin{split}
\bigg(\frac{1}{\mu (B_{r\theta^{-k}})}&\int_{B_{r\theta^{-k}}} w^\varepsilon\,du\bigg)^{1/\varepsilon}\\
&\le c_1\left(\essinf _{B_{r\theta^{-k}}}w+{\phi(x_0,r\theta^{-k})}{\T_\phi}(w_-; x_0, r\theta^{-(k-1)})\right).\end{split}
\ee
Note that, since the constant $c$ in the definition of $\WEHI(\phi)$ may depend on $\delta$ and $\varepsilon$, in the above inequality the constant $c_1=c$ could also depend on $\delta$ and $\varepsilon$, thanks to the fact that $\theta^{-1}\le \delta$. Under case (i),
\be\label{eq:fenon2}
\left(\frac{1}{\mu (B_{r\theta^{-k}})}\int_{B_{r\theta^{-k}}} w^\varepsilon\,du\right)^{1/\varepsilon}\ge 2^{-1/\varepsilon}.\ee
On the other hand, by \eqref{e:lowerph}, Remark \ref{intelem} and \eqref{pvd1},
\begin{equation}\label{eq:fenon3}\begin{split}\phi&(x_0,r\theta^{-(k-1)}){\T_\phi}(w_-; x_0, r\theta^{-(k-1)})\\
\le& \phi(x_0,r\theta^{-(k-1)})\int_{B_{r\theta^{-(k-1)}}^c} \frac{(1-v(z))_-}{V(x_0,d(x_0,z))\phi(x_0,d(x_0,z))}\,\mu(dz)\\
\le&\phi(x_0,r\theta^{-(k-1)})\sum_{j=1}^\infty\int_{B_{r\theta^{-k+j+1}}\backslash B_{r\theta^{-k+j}}} \frac{(1-v(z))_-}{V(x_0,d(x_0,z))\phi(x_0,d(x_0,z))}\,\mu(dz)\\
\le&\phi(x_0,r\theta^{-(k-1)})\sum_{j=1}^\infty\int_{B_{r\theta^{-k+j+1}}\backslash B_{r\theta^{-k+j}}} \frac{(v(z)-1))_+}{V(x_0,d(x_0,z))\phi(x_0,d(x_0,z))}\,\mu(dz)\\
\le& 2\phi(x_0,r\theta^{-(k-1)})\\
&\times \sum_{j=1}^\infty \int_{B_{r\theta^{-k+j+1}}\backslash B_{r\theta^{-k+j}}} \left[ \left(\frac{ d(x_0,z)}{r\theta^{-k}}\right)^\beta-1\right]\frac{1}{V(x_0,d(x_0,z))\phi(x_0,d(x_0,z))}\,\mu(dz)\\
\le&c_2 \phi(x_0,r\theta^{-(k-1)})\sum_{j=1}^\infty\frac{\theta^{(j+1)\beta}-1}{\phi(x_0,r\theta^{-k+j})}\\
\le& c_3 \sum_{j=1}^\infty\theta^{-j\beta_1}(\theta^{j\beta}-1),\end{split}\end{equation}
where $c_3>0$ is a constant independent of $k$ and $r$ but depend on $\theta$ and $\beta_1$ from \eqref{pvd1}.
Hence,
by \eqref{pvd1}, \eqref{eq:fenon1}, \eqref{eq:fenon2} and
\eqref{eq:fenon3}, we obtain
\begin{align*}\essinf _{B_{r\theta^{-k}}}w&\ge (c_12^{1/\varepsilon})^{-1}-c_4 \phi(x_0,r\theta^{-(k-1)}){\T_\phi}(w_-; x_0, r\theta^{-(k-1)})\\
&\ge (c2^{1/\varepsilon})^{-1}- c_5 \sum_{j=1}^\infty\theta^{-j\beta_1}(\theta^{j\beta}-1). \end{align*} Note that all the constants $c_i$ $(i=1,\ldots, 5)$ may depend on $\theta$. Since for any $\beta\in(0,\beta_1)$,
$$  \sum_{j=1}^\infty\theta^{-j\beta_1}(\theta^{j\beta}-1)<\infty,$$ we can choose $l$ large enough (which is independent of $\beta,\theta$ and only depends on $\delta$) such that for any $\beta\in (0,\beta_1/2)$,
$$ \sum_{j=l+1}^\infty\theta^{-j\beta_1}(\theta^{j\beta}-1)\le \sum_{j=l+1}^\infty\theta^{-j\beta_1}(\theta^{j\beta_1/2}-1)\le \sum_{j=l+1}^\infty\delta^{j\beta_1/2}< (4c_5c2^{1/\varepsilon})^{-1}.$$ Given $l$, one can further take $\beta\in(0,\beta_1/2)$ small enough such that $$\sum_{j=1}^l\theta^{-j\beta_1}(\theta^{j\beta}-1)\le \beta (\log \theta)\sum_{j=1}^l\theta^{-j(\beta_1-\beta)} j \le \beta l \theta^{-\beta_1/2} (\log \theta)< (4c_5c2^{1/\varepsilon})^{-1}.$$ (Without loss of generality we may and do assume that $\delta$ in the definition of $\WEHI(\phi)$ is small enough. Thus, the constant $\beta$ here is also independent of $\theta$ and only depends on $\delta$.)  Therefore,
$$\essinf _{B_{r\theta^{-k}}}w\ge (2c2^{1/\varepsilon})^{-1}=\lambda.$$ That is, $v\le 1-\lambda$ on $B_{r\theta^{-k}}$.

In case (ii), our aim is to show $v\ge -1+\lambda$. This time we set $w=1+v$. Following the arguments above, one sets $M_k=M_{k-1}$ and $m_k=M_k-K\theta^{-k\beta}$ leading to the desired result.

(2) Let $\delta_0\in (0,1/3)$. Then for almost all $x,y\in B(x_0,\delta r)$, the function $u$ is harmonic on $B(x,(1-\delta_0)r)$. Note that $d(x,y)\le 2\delta_0r\le (1-\delta_0)r$. Applying \eqref{ee:ewehi}, we have
$$|u(x)-u(y)|\le \essosc_{B(x,d(x,y))}u\le c\|u\|_\infty\cdot \left(\frac{d(x,y)}{(1-\delta_0)r}\right)^\beta.
$$
This establishes $\EHR$. \qed
\end{proof}

\begin{remark}\rm The argument above in fact shows that   $\WEHI(\phi)\Longrightarrow \EHR$ holds for any general jump processes (possibly non-symmetric) that admits no killings inside $M$.
\end{remark}

\subsection{$\WEHI(\phi)+\J_\phi+\FK(\phi)+\CSJ(\phi)\Longrightarrow \EHI(\phi)$}\label{S:2.2}

Let $D$ be an open subset of $M$.
Recall that a function $f$ is said to be locally in $\sF_{D}$, denoted as $f\in \sF_{D}^{loc}$, if for every relatively compact subset $U$ of ${D}$, there is a function $g\in \sF_{D}$ such that $f=g$ $m$-a.e. on $U$.
The following is established in    \cite{Chen}.
\begin{lemma}{\rm (\cite[Lemma 2.6]{Chen})}
Let ${D}$ be an open subset of $M$. Suppose $u$ is a function in $\sF_{D}^{loc}$ that is locally bounded on ${D}$
and satisfies that
\begin{equation}\label{con-1}
\int_{U\times V^c} |u(y)|\,J(dx,dy)<\infty
\end{equation} for any relatively compact open sets $U$ and $V$ of $M$ with $\bar{U}\subset V \subset \bar{V} \subset {D}$.
 Then  for every $v\in \sF\cap C_c({D})$, the expression
$$
\int (u(x)-u(y))(v(x)-v(y))\,J(dx,dy)
$$ is well defined and finite; it will still be denoted as $\sE(u,v)$.
\end{lemma}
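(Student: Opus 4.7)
The plan is to exploit that $v$ has compact support in $D$ in order to localize the integration, to replace $u$ by a genuine element of $\sF$ on a neighborhood of $\supp v$ via the $\sF_D^{loc}$ property, and then to control the remaining off-diagonal piece using the tail hypothesis together with a finiteness bound on $J(U\times V^c)$ coming from a cutoff function supplied by the regularity of $(\sE,\sF)$.

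First I would fix $v\in\sF\cap C_c(D)$, set $K:=\supp v\subset D$, and pick relatively compact open sets $U,V\subset M$ with $K\subset U\subset\overline U\subset V\subset\overline V\subset D$. Since $u\in\sF_D^{loc}$ and $V$ is relatively compact in $D$, there exists $g\in\sF_D$ with $u=g$ $\mu$-a.e.\ on $V$, and I take the quasi-continuous representatives of all functions in $\sF$, following the paper's standing convention. I would then decompose
\[M\times M=(V\times V)\cup(V\times V^c)\cup(V^c\times V)\cup(V^c\times V^c).\]
On $V^c\times V^c$ the integrand vanishes since $\supp v\subset V$, and by symmetry of $J$ the two mixed regions contribute equally, so it suffices to bound the $V\times V$ and $V\times V^c$ pieces.

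On $V\times V$ the identification $u=g$ lets me rewrite the integrand as $(g(x)-g(y))(v(x)-v(y))$ and apply Cauchy--Schwarz to get
\[\Bigl|\int_{V\times V}(g(x)-g(y))(v(x)-v(y))\,J(dx,dy)\Bigr|\le \sE(g,g)^{1/2}\sE(v,v)^{1/2}<\infty.\]
On $V\times V^c$ one has $v(y)=0$ and $v(x)=0$ outside $K\subset U$, so the contribution reduces to $\int_{U\times V^c}(u(x)-u(y))v(x)\,J(dx,dy)$. Splitting this gives a $u(y)$-piece, dominated by $\|v\|_\infty\int_{U\times V^c}|u(y)|\,J(dx,dy)$ and finite by hypothesis, and a $u(x)$-piece, dominated by $\|v\|_\infty\,(\esssup_U|u|)\,J(U\times V^c)$; the essential supremum is finite by local boundedness of $u$ on $\overline U\subset D$, and by regularity of $(\sE,\sF)$ there is a cutoff $\varphi\in\sF\cap C_c(V)$ with $0\le\varphi\le 1$ and $\varphi=1$ on $\overline U$, so that $J(U\times V^c)\le \sE(\varphi,\varphi)<\infty$.

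The main obstacle I anticipate is the measure-theoretic reconciliation between the $\mu$-a.e.\ identification $u=g$ on $V$ and the fact that the double integral is against $J(dx,dy)$, which need not be absolutely continuous with respect to $\mu\otimes\mu$. This is exactly why the paper fixes quasi-continuous representatives once and for all: $\sE$-polar sets do not charge $J$ in the off-diagonal sense used here, so the replacement of $u$ by $g$ on $V\times V$ is legitimate. Combining the bounds above then yields the asserted finiteness, and the value is independent of the auxiliary choices $U$, $V$ and $g$ (by comparing any two such choices on a common refinement), justifying the notation $\sE(u,v)$.
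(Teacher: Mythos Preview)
The paper does not supply its own proof of this lemma; it is quoted verbatim from \cite[Lemma 2.6]{Chen} and immediately followed only by the remark that $J(U\times V^c)\le\sE(\psi,\psi)<\infty$ for a cutoff $\psi$, which is precisely the ingredient you invoke in your off-diagonal estimate. Your argument is correct and is the standard one: localize via $\supp v$, replace $u$ by $g\in\sF_D$ on $V\times V$ and control that piece by Cauchy--Schwarz, then handle $U\times V^c$ by splitting into the tail term (finite by hypothesis) and the $u(x)$-term (finite by local boundedness and $J(U\times V^c)<\infty$). Your discussion of quasi-continuous versions to justify the $J$-a.e.\ substitution $u=g$ on $V$ is also appropriate and matches the paper's standing convention.
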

As noted in \cite[(2.3)]{Chen}, since $(\sE,\sF)$ is a regular Dirichlet form on $L^2(M; \mu)$, for any relatively compact open sets $U$ and $V$ with $\bar{U}\subset V$, there is a function $\psi\in \sF\cap C_c(M)$ such that $\psi=1$ on $U$ and $\psi=0$ on
$V^c$.
 Consequently,
$$\int_{U\times V^c}\,J(dx,dy)=\int_{U\times V^c}(\psi(x)-\psi(y))^2\,J(dx,dy)\le \sE(\psi,\psi)<\infty,$$
so each bounded function $u$ satisfies \eqref{con-1}.

We say that a nearly Borel measurable function $u$ on $M$
is \emph{$\sE$-subharmonic}  (resp. \emph{$\sE$-harmonic, $\sE$-superharmonic})
in ${D}$ if $u\in\sF_{D}^{loc}$ that is locally bounded on $D$, satisfies
\eqref{con-1}
for any relatively compact open sets $U$ and $V$ of $M$ with $\bar{U}\subset V \subset \bar{V} \subset {D}$, and that
\begin{equation*}\label{an-har}
\sE(u,\varphi)\le 0 \quad (\textrm{resp.}\ =0, \ge0)
\quad \hbox{for any } 0\le\varphi\in\sF\cap C_c(D).
\end{equation*}

The following is established in \cite[Theorem 2.11  and Lemma 2.3]{Chen} first for harmonic functions,
and then extended in \cite[Theorem 2.9]{ChK} to subharmonic functions.

\begin{theorem}\label{equ-har} Let ${D}$ be an open subset of $M$, and   $u$ be  a bounded function.
Then $u$ is $\sE$-harmonic $($resp.  $\sE$-subharmonic$)$ in ${D}$ if and only if $u$ is  harmonic
 $($resp. subharmonic$)$
 in ${D}$.
 \end{theorem}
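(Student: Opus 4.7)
The plan is to bridge the analytic formulation via $\sE(u,\varphi)$ and the probabilistic (sub)martingale formulation through the Fukushima decomposition of additive functionals, following the approach of \cite{Chen,ChK}. The key point is that both notions translate into the same statement about the drift part of a suitable decomposition of $u(X_t)$.

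The first step is localization: given relatively compact open sets $U \subset \bar U \subset V \subset \bar V \subset D$, pick a cutoff $\psi \in \sF \cap C_c(D)$ with $\psi = 1$ on $\bar V$. Since $u \in \sF_D^{\mathrm{loc}}$ is locally bounded and satisfies \eqref{con-1}, one can produce a bounded $\tilde u \in \sF$ with $\tilde u = u$ on $V$ (e.g.\ by multiplying $u$ by $\psi$ after a suitable truncation and noting that \eqref{con-1} controls the extra jump energy). The Fukushima decomposition then yields
$$
\tilde u(X_t) - \tilde u(X_0) = M_t^{[\tilde u]} + N_t^{[\tilde u]}, \qquad t \ge 0,
$$
where $M^{[\tilde u]}$ is a martingale additive functional of finite energy and $N^{[\tilde u]}$ is a continuous additive functional of zero energy. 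Stopping at $\tau_U$ produces the corresponding decomposition of $u(X_{t \wedge \tau_U})$; jumps out of $U$ are recorded by the jump part of $M^{[\tilde u]}$, whose compensator is finite thanks to \eqref{con-1}.

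Next, I would invoke the Revuz correspondence to identify $N^{[\tilde u]}$ on $V$ with a signed smooth measure $\nu$, and to show
$$
\sE(u,\varphi) = -\int_V \varphi \, d\nu \qquad \text{for all } \varphi \in \sF \cap C_c(V).
$$
Consequently, the analytic condition $\sE(u,\varphi) \le 0$ (resp.\ $= 0$) for all non-negative $\varphi \in \sF \cap C_c(D)$ is equivalent to $\nu$ being a non-positive (resp.\ zero) measure on $V$, which in turn is equivalent to $N^{[\tilde u]}$ being non-increasing (resp.\ identically zero). This last property is exactly the probabilistic submartingale (resp.\ martingale) property of $t \mapsto u(X_{t \wedge \tau_U})$ under $\bP^x$ for $\sE$-q.e. $x \in U$; uniform integrability follows from boundedness of $u$ on $\bar U$ together with \eqref{con-1}, which bounds $\bE^x |u(X_{\tau_U})|$ uniformly.

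The main obstacle is the non-local boundary contribution: because $X$ can jump from $U$ directly to any point of $M$, one must verify carefully that the stopped martingale additive functional encodes $u(X_\cdot)$ rather than the truncated $\tilde u(X_\cdot)$, and that the Revuz measure matches the distribution associated with the bilinear form $\sE(u,\cdot)$. The hypothesis \eqref{con-1} is precisely what allows this reconciliation, carried out in \cite[Lemma 2.3 and Theorem 2.11]{Chen} and extended to the subharmonic setting via signed Revuz measures in \cite[Theorem 2.9]{ChK}. Removing the exceptional set and passing from quasi-everywhere to pointwise statements on $M_0$ are then standard once these key identifications are in hand.
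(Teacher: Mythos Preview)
Your proposal is correct and follows exactly the route the paper relies on: the paper gives no self-contained proof of this theorem but simply cites \cite[Theorem~2.11 and Lemma~2.3]{Chen} for the harmonic case and \cite[Theorem~2.9]{ChK} for the subharmonic extension, and your outline via localization, Fukushima decomposition, and the Revuz correspondence is precisely the argument carried out in those references.
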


The next lemma
can be proved by the same argument as that for
\cite[Proposition 2.3]{CKW1}.

\begin{lemma}    \label{cap-1}
Assume that $\VD$, \eqref{pvd1}, \eqref{pvd}, $\J_{\phi,\le}$ and $\CSJ(\phi)$ hold. Then there is a constant $c_0>0$ such that for  every
$0<r\le R$ and almost all $x\in M$,
$$
\mbox{\rm Cap} (B(x,R),B(x,R+r))\le c_0\frac{V(x,R+r)}{\phi(x,r)}.
$$\end{lemma}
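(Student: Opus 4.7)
The plan is to produce a cutoff function $\vp$ for $B(x,R)\subset B(x,R+r)$ from $\CSJ(\phi)$, use it as a test function in the relative capacity, and bound the resulting Dirichlet energy $\sE(\vp,\vp)=\Gam(\vp,\vp)(M)$ by splitting it into an ``interior'' piece on $B(x,R+(1+C_0)r)$, handled by $\CSJ(\phi)$ itself, and a ``tail'' piece on $B(x,R+(1+C_0)r)^c$, handled via $\IJ_{\phi,\le}$ (which follows from $\J_{\phi,\le}$ by Remark \ref{intelem}).

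First I would apply $\CSJ(\phi)$ at base point $x_0=x$ with a test function $f\in\sF_b$ chosen so that $f\equiv 1$ on $B(x,R+(1+C_0)r)$; such an $f$ exists by regularity of $(\sE,\sF)$. Because $U^*\subset B(x,R+(1+C_0)r)$, the interaction term $\int_{U\times U^*}(f(z)-f(w))^2\,J(dz,dw)$ vanishes, and $\CSJ(\phi)$ yields a cutoff $\vp\in\sF_b$ for $B(x,R)\subset B(x,R+r)$ satisfying
\[
\Gam(\vp,\vp)\bigl(B(x,R+(1+C_0)r)\bigr)\ \le\ \frac{C_2}{\phi(x,r)}\,V(x,R+(1+C_0)r)\ \le\ \frac{C}{\phi(x,r)}\,V(x,R+r),
\]
the last step by $\VD$. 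Since $\vp$ is admissible in $\Ca(B(x,R),B(x,R+r))$, we have $\Ca(B(x,R),B(x,R+r))\le\sE(\vp,\vp)=\Gam(\vp,\vp)(M)$, and the interior part of this total energy is already controlled.

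For the tail, $\vp\equiv 0$ on $B(x,R+r)^c$, so by symmetry of $J(dx,dy)$ and Fubini,
\[
\Gam(\vp,\vp)\bigl(B(x,R+(1+C_0)r)^c\bigr)\ =\ \int_{B(x,R+r)} \vp(y)^2\,J\bigl(y,B(x,R+(1+C_0)r)^c\bigr)\,\mu(dy).
\]
For $y\in B(x,R+r)$ one has $B(x,R+(1+C_0)r)^c\subset B(y,C_0r)^c$, and $\IJ_{\phi,\le}$ combined with \eqref{pvd1} (using $C_0\in(0,1]$) gives $J(y,B(y,C_0r)^c)\le c/\phi(y,C_0r)\le c'/\phi(y,r)$. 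To convert this into a bound of the required form, I would dyadically decompose $B(x,R+r)$ as $B(x,r)$ together with shells $B(x,2^{k+1}r)\setminus B(x,2^{k}r)$; on each shell, \eqref{pvd} applied at scale $2^{k+1}r\ge d(x,y)$ compares $\phi(y,\cdot)$ to $\phi(x,\cdot)$, then \eqref{pvd1} descends from that scale back to $r$, while $\VD$ controls each shell's volume. Summing the resulting geometric series delivers the bound $c\,V(x,R+r)/\phi(x,r)$ on the tail, and combining with the interior estimate yields the claim.

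The principal difficulty is the state-dependence of $\phi$: unlike the state-independent setting of \cite[Proposition 2.3]{CKW1}, the factor $1/\phi(y,C_0r)$ cannot simply be pulled outside the integral. The delicate point is to align the scales at which \eqref{pvd} (cross-point comparability of $\phi$ at scales $\ge d(\cdot,x)$) and \eqref{pvd1} (radius-scaling of $\phi$) are invoked along the dyadic decomposition so that the summed contributions from all shells collapse to a constant multiple of $V(x,R+r)/\phi(x,r)$. This is the step that leverages the new hypothesis \eqref{pvd} and adapts the argument of \cite{CKW1} to the present setting.
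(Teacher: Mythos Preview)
Your approach follows the paper's reference to \cite[Proposition~2.3]{CKW1}. The interior step---applying $\CSJ(\phi)$ with an $f\in\sF_b$ identically~$1$ on $B(x,R+(1+C_0)r)$ to kill the $U\times U^*$-term and obtain $\Gam(\vp,\vp)\bigl(B(x,R+(1+C_0)r)\bigr)\le C\,V(x,R+r)/\phi(x,r)$---is correct and is exactly what the CKW1 argument does.

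The gap is in the tail. Your dyadic scheme for $y$ in the $k$-th shell $B(x,2^{k+1}r)\setminus B(x,2^{k}r)$ goes up from scale $r$ to $2^{k+1}r$ via \eqref{pvd1}, across to the base point via \eqref{pvd}, then back down to scale $r$ via \eqref{pvd1}. But lower-bounding $\phi(y,r)$ forces you to use the \emph{upper} exponent $\beta_2$ in the ``up'' step, while expressing $\phi(x,2^{k+1}r)$ in terms of $\phi(x,r)$ in the ``down'' step uses the \emph{lower} exponent $\beta_1$; these do not cancel, and you are left with
\[
\frac{1}{\phi(y,r)}\ \le\ \frac{C\,2^{k(\beta_2-\beta_1)}}{\phi(x,r)}\,.
\]
Summing over shells up to $K\sim\log_2((R+r)/r)$ therefore yields a bound of order $((R+r)/r)^{\beta_2-\beta_1}\,V(x,R+r)/\phi(x,r)$, not $V(x,R+r)/\phi(x,r)$: the series is not geometric in the direction you need, and the claimed collapse fails whenever $\beta_2>\beta_1$. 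In the state-independent setting of \cite{CKW1} this issue never arises because $1/\phi(y,r)\equiv 1/\phi(r)$ comes outside the $y$-integral directly; you have correctly identified state-dependence as the delicate point, but the remedy you propose does not close the gap for general $r\ll R$. (It does give the right bound when $r\asymp R$, which is in fact the only regime in which the paper invokes this lemma---see Lemma~\ref{L:cont} and Proposition~\ref{P:ec}.)
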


Using this lemma, we can establish
 the following.

\begin{lemma}\label{L:cont} Let $B_r=B(x_0,r)$ for some $x_0\in M$ and $r>0$. Assume that $u$ is a bounded and $\sE$-superharmonic function on $B_R$ such that $u\ge 0$ on $B_R$.  If $\VD$, \eqref{pvd1}, \eqref{pvd}, $\J_\phi$, $\FK(\phi)$ and $\CSJ(\phi)$
hold, then for any $0<r<R$,
$$\phi(x_0,r){\T_\phi}\, (u_+; x_0, r)\le c\left(\esssup_{B_r}u+ {\phi(x_0,r)}{\T_\phi}\,(u_-; x_0, R)\right),$$ where $c>0$ is a constant independent of $u$, $x_0$, $r$ and $R$.
\end{lemma}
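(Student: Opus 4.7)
The plan is a Caccioppoli-style analytic argument: we test the $\sE$-superharmonicity of $u$ against a $\CSJ(\phi)$-cutoff and isolate the $u_+$-tail. By the monotonicity $r \mapsto \T_\phi(u_+;x_0,r)$ (non-increasing in $r$) together with the doubling of $\phi$ from \eqref{pvd1}, it suffices to establish the estimate under the restriction $r\leq R/4$; the regime $r\in(R/4,R)$ follows by applying the result at $r'=R/4$ and comparing $\phi(x_0,r)$ with $\phi(x_0,R/4)$. Apply $\CSJ(\phi)$ at scales $(r/4,r/4)$ to produce a cutoff function $\varphi\in\sF_b$ with $0\leq\varphi\leq 1$, $\varphi\equiv 1$ on $B(x_0,r/4)$, and $\supp\varphi\subset B(x_0,r/2)\Subset B_R$. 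Inserting $f\equiv 1$ into the $\CSJ$ inequality and controlling the $\Gamma(\varphi,\varphi)$-mass outside $B(x_0,r/2)$ via $\IJ_{\phi,\le}$ (itself a consequence of $\J_\phi$; see Remark \ref{intelem}) yields $\sE(\varphi,\varphi)\leq cV(x_0,r)/\phi(x_0,r)$.

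Since $\varphi\geq 0$ has compact support in $B_R$ and $u$ is $\sE$-superharmonic on $B_R$, $\sE(u,\varphi)\geq 0$. Expanding the bilinear form, splitting into the diagonal block on $B_{r/2}\times B_{r/2}$ and the crossing block on $B_{r/2}\times B_{r/2}^c$ (on which $\varphi(y)=0$), decomposing $u(y)=u_+(y)-u_-(y)$ on the crossing block, and using $u_-\equiv 0$ on $B_R$ to restrict the $u_-$-integral to $B_R^c$, one obtains
\begin{equation*}
2\!\!\int_{B_{r/2}\times B_{r/2}^c}\!\! u_+(y)\varphi(x) J
\leq 2\!\!\int_{B_{r/2}\times B_{r/2}^c}\!\! u(x)\varphi(x) J
+ 2\!\!\int_{B_{r/2}\times B_R^c}\!\! u_-(y)\varphi(x) J
+ |I|,
\end{equation*}
where $I=\int_{B_{r/2}\times B_{r/2}}(u(x)-u(y))(\varphi(x)-\varphi(y))J(x,y)\mu(dx)\mu(dy)$. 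The LHS is bounded from below by $c_1 V(x_0,r)\,\T_\phi(u_+;x_0,r)$ using $\varphi\geq\mathbb{1}_{B(x_0,r/4)}$, the lower bound in $\J_\phi$, the standard comparabilities $V(x,d(x,y))\phi(x,d(x,y))\asymp V(x_0,d(x_0,y))\phi(x_0,d(x_0,y))$ for $x\in B(x_0,r/4)$ and $y\in B(x_0,r/2)^c$, and the inclusion $B(x_0,r/2)^c\supset B(x_0,r)^c$. The $u_-$-term on the RHS is bounded above by $cV(x_0,r)\,\T_\phi(u_-;x_0,R)$ analogously. For the outgoing-flux term, write $\varphi(x)=\varphi(x)-\varphi(y)$ on $B_{r/2}\times B_{r/2}^c$; then $u(x)\leq K:=\esssup_{B_r}u$ on $B_r$ together with Cauchy-Schwarz and the estimate on $\sE(\varphi,\varphi)$ produces a bound of order $K V(x_0,r)/\phi(x_0,r)$.

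The remaining and most delicate step is the interior energy $I$. By Cauchy-Schwarz,
\begin{equation*}
|I|\;\leq\; \sE(\varphi,\varphi)^{1/2}\cdot\Bigl(\int_{B_{r/2}\times B_{r/2}}(u(x)-u(y))^2 J\Bigr)^{1/2},
\end{equation*}
so it suffices to control the second factor. This is accomplished by a Caccioppoli-type estimate: one tests $\sE$-superharmonicity of $u$ against $\psi^2(K-u)$, where $\psi$ is an auxiliary $\CSJ(\phi)$-cutoff supported slightly beyond $B_{r/2}$ (the expression $\psi^2(K-u)$ is a valid non-negative element of $\sF\cap C_c(B_R)$ after truncation, since $K-u\geq 0$ on $B_r\supset \supp\psi$); combined with the $\CSJ(\phi)$ inequality applied with $f=K-u$, this yields $\int_{B_{r/2}\times B_{r/2}}(u(x)-u(y))^2 J\leq cK^2 V(x_0,r)/\phi(x_0,r)$, whence $|I|\leq cK V(x_0,r)/\phi(x_0,r)$. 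Collecting the estimates, dividing through by $V(x_0,r)$, and multiplying by $\phi(x_0,r)$ gives the claim; the Caccioppoli estimate for the interior energy is the main obstacle and is where the full strength of $\CSJ(\phi)$ is used.
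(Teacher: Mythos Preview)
Your treatment of the outgoing-flux term $2\int_{B_{r/2}\times B_{r/2}^c}u(x)\varphi(x)\,J(dx,dy)$ is where the argument breaks. Writing $\varphi(x)=\varphi(x)-\varphi(y)$ and applying Cauchy--Schwarz produces a factor $\bigl(\int_{B_{r/2}\times B_{r/2}^c}J(dx,dy)\bigr)^{1/2}$, and this integral is generically infinite: for $x$ close to $\partial B_{r/2}$ one has $J(x,B_{r/2}^c)\asymp \phi(x,d(x,\partial B_{r/2}))^{-1}$, which is not integrable over $B_{r/2}$ once the lower index $\beta_1\geq 1$. The energy bound on $\varphi$ only controls $\int_{B_{r/2}}\varphi(x)^2\,J(x,B_{r/2}^c)\,\mu(dx)$ (this is part of $\sE(\varphi,\varphi)$), not $\int_{B_{r/2}}\varphi(x)\,J(x,B_{r/2}^c)\,\mu(dx)$; $\CSJ(\phi)$ gives no pointwise decay of $\varphi$ near $\partial B_{r/2}$ that would save this. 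A secondary, fixable issue: your Caccioppoli bound $\int_{B_{r/2}^2}(u(x)-u(y))^2J\le cK^2V(x_0,r)/\phi(x_0,r)$ is missing a term $cKV(x_0,r)\,\T_\phi(u_-;x_0,R)$ coming from the off-diagonal part of the $\psi^2(K-u)$ test over $B_R^c$; this can be absorbed afterwards via AM--GM, but it should be recorded.

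The paper sidesteps both difficulties by testing with $w\varphi^2$, where $w=u-2k$, $k=\esssup_{B_r}u$, and $\varphi$ is a cutoff for $B_{r/2}\subset B_{3r/4}$. Since $w\varphi^2\le 0$ on $B_r$, superharmonicity yields $\sE(u,w\varphi^2)\le 0$. The off-diagonal block then carries $\varphi^2(x)=(\varphi(x)-\varphi(y))^2$ (for $y\notin B_r$), so the dangerous terms $k^2\int_{B_r\times B_r^c}\varphi^2(x)\,J$ combine with the diagonal lower bound $I_1\ge -8k^2\int_{B_r^2}(\varphi(x)-\varphi(y))^2J$ to form $-ck^2\sE(\varphi,\varphi)\ge -ck^2V(x_0,r)/\phi(x_0,r)$, which is finite and of the correct order. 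Moreover, because $|w|\le 2k$ on $B_r$, the diagonal term is handled by a direct algebraic identity, so no separate Caccioppoli estimate is needed. If you wish to rescue your line of argument, the minimal repair is to test with $\varphi^2$ rather than $\varphi$; then the $u(x)$-flux becomes $\le 2K\int\varphi(x)^2J(x,B_{r/2}^c)\,\mu(dx)\le 2K\,\sE(\varphi,\varphi)$, and the rest of your scheme can proceed.
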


\begin{proof} According to $\J_{\phi,\le}$, $\CSJ(\phi)$ and Lemma \ref{cap-1}, we can choose $\varphi\in \sF_{B_{3r/4}}$ related
to $\mbox{Cap} (B_{r/2},B_{3r/4})$ such that
\begin{equation}\label{e:u01}
\sE(\varphi,\varphi)\le 2  \mbox{Cap} (B_{r/2},B_{3r/4}) \le
\frac{c_1V(x_0,r)}{\phi(x_0,r)}.
\end{equation} Let $k=\esssup_{B_r}u$ and $w=u-2k$.
Since $u$ is an $\sE$-superharmonic function on $B_R$,
and $w\varphi^2\in \sF_{B_{3r/4}}$ with $w<0$ on $B_r$,
\begin{align*}0\ge \sE\big(u,w\varphi^2\big)
=&\int_{B_{r}\times B_{r}} (u(x)-u(y))(
w(x)\varphi^2(x)-w(y)\varphi^2(y))\,J(dx,dy)\\
&+2\int_{B_{r}\times B_{r}^c} (u(x)-u(y))
w(x)\varphi^2(x)\,J(dx,dy)\\
=&:I_1+2I_2.\end{align*}

For any $x,y\in B_r$, \begin{align*}
&(u(x)-u(y))(w(x)\vp^2(x)-w(y)\vp^2(y))\\
&=(w(x)-w(y))(w(x)\vp^2(x)-w(y)\vp^2(y))\\
&=\vp^2(x)(w(x)-w(y))^2+w(y)(\vp^2(x)-\vp^2(y))(w(x)-w(y))\\
&\ge \vp^2(x)(w(x)-w(y))^2 -\frac{1}{8} (\vp(x)+\vp(y))^2(w(x)-w(y))^2-2w^2(y)(\vp(x)-\vp(y))^2,
\end{align*}
where we used the fact that $ab\ge -\big( \frac{1}{8}a^2+2b^2\big)$
for all $a,b\in \bR$ in the inequality above. Hence,
\begin{align*}I_1\ge&\int_{B_r\times B_r} \vp^2(x)(w(x)-w(y))^2\,J(dx,dy)\\
&-\frac{1}{8}\int_{B_r\times B_r}(\vp(x)+\vp(y))^2(w(x)-w(y))^2\,J(dx,dy)\\
&- 2\int_{B_r\times B_r} w^2(y)(\vp(x)-\vp(y))^2\, J(dx,dy)\\
\ge& \frac 12\int_{B_r\times B_r}
\vp^2(x)(w(x)-w(y))^2\,J(dx,dy)\\
&-8k^2\int_{B_r\times B_r}(\vp(x)-\vp(y))^2\, J(dx,dy)\\
\ge&-8k^2\int_{B_r\times B_r}(\vp(x)-\vp(y))^2\, J(dx,dy) , \end{align*}
where in the second inequality we have used the symmetry property of $J(dx,dy)$ and the fact that $w^2\le 4k^2$ on $B_r$.

On the other hand, by the definition of $w$, it is easy to see that for any $x\in B_r$ and $y\notin B_r$
\begin{align*}
(u(x)-u(y))w(x) &\ge k(u(y)-k)_+-2k{\bf 1}_{\{u(y)\le k\}} (u(x)-u(y))_+\\
&\ge k(u(y)-k)_+-2k (u(x)-u(y))_+,
\end{align*}
and so
\begin{align*}I_2\ge& \int_{B_r\times B_r^c}
k(u(y)-k)_+\vp^2(x)\,J(dx,dy)\\
&-\int_{B_r\times B_r^c}2k(u(x)-u(y))_+\vp^2(x)\,J(dx,dy)\\
=&:I_{21}-I_{22}.\end{align*}
 Furthermore, since $(u(y)-k)_+\ge u_+(y)-k$, we find that
 \begin{align*}I_{21}\ge
 &k\int_{B_r\times B_r^c}u_+(y)\vp^2(x)\,J(dx,dy)-k^2\int_{B_r\times B_r^c}
 \vp^2(x)\,J(dx,dy)\\
 \ge& k\mu(B_{r/2})\inf_{x\in B_{r/2}}\int_{B_r^c}u_+(y)\,J(x,dy)-k^2\int_{B_r\times B_r^c}
 \vp^2(x)\,J(dx,dy)\\
 \ge& {c_1}k{V(x_0,r)}{\T_\phi}\,(u_+;x_0,r)
 -k^2\int_{B_r\times B_r^c}\vp^2(x)\,J(dx,dy),
 \end{align*}
 where in the second inequality we have used the fact that $\varphi=1$ on $B_{r/2}$, and in the last inequality we have used $\J_{\phi,\ge}$ and the fact that for all $x\in B_{r/2}$ and $z\in B_{r}^c$,
 $$\frac{V(x,d(x,z))}{V(x_0, d(x_0,z))}
 \frac{\phi(x,d(x,z))}{\phi(x_0,d(x_0,z))}\le c'\left(1+ \frac{d(x,x_0)}{d(x_0,z)}\right)^{d_2+\beta_2}\le c'',$$
 thanks to $\VD$, \eqref{pvd1} and \eqref{pvd}.
 Also, since $u\ge0$ on $B_R$, we can check that
 \begin{align*}I_{22}\le&  2k
 \int_{B_r\times (B_R\backslash B_r)} k\vp^2(x)\,J(dx,dy)+2k\int_{B_r\times B_R^c}
 (k+u_-(y))\vp^2(x)\,J(dx,dy)\\
 \le &2k^2\int_{B_r\times B_r^c}
 \vp^2(x)\, J(dx,dy)+  c_2k^2 \frac{V(x_0,r)}{\phi(x_0,r)}+ c_2k{V(x_0,r)}{\T_\phi}\, (u_-; x_0,R),\end{align*} where the second term of the last inequality follows from Remark \ref{intelem} and \eqref{pvd}, and in the third term we have used $\J_{\phi,\le}$.

By the estimates for $I_{21}$ and $I_{22}$, we get that
\begin{align*}I_2\ge &-3k^2\int_{B_r\times B_r^c}\vp^2(x)
\, J(dx,dy) +{c_1}k{V(x_0,r)}{\T_\phi}\,(u_+;x_0,r)\\
 &- c_2k^2 \frac{V(x_0,r)}{\phi(x_0,r)}-c_2k{V(x_0,r)}{\T_\phi}\, (u_-; x_0,R).\end{align*} This along with the estimate for $I_1$ yields that
 $${V(x_0,r)}{\T_\phi}\,(u_+; x_0,r)\le c_3\left[k\bigg(\frac{V(x_0,r)}{\phi(x_0,r)}+ \sE(\vp,\vp)\bigg)+ {V(x_0,r)} {\T_\phi}\,(u_-; x_0,R)\right].$$ Then, combining this inequality with  \eqref{e:u01} proves the desired assertion.
 \qed\end{proof}

We also need the following result. Since the proof is essentially the same as that of \cite[Proposition 4.10]{CKW1}, we omit it here.
\begin{proposition} \label{P:mvi2g}
Let $x_0\in M$ and $R>0$.
Assume $\VD$, \eqref{pvd1}, \eqref{pvd}, $\J_{\phi,\le}$, $\FK(\phi)$ and $\CSJ(\phi)$
 hold, and let $u$
be a  bounded
$\sE$-subharmonic
in $B(x_0,R)$.  Then  for any $\delta>0$,
$$
 \esssup_{B(x_0,R/2)} u
 \le  c_1\left[ \left(\frac{(1+\delta^{-1})^{1/\nu}}{V(x_0,R)}\int_{B(x_0,{R})} u^2\,d\mu \right)^{1/2}+\delta\phi(x_0,R)
 {\T_\phi}\, (u; x_0,R/2)
  \right] ,
$$
 where $\nu$ is the constant in $\FK(\phi)$, and $c_1>0$ is a constant independent of
$x_0$, $R$,
$\delta$ and $u$.
\end{proposition}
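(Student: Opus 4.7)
The plan is to run a Moser-type iteration on shrinking concentric balls $B_k := B(x_0, r_k)$ with $r_k := R/2 + R/2^{k+1}$, so that $r_0 = R$ and $r_k \searrow R/2$. By replacing $u$ with $u_+$ (which is still $\sE$-subharmonic in $B(x_0,R)$ since $(\cdot)_+$ is a normal contraction and the form is of pure-jump type), we may assume $u \ge 0$ on $B(x_0,R)$. At each level $k$, for each parameter $a > 0$ I truncate by $v_k := (u - a \sum_{j<k} 2^{-j})_+$, pick a cutoff function $\varphi_k \in \sF_b$ for $B_{k+1} \subset B_k$ provided by $\CSJ(\phi)$, and test $\sE$-subharmonicity of $u$ against the legitimate competitor $v_k \varphi_k^2 \in \sF \cap C_c(B(x_0,R))$.

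The first key step is the local energy estimate. Expanding $\sE(u, v_k \varphi_k^2) \le 0$ and using the standard inequality $ab \le \tfrac14 a^2 + b^2$ on the cross term together with the symmetry of $J$, I expect to arrive at a bound of the form
\begin{equation*}
\int_{B_k \times B_k}(v_k\varphi_k(x) - v_k\varphi_k(y))^2\,J(dx,dy) \le C\int_{B_k\times B_k} v_k^2(y)(\varphi_k(x)-\varphi_k(y))^2 J(dx,dy) + (\text{boundary/tail terms}).
\end{equation*}
The $\CSJ(\phi)$ hypothesis is then used to absorb the cutoff-energy term into $\phi(x_0, R/2^{k+1})^{-1}\int v_k^2\,d\mu$ plus a local jump contribution, while $\J_{\phi,\le}$ together with $\VD$, \eqref{pvd1}, \eqref{pvd} lets me control the tail contribution arising from $(x,y) \in B_k \times B_k^c$ by $\delta\,\phi(x_0, R)\,\T_\phi(u; x_0, R/2)$ (after an Abel-type summation of the annular integrals, which is precisely where the $\delta$ parameter is introduced via a Young-type splitting).

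The second key step converts the energy estimate into an $L^p$-improvement. By $\FK(\phi)$ applied on the support of $v_k\varphi_k$ (using the set where $v_k > 0$ as a candidate open set $D \subset B_k$), one obtains an inequality of the form
\begin{equation*}
\bigl(\text{mean of } v_{k+1}^{2(1+\nu)}\bigr)^{1/(1+\nu)} \le C a_k\,\bigl(\text{mean of } v_k^2\bigr) + \delta\text{-tail},
\end{equation*}
where $a_k$ grows at most geometrically in $k$ because $\phi(x_0, R/2^{k+1})$ scales like $2^{-k\beta_1}$ by \eqref{pvd1}. The third step is the iteration itself: setting $J_k := \bigl(\text{mean over } B_k \text{ of } v_k^2\bigr)^{1/2}$, one derives a recursion $J_{k+1} \le C^k J_k^{1+\nu}$ up to the tail, which converges provided the initial $L^2$ average is sufficiently small; a standard bootstrap (as in De Giorgi--Moser) then gives $\esssup_{B(x_0,R/2)} u_+ \le C\delta\phi(x_0,R)\T_\phi(u;x_0,R/2)$ together with an $L^2$-over-$L^\infty$ term scaled by the factor $(1+\delta^{-1})^{1/(2\nu)}$, which tracks how many iteration steps are needed.

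The main obstacle is the careful bookkeeping of the non-local (tail) terms at each iteration level. Unlike the local case, integrating against $u \varphi_k^2$ produces contributions from $B_k \times B_k^c$ that must be split into (a) a genuinely local piece, absorbed by $\CSJ(\phi)$, and (b) a far-field piece that must telescope into the single tail quantity $\T_\phi(u; x_0, R/2)$. Executing this splitting uniformly in $k$, while keeping the factor $\delta$ as a free small parameter so that the tail does not blow up under iteration, is the delicate part; everything else proceeds in parallel to the proof of \cite[Proposition 4.10]{CKW1}, which is why one only needs to sketch the modifications rather than redo the full iteration.
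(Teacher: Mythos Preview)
Your outline is the De Giorgi iteration of \cite[Proposition 4.10]{CKW1}, which is precisely what the paper invokes (the proof is omitted there with a pointer to that reference), so the approach is the same. One correction: your justification that $u_+$ remains $\sE$-subharmonic ``since $(\cdot)_+$ is a normal contraction'' is wrong---the normal contraction property bounds $\sE(u_+,u_+)$, not $\sE(u_+,\psi)$ for test functions $\psi\ge 0$; the correct reason is that $(\cdot)_+$ is convex and non-decreasing, hence preserves the submartingale property, after which one invokes Theorem~\ref{equ-har} (alternatively, and closer to \cite{CKW1}, skip this replacement entirely and work directly with the truncations $(u-\theta_k)_+$ tested against the subharmonicity of $u$ itself, as in Lemma~\ref{oppo}).
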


 We are in a position to present the main statement in this subsection.

\begin{theorem} Let $B_r(x_0)=B(x_0,r)$ for some $x_0\in M$ and $r>0$.
Assume that $u$ is a bounded and  $\sE$-harmonic
function on $B_R(x_0)$ such
that $u\ge 0$ on $B_R(x_0)$.  Assume that $\VD$, \eqref{pvd1}, \eqref{pvd}, $\J_{\phi}$, $\FK(\phi)$ and $\CSJ(\phi)$,
and $\WEHI(\phi)$ hold.  Then the following estimate holds for any $0<r<\delta_0R$,
$$\esssup_{B_{r/2}(x_0)}u\le c\left(\essinf_{B_r(x_0)}u +{\phi(x_0,r)} {\T_\phi} \, (u_-; x_0,R)\right),$$ where $\delta_0\in (0,1)$ is the constant $\delta$ in $\WEHI(\phi)$ and $c>0$ is a constant independent of $x_0$, $r,$ $R$ and $u$.  This is,
$$\WEHI(\phi)+\J_\phi+\FK(\phi)+\CSJ(\phi)\Longrightarrow \EHI(\phi).$$
\end{theorem}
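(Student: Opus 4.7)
The strategy is a Moser-type iteration combining the $L^2$-mean-value inequality (Proposition \ref{P:mvi2g}), the tail control (Lemma \ref{L:cont}), and the weak Harnack inequality $\WEHI(\phi)$. Since $u$ is bounded and $\sE$-harmonic on $B_R(x_0)$ with $u\ge 0$ on $B_R$, it is in particular $\sE$-subharmonic there, so Proposition \ref{P:mvi2g} applies.

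First I would extend Proposition \ref{P:mvi2g} from its $B_{R/2}$--$B_R$ form to a version valid for arbitrary pairs $r/2 \le s < t \le r$, via a standard $\VD$-covering of $B_s(x_0)$ by balls of radius $(t-s)/2$ centered in $B_s(x_0)$. This should yield, for every $\delta > 0$,
\begin{equation*}
M_s \le c_1 \Big(\tfrac{t}{t-s}\Big)^{d_2/2}(1+\delta^{-1})^{1/(2\nu)} \Big(\tfrac{1}{V(x_0,t)}\int_{B_t} u^2 \, d\mu\Big)^{1/2} + c_1 \delta\, \phi(x_0, t)\, \T_\phi(u; x_0, s),
\end{equation*}
where $M_\rho := \esssup_{B_\rho(x_0)} u$. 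Using the pointwise bound $u^2 \le M_t^{2-\varepsilon} u^\varepsilon$ on $B_t$ and $\WEHI(\phi)$ (applicable since $t \le r < \delta_0 R$), the $L^2$ average on $B_t$ is bounded by $M_t^{1-\varepsilon/2}(c_2 A)^{\varepsilon/2}$, where $A := \essinf_{B_r} u + \phi(x_0, r)\,\T_\phi(u_-; x_0, R)$; here I absorb $\essinf_{B_t} u \le \essinf_{B_r} u$ and $\phi(x_0, t) \asymp \phi(x_0, r)$ via \eqref{pvd1}.

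For the tail, since $u\ge 0$ on $B_R$, $u_-$ vanishes on $B_R$, hence $\T_\phi(u_-; x_0, s) = \T_\phi(u_-; x_0, R)$. Lemma \ref{L:cont} applied at inner radius $s$ gives
\begin{equation*}
\phi(x_0, s)\, \T_\phi(u_+; x_0, s) \le c_3 \big(M_s + \phi(x_0, s)\, \T_\phi(u_-; x_0, R)\big),
\end{equation*}
so using $\T_\phi(u;\cdot) \le \T_\phi(u_+;\cdot) + \T_\phi(u_-;\cdot)$ and \eqref{pvd1} I conclude $\phi(x_0, t)\,\T_\phi(u; x_0, s) \le c_4(M_s + A)$. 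Substituting this into the mean value inequality, choosing $\delta$ small enough to absorb the $M_s$ term into the left-hand side, and then applying Young's inequality in the form $M_t^{1-\varepsilon/2} A^{\varepsilon/2} \le \eta M_t + c(\eta) A$ with $\eta$ chosen so that the resulting coefficient of $M_t$ is at most $1/2$, yields the self-improving recursion
\begin{equation*}
M_s \le \tfrac{1}{2} M_t + c_5 \Big(\tfrac{t}{t-s}\Big)^{\beta} A \quad \text{for all } r/2 \le s < t \le r,
\end{equation*}
for some fixed exponent $\beta > 0$. A standard Bombieri--Giusti iteration lemma applied to $f(\rho) := M_\rho$ on $[r/2, r]$ then gives $M_{r/2} \le c_6 A$, which is precisely $\EHI(\phi)$.

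The main obstacle will be the first step, i.e.\ extending Proposition \ref{P:mvi2g} from the fixed radii pair $(R/2, R)$ to arbitrary pairs $s < t$ via the covering argument. Besides the routine tracking of the $\VD$-factor $(t/(t-s))^{d_2/2}$ coming from the volume ratios, one must show that the individual tail contributions $\phi(y_i, (t-s)/2)\,\T_\phi(u; y_i, (t-s)/4)$ at the covering centers $y_i \in B_s$ can be consolidated into a single tail term $\phi(x_0, t)\,\T_\phi(u; x_0, s)$ centered at $x_0$; this uses \eqref{pvd} to compare $\phi(y_i,\cdot)$ with $\phi(x_0,\cdot)$, together with $\VD$ and a direct estimate showing that the contributions to each inner tail coming from the annulus $B_s \setminus B(y_i, (t-s)/4)$ are absorbed by the $L^2$ average on $B_t$. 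Once this generalized mean value inequality is in hand, the remaining Young/absorption/iteration steps are standard.
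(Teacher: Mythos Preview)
Your overall architecture---mean-value inequality at shifted centers, tail control via Lemma~\ref{L:cont}, Young/absorption, and a Giaquinta--Giusti iteration, capped off by $\WEHI(\phi)$---is exactly the paper's strategy. The one genuine gap is in the tail-consolidation step you flag as the ``main obstacle.''

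You propose to bound each off-center tail $\phi(y_i,\rho)\,\T_\phi(u;y_i,\rho/2)$ (with $\rho\asymp t-s$) by a single tail $\phi(x_0,t)\,\T_\phi(u;x_0,s)$ plus a piece absorbed into the $L^2$-average on $B_t$. This does not work as written. The portion of the $y_i$-tail coming from the annulus $\{t\le d(x_0,z)\lesssim 2s\}$ lies \emph{outside} $B_t$, so it cannot be absorbed into $\int_{B_t}u^2$; and on that annulus one only has $d(y_i,z)\ge t-s$, which makes the kernel at $y_i$ larger than the kernel at $x_0$ by a factor $\asymp (t/(t-s))^{d_2+\beta_2}$. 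Since $u$ does not vanish there, this factor cannot be cancelled, and you end up with terms involving $\esssup_{B_{2t}}u$ and powers of $(t-s)^{-1}$ that do not fit the iteration.

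The fix---and this is precisely what the paper does---is to apply Lemma~\ref{L:cont} at the \emph{shifted} center $z$ (equivalently at each covering center $y_i$) with the \emph{small} inner radius $\rho\asymp t-s$, rather than once at $x_0$ with radius $s$. This converts $\phi(z,\rho)\,\T_\phi(u_+;z,\rho)$ into $c\big(\esssup_{B_\rho(z)}u+\phi(z,\rho)\,\T_\phi(u_-;z,R')\big)$ with $R'=R-r$. The first term is $\le M_t$ since $B_\rho(z)\subset B_t(x_0)$. For the second term the key point is that $u_-\equiv 0$ on $B_R(x_0)$, so the integral defining $\T_\phi(u_-;z,R')$ runs only over $B_R(x_0)^c$, where $d(z,\cdot)\asymp d(x_0,\cdot)$ with constants depending only on $\delta_0$; hence $\phi(z,\rho)\,\T_\phi(u_-;z,R')\le c\,\phi(x_0,r)\,\T_\phi(u_-;x_0,R)$. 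With this correction, your remaining steps (choose $\delta$ to absorb the $M_t$-term, Young's inequality, iteration via Lemma~\ref{iter}, then $\WEHI(\phi)$ with $q=\varepsilon$) go through and agree with the paper.
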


\begin{proof} Note that $u_+$ is a bounded and $\sE$-subharmonic function on $B_R(x_0)$.
According to Proposition \ref{P:mvi2g}, for any $0<\delta<1$ and $0<\rho<R$,
$$\esssup_{B_{\rho/2}(x_0)}u\le c_1\left[\delta \phi(x_0,\rho){\T_\phi}\,(u_+;x_0, \rho/2)+ \delta^{-1/(2\nu)} \left(\frac{1}{V(x_0,\rho)}\int_{B_\rho(x_0)}u_+^2\,d\mu\right)^{1/2}\right],$$ where $c_1>0$ is a constant independent of $x_0$, $\rho$, $u$ and $\delta$.
The inequality above along with Lemma \ref{L:cont} yields that
\begin{align*}
\esssup_{B_{\rho/2}(x_0)}u \le c_2  \bigg(
 &\delta^{-1/(2\nu)} \left(\frac{1}{V(x_0,\rho)}\int_{B_\rho(x_0)}u_+^2\,d\mu\right)^{1/2} \\
&+\delta \esssup_{B_\rho(x_0)}u+ \delta {\phi(x_0,\rho)} {\T_\phi}\,(u_-;x_0, R) \bigg).
\end{align*}

For any $1/2\le \sigma'\le \sigma\le 1$ and $z\in B_{\sigma'r}(x_0)$, applying the inequality above with $B_{\rho}(x_0)= B_{(\sigma-\sigma')r}(z)$, we get that there is a constant $c_3>1$ such that
\begin{align*}u(z)\le
c_3\bigg( &\frac{\delta^{-1/(2\nu)}}{(\sigma-\sigma')^{d_2/2}}
\left(\frac{1}{V(x_0, \sigma r)}\int_{B_{\sigma r}(x_0)} u^2\,d\mu\right)^{1/2} \\
&+\delta \esssup_{B_{\sigma r}(x_0)}u+ \delta {\phi(x_0,r)} {\T_\phi}\,(u_-;x_0, R)\bigg),
\end{align*}
 where we have used the facts that
$B_{(\sigma-\sigma')r}(z)\subset B_{\sigma r}(x_0)$ for any $z\in B_{\sigma 'r}(x_0)$, and
$$
\frac{V(x_0,\sigma r)}{V(z,(\sigma -\sigma')r)}\le c'\left(1+\frac{d(x_0,z)+\sigma r}{(\sigma-\sigma')r}\right)^{d_2}\le c''\left(1+\frac{\sigma r+\sigma'r}{(\sigma-\sigma')r}\right)^{d_2}\le \frac{c'''}{(\sigma-\sigma')^{d_2}},$$ thanks to $\VD$.
Therefore,
\begin{align*}
\esssup_{B_{\sigma'r}(x_0)}u \le
c_3\bigg(  &\frac{\delta^{-1/(2\nu)}}{(\sigma-\sigma')^{d_2/2}}
\left(\frac{1}{V(x_0, \sigma r)}\int_{B_{\sigma r}(x_0)} u^2\,d\mu\right)^{1/2}\\
& +\delta \esssup_{B_{\sigma r}(x_0)}u+ \delta {\phi(x_0,r)} {\T_\phi}\,(u_-;x_0, R)\bigg).
\end{align*}
In particular, choosing $\delta=\frac{1}{4c_3}$ in the inequality above, we arrive at
\begin{align*}\esssup_{B_{\sigma'r}(x_0)}u \le &\frac{1}{4}\esssup_{B_{\sigma r}(x_0)}u\\
&+\frac{c_4}{(\sigma-\sigma')^{d_2/2}}\left(\frac{1}{V(x_0, \sigma r)}\int_{B_{\sigma r}(x_0)} u^2\,d\mu\right)^{1/2}+c_4 {\phi(x_0,r)} {\T_\phi}\,(u_-;x_0, R).\end{align*}
Since
\begin{align*}
&
\frac{c_4}{(\sigma-\sigma')^{d_2/2}}\left(\frac{1}{V(x_0, \sigma r)}\int_{B_{\sigma r}(x_0)} u^2\,d\mu\right)^{1/2}\\
&\le \frac{c_4}{(\sigma-\sigma')^{d_2/2}}\frac{(\esssup_{B_{\sigma r}(x_0)}u)^{(2-q)/2}}{V(x_0,\sigma r)^{1/2}}
\Big(\int_{B_{\sigma r}(x_0)}
|u|^q\,d\mu\Big)^{1/2}\\
&\le\frac 14 \esssup_{B_{\sigma r}(x_0)}u+
\frac{c_4'}{(\sigma-\sigma')^{d_2/q}}\Big(
\frac{1}{V(x_0,\sigma r)}\int_{B_{\sigma r}(x_0)}
|u|^q\,d\mu \Big)^{1/q},
\end{align*}
where in the last inequality we applied
the standard Young
inequality with exponent $2/q$ and $2/(2-q)$ with any $0<q<2$, we
have for any $0< q<2$ and $1/2\le \sigma'\le \sigma\le 1$,
\begin{align*} \esssup_{B_{\sigma 'r}(x_0)}u &\le  \frac{1}{2} \esssup_{B_{\sigma r}(x_0)}u\\
&\quad+
\frac{c_5}{(\sigma -\sigma')^{d_2/q}}
\bigg[\left(\frac{1}{V(x_0,\sigma r)}\int_{B_{\sigma r}(x_0)}
u^q\,d\mu\right)^{1/q} +{\phi(x_0,r)} {\T_\phi}\,(u_-;x_0, R)\bigg]\\
&\le \frac{1}{2} \esssup_{B_{\sigma r}(x_0)}u\\
&\quad+
\frac{c_5'}{(\sigma -\sigma')^{d_2/q}}
\bigg[\left(\frac{1}{V(x_0, r)}\int_{B_{r}(x_0)}
u^q\,d\mu\right)^{1/q} +{\phi(x_0,r)} {\T_\phi}\,(u_-;x_0, R)\bigg].\end{align*}
According to Lemma \ref{iter} below, we find that
$$\esssup_{B_{r/2}(x_0)}u \le c_6\bigg[\left(\frac{1}{V(x_0,r)}\int_{B_{r}(x_0)}
u^q\,d\mu\right)^{1/q} +{\phi(x_0,r)} {\T_\phi}\,(u_-;x_0, R)\bigg].$$ To conclude the proof, we combine the above inequality with $\WEHI(\phi)$ and Theorem \ref{equ-har}, by setting $q= \eps$. \qed
\end{proof}

The following lemma is taken from \cite[Lemma 1.1]{GG}, which has been used in the proof above.

\begin{lemma}\label{iter} Let $f(t)$ be a non-negative bounded function defined for $0\le T_0\le t\le T_1$. Suppose that for $T_0\le t\le s\le T_1$ we have
$$f(t)\le A(s-t)^{-\alpha}+B+\theta f(s),$$ where $A,B,\alpha,\theta$ are non-negative constants, and $\theta<1$. Then there exists a positive constant $c$ depending only on $\alpha$ and
$\theta$ such that for every $T_0\le r\le R\le T_1$, we have
$$f(r)\le c\Big( A(R-r)^{-\alpha}+B\Big).$$ \end{lemma}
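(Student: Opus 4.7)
The plan is to prove this by a standard iteration (also called ``hole-filling'') argument: apply the one-step inequality repeatedly along a carefully chosen geometric sequence of points in $[r,R]$, and use boundedness of $f$ to pass to the limit.

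First I would fix a parameter $\tau\in(0,1)$, to be chosen at the end, and define an increasing sequence $\{t_i\}_{i\ge 0}\subset[r,R]$ by
\[
t_0=r,\qquad t_{i+1}-t_i=(1-\tau)\tau^i(R-r).
\]
Then $t_i\uparrow R$ as $i\to\infty$, and $(t_{i+1}-t_i)^{-\alpha}=(1-\tau)^{-\alpha}\tau^{-i\alpha}(R-r)^{-\alpha}$. Applying the hypothesis with $t=t_i$ and $s=t_{i+1}$ gives
\[
f(t_i)\le A(1-\tau)^{-\alpha}\tau^{-i\alpha}(R-r)^{-\alpha}+B+\theta f(t_{i+1}).
\]
Iterating this bound $k$ times starting from $i=0$ yields
\[
f(r)\le A(1-\tau)^{-\alpha}(R-r)^{-\alpha}\sum_{i=0}^{k-1}(\theta\tau^{-\alpha})^{i}+B\sum_{i=0}^{k-1}\theta^{i}+\theta^{k}f(t_k).
\]

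Next I would choose $\tau\in(0,1)$ so that $\theta\tau^{-\alpha}<1$; since $\theta<1$ and $\alpha\ge 0$, any $\tau\in(\theta^{1/\alpha},1)$ works (if $\alpha=0$ the first series is simply $\sum\theta^{i}$, which already converges). With this choice both geometric series are summable, with sums bounded by constants depending only on $\alpha$ and $\theta$. Using the assumption that $f$ is bounded on $[T_0,T_1]$, the remainder term $\theta^{k}f(t_k)$ tends to zero as $k\to\infty$, since $\theta<1$. Letting $k\to\infty$ gives
\[
f(r)\le c\bigl(A(R-r)^{-\alpha}+B\bigr),
\]
with $c=c(\alpha,\theta)$, as required.

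I do not expect any real obstacle here; the only mild subtlety is choosing $\tau$ close enough to $1$ to ensure $\theta\tau^{-\alpha}<1$ while keeping $(1-\tau)^{-\alpha}$ finite, and observing that the absorption argument relies crucially on $f$ being bounded (so that $\theta^{k}f(t_k)\to 0$), which is exactly the hypothesis assumed. The dependence of $c$ on $\alpha$ and $\theta$ alone is automatic from the explicit geometric sums.
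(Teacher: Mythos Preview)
Your proof is correct and is exactly the standard iteration argument; the paper does not give its own proof but merely cites \cite[Lemma 1.1]{GG}, whose proof is precisely the geometric-sequence iteration you wrote out.
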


\section{Sufficient condition for $\WEHI^+(\phi)$}\label{S:3}

In this section, we will establish the following, which
gives a sufficient condition for $\WEHI^+(\phi)$.

\begin{theorem}\label{P:essuper}
Assume that $\VD$, \eqref{pvd1}, \eqref{pvd}, $\J_{\phi,\le}$, $\FK(\phi)$, $\PI(\phi)$ and  $\CSJ(\phi)$
hold. Then, $\WEHI^+(\phi)$ holds. More precisely,
there exist
constants $\varepsilon\in (0,1)$ and $c\ge1$ such that for all $x_0\in M$,
$0<r<R/ (60 \kappa)$ and any bounded  $\sE$-superharmonic function $u$ on $B_R:=B(x_0,R)$ with $u\ge 0$ on $B_R$,
\[
\left(\frac{1}{\mu(B_r)}\int_{B_r} u^\varepsilon
\,d\mu\right)^{1/\varepsilon}\le
c\Big(\essinf_{B_r}u+\phi(x_0,r){\T_\phi}\, (u_-;
x_0,R)\Big),
\]where $\kappa\ge 1$ is the constant in $\PI(\phi)$ and $B_r=B(x_0,r)$.

\end{theorem}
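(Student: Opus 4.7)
\proof
The plan is to establish $\WEHI^+(\phi)$ via a Moser-type iteration scheme adapted to non-local symmetric Dirichlet forms. The first step is to normalize away the tail contribution. For an auxiliary $\delta>0$ set
\[
\bar u := u + \delta + \eta, \qquad \eta := \phi(x_0,r)\,\T_\phi(u_-;x_0,R),
\]
so that $\bar u$ is $\sE$-superharmonic on $B_R$ (Theorem~\ref{equ-har} allows us to switch between the probabilistic and analytic notions), $\bar u\geq \delta>0$, and one checks directly from the definition of $\T_\phi$ together with \eqref{pvd1} and \eqref{pvd} that $\phi(x_0,r)\T_\phi(\bar u_-;x_0,R)\leq c\,\bar u$ pointwise on $B_r$. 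It suffices to prove
\[
\Bigl(\frac{1}{\mu(B_r)}\int_{B_r}\bar u^{\varepsilon}\,d\mu\Bigr)^{1/\varepsilon}\leq c\,\essinf_{B_r}\bar u,
\]
and then let $\delta\downarrow 0$.

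The argument now rests on two classical ingredients. First, a reverse Moser inequality for \emph{negative} powers: for each fixed $q>0$ and any cutoff $\vp$ for $B_{\rho'}\subset B_\rho\subset B_{r/2}$ coming from $\CSJ(\phi)$, test the superharmonicity relation against $\vp^{2}\bar u^{-q-1}$. Using the pointwise algebraic bound $(a-b)(a^{-q-1}-b^{-q-1})\geq c_q (a^{-q/2}-b^{-q/2})^{2}$ for $a,b>0$ to extract a quadratic form in $w:=\bar u^{-q/2}$, applying $\CSJ(\phi)$ to absorb the cutoff energy, and using $\J_{\phi,\le}$ together with the tail-absorption property of $\bar u$ to control the jumps from $B_\rho$ into $B_R^c$, one obtains a Caccioppoli-type inequality
\[
\int_{B_{\rho'}\times B_{\rho'}}(w(x)-w(y))^{2}\,J(dx,dy)\leq \frac{c}{\phi(x_0,\rho-\rho')}\int_{B_\rho}w^{2}\,d\mu.
\]
Feeding this into $\FK(\phi)$ and performing the standard Moser iteration on $w$ over a shrinking sequence of balls yields
\[
\essinf_{B_{r/2}}\bar u\;\geq\; c\Bigl(\frac{1}{V(x_0,r)}\int_{B_r}\bar u^{-q}\,d\mu\Bigr)^{-1/q}.
\]
Second, a logarithmic estimate: testing superharmonicity against $\vp^{2}\bar u^{-1}$, the elementary inequality $(a-b)(a^{-1}-b^{-1})\leq -(\log a-\log b)^{2}$ produces
\[
\int_{B_{\kappa r}\times B_{\kappa r}}(\log \bar u(x)-\log \bar u(y))^{2}\,J(dx,dy)\leq \frac{c\,V(x_0,r)}{\phi(x_0,r)},
\]
so $\PI(\phi)$ gives $\int_{B_r}(\log\bar u-\overline{\log\bar u}_{B_r})^{2}d\mu\leq c\,V(x_0,r)$, and the usual John–Nirenberg argument (iterated via a Whitney decomposition) upgrades this to the exponential integrability
\[
\frac{1}{\mu(B_r)}\int_{B_r}e^{\alpha|\log\bar u-c_0|}\,d\mu\leq C
\]
for some median constant $c_0$ and small $\alpha>0$.

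Combining these via the Bombieri–Giusti abstract lemma bridges the gap between negative and positive moments: the family of reverse Moser estimates (for every small $q>0$) together with the two-sided exponential integrability of $\log\bar u$ forces the existence of $\varepsilon\in(0,\alpha)$ with $\bigl(V(x_0,r)^{-1}\int_{B_r}\bar u^{\varepsilon}\bigr)^{1/\varepsilon}\leq c\,\bigl(V(x_0,r)^{-1}\int_{B_r}\bar u^{-q}\bigr)^{-1/q}$, and composing with the reverse Moser step closes the argument. Letting $\delta\to 0$ and unpacking $\bar u=u+\eta$ delivers the stated $\WEHI^{+}(\phi)$. The main obstacle I expect is the careful bookkeeping in the Caccioppoli step: because $\bar u^{-q-1}$ is unbounded near $\{\bar u=\delta\}$ and the test function is only compactly supported in $B_R$ (not in $B_r$), every cross term of the form $\int_{B_\rho\times B_\rho^c}$ must be split according to whether one lies in $B_R$ or beyond, and the contribution from $B_R^c$ has to be swallowed by the tail term $\eta$ baked into $\bar u$. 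Verifying that $\CSJ(\phi)$ supplies exactly the right annular control of the cutoff energy to match the scale $\phi(x_0,\rho-\rho')$ appearing in the Faber–Krahn iteration is the most delicate piece and is where the state-dependence of $\phi$ via \eqref{pvd} must be invoked.
\qed
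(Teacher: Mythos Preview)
Your outline is a valid strategy, but it follows a genuinely different route from the paper. The paper proves Theorem~\ref{P:essuper} via a De~Giorgi--Di~Benedetto--Trudinger scheme: a logarithmic estimate from $\PI(\phi)$ (Lemma~\ref{C:PIandlog}), a measure--decay lemma (Lemma~\ref{L:EHIone}), an \emph{expansion of positivity} result (Proposition~\ref{P:infsuph}) obtained by a level--set iteration driven by $\FK(\phi)$ and $\CSJ(\phi)$ (Lemma~\ref{oppo}), and finally a Krylov--Safonov covering argument (Lemma~\ref{L:KS}) that converts expansion of positivity into the full $\WEHI^{+}(\phi)$. No Moser iteration on negative powers, no John--Nirenberg, and no Bombieri--Giusti lemma appear.

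Your route is the Moser--Bombieri--Giusti alternative: reverse Moser iteration on $\bar u^{-q/2}$ via $\FK(\phi)+\CSJ(\phi)$, a BMO bound on $\log\bar u$ from $\PI(\phi)$, John--Nirenberg to upgrade BMO to exponential integrability, and then the abstract Bombieri--Giusti lemma to cross from negative to small positive exponents. The two schemes share the logarithmic estimate and the use of $\FK(\phi)+\CSJ(\phi)$ as the engine, but package the passage from ``infimum'' to ``$L^{\varepsilon}$-average'' very differently. The paper's choice avoids invoking John--Nirenberg on a possibly non-geodesic doubling space and keeps everything at the level of measure--theoretic covering; your choice is closer to the classical elliptic template and is more modular once the negative-power Caccioppoli inequality is secured. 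Both are known to work in the non-local setting, and your tail normalization $\bar u=u+\delta+\eta$ is exactly the right device; the point you flag about splitting the $B_\rho\times B_R^{c}$ cross terms and absorbing them into $\eta$ is indeed the crux of the Caccioppoli step and is handled in the paper not inside Theorem~\ref{P:essuper} itself but in the auxiliary Lemma~\ref{oppo}.
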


Throughout this section, we always assume that $\mu$ and $\phi$ satisfy $\VD$, \eqref{pvd1} and \eqref{pvd}, respectively. To prove Theorem \ref{P:essuper} we mainly follow \cite{CKP2}, which
is originally due to \cite{DT}. Since we essentially make use of
$\CSJ(\phi)$, some nontrivial modifications are required. We begin with the following
result, which easily follows from \cite[Corollary 4.12]{CKW2}.

\begin{lemma}\label{C:PIandlog} Let $B_r=B(x_0,r)$ for some $x_0\in M$ and $r>0$.
Assume that $u$ is a bounded and $\sE$-superharmonic function on
$B_R$ such that $u\ge 0$ on $B_R$.  For any $a,l>0$ and $b>1$,
define
$$v= \Big[\log \Big(\frac{a+l}{u+l}\Big)\Big]_+ \wedge \log b.$$ If $\VD$, \eqref{pvd1}, \eqref{pvd},  $\J_{\phi, \le}$, $\PI(\phi)$ and $\CSJ(\phi)$ hold, then for
any $l>0$ and
$0<r\le R/(2 \kappa)$,
$$\frac{1}{V(x_0,r)}\int_{B_r} (v-\ol v_{B_r})^2 d\mu\le  c_1\bigg(1+  \frac{\phi(x_0,r){\T_\phi}\,(u_-; x_0,R)}{l}\bigg),$$
 where $\kappa\ge 1$ is the constant in $\PI(\phi)$, $\ol v_{B_r}= \frac{1}{\mu(B_r)}\int_{B_r}v\,d\mu$ and $c_1$ is a constant independent of $u$, $x_0$, $r$, $R$ and $l$.
\end{lemma}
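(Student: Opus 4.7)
The plan is to deduce the lemma directly from $\PI(\phi)$ combined with the Caccioppoli-type log-energy estimate \cite[Corollary 4.12]{CKW2}. First note that, since $u$ is bounded and $u+l\ge l>0$ on $B_R$, the truncation
\[
v=\bigl[\log((a+l)/(u+l))\bigr]_+\wedge\log b
\]
is a bounded Borel function with $0\le v\le\log b$, and may be realized as an element of $\sF_b$ after localization inside $B_R$ (the composition of $u\in\sF^{loc}_{B_R}$ with the Lipschitz function $s\mapsto[\log((a+l)/(s+l))]_+\wedge\log b$ lies in $\sF^{loc}_{B_R}$, and the boundedness of $v$ lets us truncate without affecting the $\sE$-energy).

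Since $r\le R/(2\kappa)$ implies $B_{\kappa r}\subset B_{R/2}\subset B_R$, I would apply $\PI(\phi)$ to $v$ on the ball $B_r$ to obtain
\[
\int_{B_r}(v-\bar v_{B_r})^2\,d\mu\le C\,\phi(x_0,r)\int_{B_{\kappa r}\times B_{\kappa r}}(v(y)-v(x))^2\,J(dx,dy).
\]
The key ingredient is then \cite[Corollary 4.12]{CKW2}, which (under $\VD$, \eqref{pvd1}, $\J_{\phi,\le}$ and $\CSJ(\phi)$) provides the non-local log-Caccioppoli estimate
\[
\int_{B_{\kappa r}\times B_{\kappa r}}(v(y)-v(x))^2\,J(dx,dy)\le c_2\,\frac{V(x_0,r)}{\phi(x_0,r)}\left(1+\frac{\phi(x_0,r)\,\T_\phi(u_-;x_0,R)}{l}\right).
\]
This estimate is derived by testing the $\sE$-superharmonicity $\sE(u,\psi^2/(u+l))\ge 0$ against a cutoff $\psi\in\sF_b$ for $B_{\kappa r}\subset B_R$: the local (diagonal) part is absorbed by $\CSJ(\phi)$ applied to $\psi$, while the non-local part coming from jumps into $B_R^c$ contributes the $\T_\phi(u_-;x_0,R)/l$ tail via $\J_{\phi,\le}$ and $\VD$. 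Substituting the energy bound into the Poincar\'e inequality and dividing by $V(x_0,r)$ produces the stated variance estimate, with the $\phi(x_0,r)$ factors canceling.

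The only delicate point worth checking, and the likely obstacle, is that \cite[Corollary 4.12]{CKW2} is written for a state-independent scale function $\phi(r)$, whereas here $\phi(x,r)$ is state-dependent. The transfer is routine: all integrals are localized to scales comparable to $r$ around $x_0$, so by \eqref{pvd} one has $\phi(y,s)\asymp\phi(x_0,s)$ uniformly for $d(y,x_0)\le s\lesssim r$ and $s\lesssim r$, and by $\VD$ volumes of balls at comparable scales around nearby points are comparable. These replacements cost only universal multiplicative constants, absorbed into $c_1$, so every step of the proof of \cite[Corollary 4.12]{CKW2} — in particular, the application of $\CSJ(\phi)$ to the cutoff $\psi$ and the use of $\J_{\phi,\le}$ to control jumps to $B_R^c$ — goes through verbatim with the state-dependent $\phi(x_0,\cdot)$.
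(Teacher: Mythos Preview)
Your proposal is correct and follows exactly the route the paper indicates: the paper gives no detailed proof, stating only that the lemma ``easily follows from \cite[Corollary 4.12]{CKW2}'', and your argument spells this out by combining $\PI(\phi)$ with the log-Caccioppoli estimate from that reference. Your remark on adapting the state-independent $\phi(r)$ of \cite{CKW2} to the state-dependent $\phi(x,r)$ via \eqref{pvd} is precisely the routine check needed.
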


\begin{lemma}\label{L:EHIone} Let $B_r=B(x_0,r)$ for some $x_0\in M$ and $r>0$.
Assume that $u$ is a bounded and  $\sE$-superharmonic function on $B_R$ such that $u\ge 0$ on $B_R$. Assume that $\VD$, \eqref{pvd1}, \eqref{pvd},  $\J_{\phi,\le}$, $\FK(\phi)$, $\PI(\phi)$ and $\CSJ(\phi)$
hold.
Suppose that there exist constants $\lambda>0$ and $\sigma\in (0,1]$ such that
\begin{equation}\label{ee:cond1}\mu(B_r\cap \{u\ge \lambda\})\ge \sigma\mu(B_r)\end{equation} for
some $r$ with
$0< r<R/(12\kappa)$,
where $\kappa\ge 1$ is the constant in
$\PI(\phi)$. Then there exists a constant $c_1>0$ such that
$$\frac{\mu(B_{6r}\cap \{u\le 2\delta \lambda -\frac{1}{2}{\phi(x_0,r)}{\T_\phi}\, (u_-; x_0,R)\})}{\mu(B_{6r})}\le \frac{c_1}{\sigma \log\frac{1}{2\delta}}$$
holds for all $\delta\in(0,1/4)$, where $c_1$ is a constant
independent of $u$, $x_0$, $r$, $R$, $\sigma$, $\lambda$ and
$\delta$.
\end{lemma}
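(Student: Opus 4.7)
The plan is to apply the logarithmic Caccioppoli-type estimate of Lemma~\ref{C:PIandlog} to a suitable truncated log transform of $u$ on the ball $B_{6r}$, and then to read off the measure bound from a Chebyshev/Cauchy--Schwarz argument driven by the hypothesis \eqref{ee:cond1}. Abbreviate $T := \phi(x_0,r)\T_\phi(u_-; x_0, R)$, set $b := 1/(2\delta) > 1$, and choose the shift $l := T/2 + \eps$ with $\eps > 0$ (to be sent to $0$ at the end, so as to cover the degenerate case $T = 0$). Define the test function
\[
v := \Big[\log\frac{\lambda + l}{u + l}\Big]_+ \wedge \log b,
\]
which takes values in $[0, \log b]$. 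Since the hypothesis $r < R/(12\kappa)$ gives $6r \le R/(2\kappa)$, I may invoke Lemma~\ref{C:PIandlog} on the ball $B_{6r}$; using $l \ge T/2$ and \eqref{pvd1} to control $\phi(x_0,6r)/\phi(x_0,r) \le c_2\cdot 6^{\beta_2}$, the right-hand side is bounded by a universal constant, so
\[
\frac{1}{V(x_0,6r)}\int_{B_{6r}}\big(v - \bar v_{B_{6r}}\big)^2\,d\mu \le C.
\]

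Next I would locate the extreme values of $v$. On $B_r\cap\{u\ge\lambda\}$ one has $(\lambda+l)/(u+l)\le 1$, so $v = 0$; the hypothesis \eqref{ee:cond1} together with $\VD$ therefore gives $\mu(E) \ge \sigma V(x_0,r) \ge \sigma' V(x_0,6r)$ for $E := B_{6r}\cap\{v=0\}$, where $\sigma' = \sigma/c_\ast$ with $c_\ast$ depending only on the doubling constant. On the other hand, on the set $A := B_{6r}\cap\{u \le 2\delta\lambda - T/2\}$ we have $u + l \le 2\delta\lambda + \eps$, and hence
\[
\frac{\lambda+l}{u+l} \ge \frac{\lambda + T/2}{2\delta\lambda + \eps} \ge \frac{1}{2\delta} = b
\]
for $\eps$ small enough (using $\delta\in(0,1/4)$ and $\lambda>0$), which forces $v = \log b$ on $A$.

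To conclude, I would use that $v = 0$ on $E$ and $\bar v_{B_{6r}}\ge 0$, together with Cauchy--Schwarz, to estimate
\[
\bar v_{B_{6r}}\cdot\mu(E) = \int_E |v - \bar v_{B_{6r}}|\,d\mu \le V(x_0,6r)^{1/2}\Big(\int_{B_{6r}}(v-\bar v_{B_{6r}})^2\,d\mu\Big)^{1/2} \le \sqrt{C}\,V(x_0,6r),
\]
which yields $\bar v_{B_{6r}} \le \sqrt{C}/\sigma'$. Finally, since $v = \log b$ on $A$ and $v\ge 0$ everywhere,
\[
\log b\cdot\mu(A) \le \int_A v\,d\mu \le \int_{B_{6r}} v\,d\mu = \bar v_{B_{6r}}V(x_0,6r) \le \frac{\sqrt{C}}{\sigma'}V(x_0,6r),
\]
so $\mu(A)/V(x_0,6r) \le c_1/(\sigma\log(1/(2\delta)))$; letting $\eps\to 0$ proves the claim. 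The main technical point is calibrating the shift $l$: it must simultaneously be small enough that $\{u\le 2\delta\lambda - T/2\}\subset\{v = \log b\}$, and comparable to $T$ so that the tail term $\phi(x_0,6r)\T_\phi(u_-;x_0,R)/l$ in Lemma~\ref{C:PIandlog} stays bounded by a universal constant---both requirements are met by $l\asymp T/2$, with the $\eps$ regularization absorbing the case $T=0$.
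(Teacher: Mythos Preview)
Your proof is correct and follows essentially the same approach as the paper's: apply Lemma~\ref{C:PIandlog} on $B_{6r}$ with $l$ comparable to $T/2$, locate $v=0$ on $B_r\cap\{u\ge\lambda\}$ and $v=\log b$ on the target set, and finish with a Cauchy--Schwarz/averaging argument (the paper bounds $\log b-\bar v_{B_{6r}}$ first rather than $\bar v_{B_{6r}}$ directly, but this is cosmetic). One small slip: your displayed inequality $\frac{\lambda+T/2}{2\delta\lambda+\eps}\ge\frac{1}{2\delta}$ is equivalent to $\delta T\ge\eps$, so it fails for every $\eps>0$ when $T=0$ and your $\eps$-regularization does not handle that case as claimed; the easy repair is to argue instead on the exact level set $\{u+l\le 2\delta(\lambda+l)\}$ (where $v=\log b$ holds by definition) and then pass to the limit $\eps\downarrow 0$---the paper, which simply takes $l=T/2$, glosses over the same $l=0$ edge case.
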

\begin{proof}Taking $l=\frac{1}{2}{\phi(x_0,r)}{\T_\phi}\, (u_-; x_0,R),$ $a=\lambda$ with $\lambda>0$,
and $b=\frac{1}{2\delta}$ with $\delta\in (0,1/4)$ in Lemma
\ref{C:PIandlog}, we get that for all $\lambda>0$ and
$0<r<R/(12\kappa)$,
\begin{equation}\label{e:pie}\frac{1}{V(x_0,6r)}\int_{B_{6r}}|v-\ol v_{B_{6r}}|\,d\mu\le
\left(\frac{1}{V(x_0,6r)}\int_{B_{6r}}
(v-\ol v_{B_{6r}})^2\,d\mu\right)^{1/2}\le c_1,\end{equation} where
$$v=\min\left\{\left[\log\Big(\frac{\lambda+l}{u+l}\Big)\right]_+,\,\,\log\frac{1}{2\delta}\right\}.$$

Notice that by the definition of $v$, we have $\{v=0\}=\{u\ge
\lambda\}.$ Hence, by \eqref{ee:cond1} and $\VD$, for some $r$ with
$0<r<R/(12\kappa)$,
$$\mu(B_{6r}\cap\{v=0\})\ge c'\sigma \mu(B_{6r})$$ and so
\begin{align*}\log\frac{1}{2\delta}=&\frac{1}{\mu(B_{6r}\cap\{v=0\})}\int_{B_{6r}\cap\{v=0\}}\left(\log\frac{1}{2\delta}-v\right)\,d\mu\\
\le&\frac{1}{c'\sigma} \frac{1}{\mu(B_{6r})}\int_{B_{6r}}\left(\log\frac{1}{2\delta}-v\right)\,d\mu\\
\le&\frac{1}{c'\sigma} \left(\log\frac{1}{2\delta}-\ol v_{B_{6r}}\right).\end{align*}Thus, integrating the previous inequality over
$B_{6r}\cap\{v=\log \frac{1}{2\delta}\}$, we obtain
\begin{align*}\Big(\log\frac{1}{2\delta}\Big)\mu\left(B_{6r}\cap\Big\{v=\log \frac{1}{2\delta}\Big\}\right)\le&\frac{1}{c'\sigma} \int_{B_{6r}\cap\{v=\log \frac{1}{2\delta}\}} \left(\log\frac{1}{2\sigma}-\ol v_{B_{6r}}\right)\,d\mu\\
\le& \frac{1}{c'\sigma} \int_{B_{6r}}|v-\ol v_{B_{6r}}|\,d\mu\\
\le& \frac{c_2}{\sigma} V(x_0,6r),\end{align*} where in the last
inequality we have used \eqref{e:pie}. Therefore, for all
$0<\delta<1/4$,
$$\mu(B_{6r}\cap \{u+l\le 2\delta(\lambda+l)\})\le \frac{c_2}{\sigma \log \frac{1}{2\delta} } V(x_0,6r),$$ which proves the desired assertion.  \qed \end{proof}

For any $x\in M$ and $r>0$, set
$B_r(x)=B(x,r)$. For a ball $B \subset M $
and a function $w$ on $B$, write
$$ I(w,B) = \int_B w^2 \, d\mu. $$
The following lemma can be proved similarly as the of \cite[Lemma 4.8]{CKW1}.

\begin{lemma} \label{oppo}
Suppose $\VD$, \eqref{pvd1}, \eqref{pvd}, $\J_{\phi,\le}$, $\FK(\phi)$ and
$\CSJ(\phi)$ hold. For $x_0 \in M$, $R, {r_1},{r_2}>0$  with
 ${r_1}\in [\frac12 R,R]$ and ${r_1}+{r_2}\le R$, let $u$ be an $\sE$-subharmonic
function on $B_R(x_0)$, and $v =
(u-\theta)_+$ for some $\theta>0$. Set $ I_0 = I(u,B_{{r_1}+{r_2}}(x_0))$ and $ I_1
= I(v,B_{{r_1}}(x_0))$. We have
$$
 I_1\le
\frac{ c_1 }{ \theta^{2\nu} V(x_0,R)^\nu} I_0^{1+\nu}
\left(1+\frac{{r_1}}{{r_2}}\right)^{\beta_2} \left[ 1+
\left(1+\frac{{r_1}}{{r_2}}\right)^{d_2+\beta_2-\beta_1}
\frac{\phi(x_0,R){\T_\phi}\,(u; x_0,R/2)}{\theta} \right],
$$
where $\nu$ is the constant in
$\FK(\phi)$, $d_2$ is the constant in
\eqref{e:vd2}, $\beta_1$ and $\beta_2$ are the constants in \eqref{pvd1}, and $c_1$ is a constant independent of $\theta, x_0, R, {r_1}$,
${r_2}$ and $u$.
\end{lemma}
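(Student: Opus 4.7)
The plan is to follow the De Giorgi--Moser iteration scheme from \cite[Lemma 4.8]{CKW1}, with the constants tracked carefully in the state-dependent setting. The key input is that $v := (u-\theta)_+$ satisfies $(u(x)-u(y))(v(x)-v(y)) \ge (v(x)-v(y))^2$ pointwise, so that testing the $\sE$-subharmonicity of $u$ against $v\varphi^2$ will simultaneously yield an energy bound for $v\varphi$ and set up a direct application of $\FK(\phi)$.

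First I would apply $\CSJ(\phi)$ with its pair $(R,r)$ taken as $(r_1,r_2)$ (shrinking $r_2$ by the factor $(1+C_0)$ is absorbed into the final constant) to obtain a cutoff $\varphi \in \sF_b$ with $\varphi \equiv 1$ on $B_{r_1}(x_0)$, $\varphi \equiv 0$ outside $B^*:=B_{r_1+r_2}(x_0)$, and the accompanying cutoff Sobolev estimate. Writing $\sE(u, v\varphi^2) \le 0$ and splitting the double integral into the diagonal block $B^{**} \times B^{**}$ and the cross block $B^{**} \times (B^{**})^c$ (where $B^{**}$ is the slightly enlarged ball appearing in $\CSJ(\phi)$), the standard algebraic identity
\begin{equation*}
(v(x)\varphi(x)-v(y)\varphi(y))^2 \le 2\varphi^2(x)(v(x)-v(y))^2 + 2 v^2(y)(\varphi(x)-\varphi(y))^2
\end{equation*}
together with the above pointwise inequality extracts, after symmetrization, a lower bound of the form $\tfrac{1}{2}\sE^{B^{**}}(v\varphi, v\varphi) - C \int v^2\, d\Gamma(\varphi,\varphi)$ for the diagonal contribution. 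The cross contribution is handled via $\J_{\phi,\le}$ and the crude estimate $u_+(y) \le v(x)+\theta+|u(y)|$, which produces a term $C\theta V(x_0,R)\T_\phi(u;x_0,R/2)$ after $\VD$, \eqref{pvd1} and \eqref{pvd} are used to translate between $\phi(y,\cdot)$ and $\phi(x_0,\cdot)$.

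Next I would invoke $\CSJ(\phi)$ itself to control $\int v^2 d\Gamma(\varphi,\varphi)$: the first term on its right-hand side is absorbed into $\sE^{B^{**}}(v\varphi, v\varphi)$ by repeating the symmetrization, while the second yields $C\phi(x_0,r_2)^{-1} I_0$. Using \eqref{pvd1} to write $\phi(x_0,r_2)^{-1} \le C(1+r_1/r_2)^{\beta_2}\phi(x_0,R)^{-1}$, everything combines to
\begin{equation*}
\sE(v\varphi, v\varphi) \le \frac{C I_0}{\phi(x_0,R)}\Bigl(1+\frac{r_1}{r_2}\Bigr)^{\beta_2} + C\theta V(x_0,R)\T_\phi(u;x_0,R/2).
\end{equation*}
The last step applies $\FK(\phi)$ on $D := \{v\varphi>0\} \subset B^*$, whose measure is estimated via Chebyshev by $\mu(D) \le I_0/\theta^2$, giving
\begin{equation*}
I_1 \le \|v\varphi\|_2^2 \le C\phi(x_0,R)\Bigl(\tfrac{\mu(D)}{V(x_0,R)}\Bigr)^\nu \sE(v\varphi,v\varphi) \le \frac{C I_0^\nu \phi(x_0,R)}{\theta^{2\nu}V(x_0,R)^\nu}\sE(v\varphi,v\varphi).
\end{equation*}
Substituting the energy bound produces the asserted inequality. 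The exponent $d_2+\beta_2-\beta_1$ on the tail factor arises from multiplying the volume ratio $V(x_0,R)/V(x_0,r_2) \le C(1+r_1/r_2)^{d_2}$ (from $\VD$) by $\phi$-ratios $(1+r_1/r_2)^{\beta_2-\beta_1}$ (from \eqref{pvd1}) that appear when converting the $\phi(x_0,R)$ prefactor on the tail term to $\phi(x_0,r_2)$.

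The main obstacle is the careful constant-bookkeeping throughout: the exponents $\beta_2$ and $d_2+\beta_2-\beta_1$ are exactly what combining the $\VD$, \eqref{pvd1}, and \eqref{pvd} comparisons produces at each step, so no lossy estimate is permitted. The state-dependence of $\phi$, absent from \cite{CKW1}, is controlled entirely by \eqref{pvd}: whenever $\phi(y,\rho)$ appears with $y$ ranging over $B^{**}$, it is replaced by $\phi(x_0,\rho)$ up to a harmless multiplicative constant.
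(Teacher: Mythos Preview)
Your approach is exactly the one the paper intends: the paper does not give a proof but refers to \cite[Lemma 4.8]{CKW1}, and your outline (take a $\CSJ(\phi)$ cutoff, test the subharmonicity of $u$ against $v\varphi^2$, split into diagonal and off-diagonal blocks, absorb the carr\'e-du-champ term via $\CSJ(\phi)$, then close with $\FK(\phi)$ and Chebyshev on $\mu(\{v\varphi>0\})\le I_0/\theta^2$) reproduces that argument, with the state-dependence of $\phi$ correctly handled by \eqref{pvd}.

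There is, however, a real bookkeeping slip in your treatment of the cross block. The displayed energy bound
\[
\sE(v\varphi,v\varphi)\le \frac{CI_0}{\phi(x_0,R)}\Big(1+\tfrac{r_1}{r_2}\Big)^{\beta_2}+C\,\theta\, V(x_0,R)\,\T_\phi(u;x_0,R/2)
\]
cannot be correct: multiplying it by the $\FK(\phi)$ factor $\phi(x_0,R)\big(I_0/(\theta^2 V(x_0,R))\big)^\nu$ turns the second term into $C\,I_0^{\nu}\theta^{1-2\nu}V(x_0,R)^{1-\nu}\phi(x_0,R)\T_\phi$, which does \emph{not} reduce to the claimed $I_0^{1+\nu}\theta^{-(2\nu+1)}V(x_0,R)^{-\nu}\phi(x_0,R)\T_\phi$. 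The correct estimate bounds the cross integral by $\big(\int_{B^*} v\varphi^2\,d\mu\big)$ times a pointwise tail, and then uses that on $\{v>0\}$ one has $u>\theta$ and hence $v\le u\le u^2/\theta$, giving $\int v\varphi^2\,d\mu\le I_0/\theta$. This is what produces the crucial $1/\theta$ attached to the tail term in the conclusion. The factor $(1+r_1/r_2)^{d_2+\beta_2-\beta_1}$ comes out of this same step, from comparing $V(x,d(x,y))\phi(x,d(x,y))$ with $V(x_0,d(x_0,y))\phi(x_0,d(x_0,y))$ via $\VD$, \eqref{pvd1} and \eqref{pvd} when passing from $J(x,y)$ to the tail centered at $x_0$; it is not a post-hoc ``conversion'' as your last paragraph suggests. (Your auxiliary inequality ``$u_+(y)\le v(x)+\theta+|u(y)|$'' is also garbled---what is actually needed is a bound on $(u(y)-u(x))_+$ when $v(x)>0$.) With this correction the rest of your outline goes through unchanged.
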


We also need the following elementary iteration lemma, see, e.g.,
\cite[Lemma 7.1]{Giu} or \cite[Lemma 4.9]{CKW1}.

\begin{lemma}\label{L:it} Let $\beta>0$ and let $\{A_j\}$ be a sequence of real positive numbers such that
$$A_{j+1}\le c_0b^jA_j^{1+\beta},\quad j\ge 1$$ with $c_0>0$ and $b>1$. If $$A_0\le c_0^{-1/\beta}b^{-1/\beta^2},$$ then we have $$A_j\le b^{-j/\beta}A_0,\quad j\ge0,$$ which in particular yields $\lim_{j\to\infty}{ A_j}=0.$ \end{lemma}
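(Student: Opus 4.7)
The plan is a straightforward induction on $j$, since the statement is a purely elementary recursion estimate with no analytic content.

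First I would set up the induction hypothesis $H(j)$: \ $A_j \le b^{-j/\beta} A_0$. The base case $H(0)$ is trivial, as $b^{0} = 1$. For the inductive step, assuming $H(j)$, I would plug into the given recursion to obtain
\begin{equation*}
A_{j+1} \;\le\; c_0 b^{j} A_j^{1+\beta} \;\le\; c_0 b^{j} \bigl(b^{-j/\beta} A_0\bigr)^{1+\beta}
 \;=\; c_0\, b^{j - j(1+\beta)/\beta}\, A_0^{1+\beta}
 \;=\; c_0\, b^{-j/\beta}\, A_0^{1+\beta}.
\end{equation*}

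The entire content of the lemma is then to check that the smallness hypothesis $A_0 \le c_0^{-1/\beta} b^{-1/\beta^2}$ is exactly what is needed to close the induction. Raising both sides of this hypothesis to the power $\beta$ gives $c_0 A_0^{\beta} \le b^{-1/\beta}$, so that
\begin{equation*}
c_0 \, b^{-j/\beta}\, A_0^{1+\beta} \;=\; \bigl(c_0 A_0^{\beta}\bigr)\, b^{-j/\beta}\, A_0
 \;\le\; b^{-1/\beta}\, b^{-j/\beta}\, A_0
 \;=\; b^{-(j+1)/\beta} A_0,
\end{equation*}
which establishes $H(j+1)$. The conclusion $A_j \to 0$ as $j \to \infty$ is then immediate from $b>1$ and $\beta>0$.

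There is essentially no obstacle here; the only subtlety is the bookkeeping of exponents (checking that $j - j(1+\beta)/\beta = -j/\beta$ and that the $A_0$-smallness threshold has the exact exponents $-1/\beta$ and $-1/\beta^2$ that make the induction propagate). If the recursion is to be read as holding for $j \ge 1$ only (as literally stated), the same induction starts at $j=1$ with $H(1)$ forced to hold by hypothesis, and the argument is otherwise unchanged.
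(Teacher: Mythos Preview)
Your induction is correct and is the standard proof; the paper itself does not prove this lemma but simply cites it from \cite{Giu} and \cite{CKW1}. One small quibble: your closing remark that ``$H(1)$ is forced to hold by hypothesis'' if the recursion is read literally for $j\ge 1$ is not right---nothing in the hypotheses relates $A_1$ to $A_0$ in that case---but the ``$j\ge 1$'' is almost certainly a typo for ``$j\ge 0$'' (as confirmed by the paper's own application of the lemma, which starts from $A_0$), and your main argument handles that case cleanly.
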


The following proposition gives us the infimum of the superharmonic function. This extends the analogous expansion of positivity in the local setting, which is a key step towards $\WEHI^+(\phi)$.

\begin{proposition}\label{P:infsuph} Let $B_r=B(x_0,r)$ for some $x_0\in M$ and any $r>0$.
Assume that $u$ is a bounded and $\sE$-superharmonic function on $B_R$ such that $u\ge 0$ on $B_R$. Assume that $\VD$, \eqref{pvd1}, \eqref{pvd}, $\J_{\phi,\le}$, $\FK(\phi)$, $\PI(\phi)$ and  $\CSJ(\phi)$
hold.  Suppose that there exist constants $\lambda>0$ and $\sigma\in(0,1]$ such that
\be\label{eq:WHI-GHH-as}
\mu(B_r\cap \{u\ge \lambda\})\ge \sigma\mu(B_r)
\ee
for some $r$ satisfying
 $ 0<r<R/(12\kappa )$, where $\kappa\ge 1$ is the constant in $\PI(\phi)$.  Then, there exists a constant $\delta\in(0,1/4)$ depending on $\sigma$ but independent of
   $\lambda$, $r, R$,   $x_0$ and $u$,  such that
\be\label{eq:WHI-GHH}
\essinf_{B_{4r}}u\ge \delta \lambda -\phi(x_0,r){\T_\phi}\, (u_-; x_0,R).\ee
\end{proposition}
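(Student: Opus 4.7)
The strategy is a De Giorgi iteration in the non-local setting, following \cite{CKP2} (originally going back to \cite{DT}), using the measure-decay estimate of Lemma \ref{L:EHIone} as the starting step and the energy estimate of Lemma \ref{oppo} as the iteration engine. Write $l := \phi(x_0,r)\T_\phi(u_-;x_0,R)$ for short. If $\delta\lambda\le l$ there is nothing to prove since $u\ge 0$ on $B_R\supset B_{4r}$, so assume $\delta\lambda>l$. Applying Lemma \ref{L:EHIone} with parameter $\delta_*/2$, for some $\delta_*\in(0,1/4)$ to be chosen in terms of $\sigma$, gives
$$ \frac{\mu(B_{6r}\cap\{u\le \delta_*\lambda-l/2\})}{\mu(B_{6r})}\le \frac{c_1}{\sigma\log(1/\delta_*)}=:\eta(\delta_*), $$
and $\eta(\delta_*)\to 0$ as $\delta_*\to 0$. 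The aim is to upgrade this measure-smallness to the pointwise bound $u\ge (\delta_*/2)\lambda-l/2$ on the slightly smaller ball $B_{4r}$.

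\textbf{Iteration setup.} Define decreasing levels and shrinking radii
$$ k_j:=\tfrac12\delta_*\lambda(1+2^{-j})-\tfrac{l}{2},\qquad r_j:=4r+2r\cdot 2^{-j},\qquad j\ge 0, $$
so $k_0=\delta_*\lambda-l/2>0$, $k_j\downarrow \delta_*\lambda/2-l/2$, $r_0=6r$, $r_j\downarrow 4r$. Set $w_j:=(k_j-u)_+$ and
$$ A_j:=\frac{1}{V(x_0,r_j)}\int_{B(x_0,r_j)}w_j^2\,d\mu. $$
Since $u$ is $\sE$-superharmonic on $B_R$ and $u\ge 0$ there, the truncation $w_j$ is $\sE$-subharmonic on $B_R$ in the sense of \cite[Theorem 2.9]{ChK}, with a nonlocal correction coming from jumps exiting $B_R$; this correction is controlled by $l$ (by $\J_{\phi,\le}$, $\VD$ and \eqref{pvd1}). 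Apply Lemma \ref{oppo} to $w_j$ between the radii $r_{j+1}$ and $r_j-r_{j+1}=r\cdot 2^{-j-1}$, with cutoff level $\theta:=k_j-k_{j+1}=\delta_*\lambda\cdot 2^{-j-2}$. The inclusion $\{w_{j+1}>0\}=\{u<k_{j+1}\}\subset\{w_j\ge\theta\}$ combined with Chebyshev and the bound $w_j\le k_j+u_-\le \delta_*\lambda+u_-$ on $B_R^c$, yields
$$ \phi(x_0,r)\T_\phi(w_j;x_0,r_j/2)\le C(\delta_*\lambda+l), $$
so Lemma \ref{oppo} produces, after standard rearrangement, the De Giorgi recursion
$$ \tilde A_{j+1}\le C\, b^{\,j}\,\tilde A_j^{1+\nu},\qquad \tilde A_j:=A_j/(\delta_*\lambda)^2, $$
for some $C>0$ and $b>1$ independent of $j$, $\lambda$, $l$ and $\delta_*$.

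\textbf{Closing the iteration and main obstacle.} Since $w_0\le k_0\le \delta_*\lambda$ and vanishes on $\{u>k_0\}$, the measure-decay gives $\tilde A_0\le \eta(\delta_*)$. Choose $\delta_*=\delta_*(\sigma)$ so small that $\eta(\delta_*)\le C^{-1/\nu}b^{-1/\nu^2}$; Lemma \ref{L:it} then forces $\tilde A_j\to 0$, hence $(\delta_*\lambda/2-l/2-u)_+=0$ a.e.\ on $B_{4r}$, giving $u\ge \delta\lambda-l$ there with $\delta:=\delta_*/2$. The main obstacle is the precise control of the nonlocal tail $\T_\phi(w_j;x_0,\cdot)$ uniformly in $j$: unlike in the diffusion setting, $w_j=(k_j-u)_+$ is not literally $\sE$-subharmonic, and the application of Lemma \ref{oppo} picks up a polynomially growing factor $2^{j(d_2+\beta_2-\beta_1)}$ that must be absorbed into $b$ without letting the constants depend on $\delta_*$, so that the threshold in Lemma \ref{L:it} is a fixed number which can actually be attained by choosing $\delta_*$ small. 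Keeping all the tail contributions (from the subharmonicity correction, from Lemma \ref{oppo}, and from $\T_\phi(w_j;\cdot)$) compatible with the final conclusion---whose tail term is $\phi(x_0,r)\T_\phi(u_-;x_0,R)$ with coefficient exactly $1$---is the delicate technical point, and is where $\J_{\phi,\le}$, $\FK(\phi)$, $\PI(\phi)$ and $\CSJ(\phi)$ are all used simultaneously.
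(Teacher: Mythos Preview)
Your proposal is correct and follows essentially the same route as the paper's own proof: initialize with Lemma \ref{L:EHIone}, run a De~Giorgi iteration by applying Lemma \ref{oppo} to $w_j=(k_j-u)_+$ on shrinking balls with decreasing levels, and close with Lemma \ref{L:it} after choosing $\delta_*=\delta_*(\sigma)$ so small that the initial quantity falls below the fixed threshold $C^{-1/\nu}b^{-1/\nu^2}$; the only cosmetic difference is that the paper tracks the measure ratios $\mu(B_{r_j}\cap\{u\le l_j\})/\mu(B_{r_j})$ instead of normalized $L^2$-averages, and uses levels $l_j=\delta\lambda(1+2^{-j-1})$ decreasing to $\delta\lambda$. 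One clarification: your caution that $w_j$ is ``not literally $\sE$-subharmonic'' is unnecessary---since $-u$ (hence $k_j-u$) is $\sE$-subharmonic on $B_R$ and $x\mapsto x_+$ is convex increasing, $(k_j-u)_+$ is genuinely subharmonic in the probabilistic sense and therefore $\sE$-subharmonic by Theorem \ref{equ-har}, so Lemma \ref{oppo} applies to $w_j$ directly with no correction term; the paper simply records ``$-u$ is an $\sE$-subharmonic function on $B_R$'' and proceeds, with the contribution of $u_-$ outside $B_R$ entering only through the tail term already built into Lemma \ref{oppo}.
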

\begin{proof} Without loss of generality, we may and do assume that
\begin{equation}\label{e:ehise-01} \phi(x_0,r){\T_\phi}\,(u_-; x_0,R)\le \delta \lambda;\end{equation} otherwise the conclusion is trivial due to the fact that $u\ge0$ on $B_R$.

For any $j\ge0$, define
$$l_j=\delta \lambda+2^{-j-1}\delta \lambda,\quad r_j=4r+2^{1-j}r.$$ Then, by \eqref{e:ehise-01} we see that
$$l_0= \frac{3}{2} \delta \lambda\le 2\delta \lambda- \frac{1}{2}{\phi(x_0,r)}{\T_\phi}\,(u_-; x_0,R).$$ Lemma \ref{L:EHIone} implies that
\begin{equation}\label{e:ehise-02}\frac{\mu(B_{6r}\cap \{u\le l_0\})}{\mu(B_{6r})}\le\frac{c_1}{\sigma \log \frac{1}{2\delta} }.\end{equation}
In the following, let us denote by $$B_j=B_{r_j},\quad
w_j=(l_j-u)_+,\quad  A_j=\frac{\mu(B_j\cap \{u\le l_j\})}{\mu(B_j)}.
$$ Note that, $-u$ is an $\sE$-subharmonic function on $B_R$. Then, we have by Lemma \ref{oppo} that
\begin{align*} &A_{j+2}(l_{j+1}-l_{j+2})^2\\
&=\frac{1}{\mu(B_{j+2})}\int_{B_{j+2}\cap \{u\le l_{j+2}\}} (l_{j+1}-l_{j+2})^2\,d\mu\\
&\le \frac{1}{\mu(B_{j+2})}\int_{B_{j+2}} w_{j+1}^2\,d\mu\\
&\le \frac{c_2}{(l_j-l_{j+1})^{2\nu}}\left( \frac{1}{\mu(B_{j+1})}\int_{B_{j+1}} w_{j}^2\,d\mu\right)^{1+\nu}\left(\frac{r_{j+2}}{r_{j+1}-r_{j+2}}\right)^{\beta_2}\\
&\quad \times \left[1+\frac{1}{l_j-l_{j+1}}\left(\frac{r_{j+2}}{r_{j+1}-r_{j+2}}\right)^{d_2+
\beta_2-\beta_1}\phi(x_0,r_{j+1}) {\T_\phi}\,(w_j; x_0, r_{j+1})\right]\\
&\le \frac{c_3}{[(2^{-j}-2^{-j-1})\delta \lambda]^{2\nu}}\left[(\delta \lambda)^2 A_j\right]^{1+\nu}\left(\frac{1}{2^{-j}-2^{-j-1}}\right)^{\beta_2}\\
&\quad \times \left[1+\frac{1}{(2^{-j}-2^{-j-1})\delta
\lambda}\left(\frac{1}{2^{-j}-2^{-j-1}}\right)^{d_2+
\beta_2-\beta_1} \phi(x_0, r_{j+1}){\T_\phi}\,(w_j; x_0, r_{j+1})\right]\\
&\le c_{4} (\delta \lambda)^2 A_j^{1+\nu}
2^{(1+2\nu+d_2+2\beta_2-\beta_1)j}\left(1+\frac{ 1}{\delta
\lambda}\phi(x_0,r_{j+1}){\T_\phi}\,(w_j; x_0, r_{j+1})\right),
\end{align*} where $\nu$ is the constant in $\FK(\phi)$, and in the third inequality we have used the facts that $w_j\le l_j\le 3\delta \lambda/2$ and
$$\int_{B_{j+1}}w_j^2\,d\mu\le c'(\delta \lambda)^2 \mu(B_{j+1}\cap \{w_j\ge0\})\le c''(\delta \lambda )^2 \mu(B_j\cap\{u \le l_j\}).$$

Note that \begin{align*}&\phi(x_0,r_{j+1}){\T_\phi}\, (w_j; x_0, r_{j+1})\\
&=\phi(x_0,r_{j+1})\int_{B_{j+1}^c}\frac{|w_j|(z)}{V(x_0,d(x_0,z))\phi(x_0,d(x_0,z))}\,\mu(dz)\\
&\le \phi(x_0,r_{j+1}) \int_{B_{j+1}^c}\frac{l_j+u_-(z)}{V(x_0,d(x_0,z))\phi(x_0,d(x_0,z))}\,\mu(dz)\\
&=\phi(x_0,r_{j+1})\bigg[\int_{B_R\setminus B_{j+1}} \frac{l_j}{V(x_0,d(x_0,z))\phi(x_0,d(x_0,z))}\,\mu(dz)\\
&\qquad\qquad\quad +\int_{B_R^c} \frac{l_j+u_-(z)}{V(x_0,d(x_0,z))\phi(x_0,d(x_0,z))}\,\mu(dz)\bigg]\\
&\le c_5\bigg(l_j+ \frac{\phi(x_0,r_{j+1})}{\phi(x_0,R)}l_j+ {\phi(x_0,r)}{\T_\phi}\,(u_-;x_0,R)\bigg)\\
&\le c_6 \delta \lambda,
\end{align*} where in the second equality we have used the fact that $u\ge0 $ on $B_R$, in the second inequality we used  Remark \ref{intelem}, and the last inequality follows from \eqref{e:ehise-01}.

According to all the estimates above, we see that there is a constant $c_7>0$ such that for all $j\ge0$,
$$A_{j+2}\le c_7 A_j^{1+\nu} 2^{(3+2\nu+d_2+2\beta_2-\beta_1)j}.$$
Let $c^*=c_{7}^{-1/\nu}2^{-(3+2\nu+d_2+2\beta_2-\beta_1)/\nu^2}$ and choose the constant $\delta\in\big(0,1/4\big)$ such that
$$\frac{c_1}{\sigma \log \frac{1}{2\delta} }\le c^*,$$ then, by \eqref{e:ehise-02},  $$A_0\le c^*= c_{7}^{-1/\nu}2^{-(3+2\nu+d_2+2\beta_2-\beta_1)/\nu^2}.$$ According to Lemma \ref{L:it}, we can deduce that $\lim_{i\to\infty}A_i=0$. Therefore, $u\ge \delta \lambda$ on $B_{4r}$, from which the desired assertion follows easily. \qed \end{proof}

\begin{remark}\rm
Proposition \ref{P:infsuph} contains \cite[Lemma 4.5]{GHH} as a special case. Among other things,
 in \cite[Lemma 4.5]{GHH},
it is assumed that $\phi (x, r)=r^\alpha$ for $\J_{\phi}$. Inequality \eqref{eq:WHI-GHH} under condition
 \eqref{eq:WHI-GHH-as}
is called a weak Harnack inequality in \cite{GHH}.
\end{remark}

Below is
a Krylov-Safonov covering lemma on metric measure spaces, whose
proof is essentially taken from \cite[Lemma 7.2]{KS}.
Note that the difference of the following definition of $[E]_\eta$
from that in
\cite[Lemma 7.2]{KS} is that here we
impose the restriction of $0<\rho <r$
and change the constant $3$ to
$5$. For the sake of completeness, we present the proof here.

\begin{lemma}\label{L:KS} Suppose that $\VD$ holds. Let $B_r(x_0)=B(x_0,r)$ for some $x_0\in M$ and any $r>0$. Let $E\subset B_r(x_0)$ be a measurable set. For any $\eta\in (0,1)$, define
$$[E]_\eta=\bigcup_{0<\rho<r}\bigg\{B_{5\rho}(x)\cap B_r(x_0): x\in B_r(x_0)\textrm{ and }\frac{\mu(E\cap B_{5\rho}(x))}{\mu(B_{\rho}(x))}>\eta\bigg\}.$$ Then, either $$[E]_\eta=B_r(x_0)$$ or $$\mu([E]_\eta)\ge \frac{1}{\eta}\mu(E).$$ \end{lemma}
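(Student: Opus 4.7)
The plan is to argue by contrapositive: assume $[E]_\eta \neq B_r(x_0)$ and derive $\mu([E]_\eta) \geq \mu(E)/\eta$, using a Vitali-type covering in the Krylov--Safonov tradition. The key idea is to select, for each Lebesgue point $y$ of $E$, a critical radius $\rho^*(y)$ at which the defining density condition just barely transitions from holding to failing, so that one can exploit both regimes: the density condition at slightly smaller scales places a neighborhood of $y$ inside $[E]_\eta$, while its failure at slightly larger scales bounds the mass of $E$ near $y$.

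First I would fix $z_0 \in B_r(x_0) \setminus [E]_\eta$. If $(x,\rho) \in B_r(x_0) \times (0,r)$ satisfies the density condition $\mu(E \cap B_{5\rho}(x))/\mu(B_\rho(x)) > \eta$, then $z_0 \notin B_{5\rho}(x)$, hence $\rho < d(x,z_0)/5 < 2r/5$. For almost every $y \in E$ (Lebesgue points, which exist under $\VD$), the density condition holds for all sufficiently small $\rho$, so
\[
\rho^*(y) := \sup\Big\{\rho \in (0,r) : \mu(E \cap B_{5\rho}(y))/\mu(B_\rho(y)) > \eta\Big\} \in (0,\, 2r/5].
\]
For each such $y$, pick $\tilde\rho_y \in (\rho^*(y)/2, \rho^*(y))$ at which the density condition holds, and $\bar\rho_y \in (\rho^*(y),\, \min\{5\tilde\rho_y, r\})$ at which it fails; the interval is nonempty since $5\tilde\rho_y > 5\rho^*(y)/2 > \rho^*(y)$.

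Next, apply the standard $5r$-covering lemma to $\{B_{\tilde\rho_y}(y)\}$ (whose radii are bounded by $2r/5$) to extract a countable disjoint subfamily $\{B_{\tilde\rho_i}(y_i)\}$ with $E \subset \bigcup_i B_{5\tilde\rho_i}(y_i)$ modulo a null set. Since $\tilde\rho_i < \bar\rho_i$, we have $B_{5\tilde\rho_i}(y_i) \subset B_{5\bar\rho_i}(y_i)$, and the density failure at $\bar\rho_i$ yields
\[
\mu(E) \leq \sum_i \mu(E \cap B_{5\tilde\rho_i}(y_i)) \leq \sum_i \mu(E \cap B_{5\bar\rho_i}(y_i)) \leq \eta \sum_i \mu(B_{\bar\rho_i}(y_i)).
\]
On the other hand, $\bar\rho_i < 5\tilde\rho_i$ implies $B_{\bar\rho_i}(y_i) \subset B_{5\tilde\rho_i}(y_i)$, and the density condition at the smaller scale $\tilde\rho_i$ gives $B_{5\tilde\rho_i}(y_i) \cap B_r(x_0) \subset [E]_\eta$. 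Hence each $B_{\bar\rho_i}(y_i) \cap B_r(x_0) \subset [E]_\eta$. Combining the disjointness of $\{B_{\tilde\rho_i}(y_i)\}$ with $\VD$ (to compare $\mu(B_{\bar\rho_i}(y_i))$ to $\mu(B_{\tilde\rho_i}(y_i))$, which are comparable since $\bar\rho_i < 5\tilde\rho_i$) should yield $\sum_i \mu(B_{\bar\rho_i}(y_i)) \leq \mu([E]_\eta)$, whence $\mu([E]_\eta) \geq \mu(E)/\eta$.

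The most delicate step is this last one: the balls $\{B_{\bar\rho_i}(y_i)\}$ are in general neither disjoint nor contained in $B_r(x_0)$, so expressing their total mass cleanly in terms of $\mu([E]_\eta)$ requires careful bookkeeping. The matching of the factor $5$ in the definition of $[E]_\eta$ with the factor $5$ in Vitali's covering lemma is exactly what allows the disjointness of $\{B_{\tilde\rho_i}\}$ to transfer (via $\VD$) into a packing bound inside $[E]_\eta$, paralleling the dyadic-cube argument in the original Krylov--Safonov setting \cite{KS}. The boundary contribution from balls protruding beyond $B_r(x_0)$ is controlled by the bound $\tilde\rho_i \leq 2r/5$ together with the doubling constant applied to the ball $B_r(x_0)$ itself.
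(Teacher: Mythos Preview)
Your argument has a genuine gap at the final step, and the difficulty you flag there is not merely bookkeeping --- it actually destroys the sharp constant $1/\eta$ that the lemma asserts. You arrive at
\[
\mu(E)\le \eta\sum_i \mu\big(B_{\bar\rho_i}(y_i)\big),
\]
and then propose to bound the right-hand side by $\mu([E]_\eta)$ using the disjointness of the smaller balls $B_{\tilde\rho_i}(y_i)$ together with $\VD$. But $\VD$ only gives $\mu(B_{\bar\rho_i}(y_i))\le C_\mu\,\mu(B_{\tilde\rho_i}(y_i))$ for some $C_\mu>1$, and even then the disjoint balls $B_{\tilde\rho_i}(y_i)$ need not lie inside $B_r(x_0)$ (hence not inside $[E]_\eta$). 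At best you obtain $\mu(E)\le C\,\eta\,\mu([E]_\eta)$ for some doubling-dependent $C>1$, which is strictly weaker than the claim $\mu([E]_\eta)\ge \mu(E)/\eta$. The sharp factor matters: in the application (the iteration $\mu(A_t^i)\ge \eta^{-1}\mu(A_t^{i-1})$ in the proof of Theorem~\ref{P:essuper}) one exponentiates this ratio, and the lemma as stated requires the clean $\eta^{-1}$.

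The paper avoids this loss by a different choice of covering: instead of covering $E$ by balls centred at Lebesgue points of $E$, one covers $[E]_\eta$ itself by balls $B_{r_x}(x)$ with $x\in[E]_\eta$ and $r_x=\tfrac12\,d\big(x,\,B_r(x_0)\setminus[E]_\eta\big)$. This radius has two built-in virtues. First, $B_{r_x}(x)\subset[E]_\eta$ by construction, so after Vitali the disjoint subfamily $\{B_{r_i}(x_i)\}$ satisfies $\sum_i\mu(B_{r_i}(x_i))\le\mu([E]_\eta)$ with no doubling constant. Second, each $5$-dilate $B_{5r_i}(x_i)$ meets $B_r(x_0)\setminus[E]_\eta$ at some point $y_i$; since the pair $(x_i,r_i)$ is admissible in the definition of $[E]_\eta$ and $y_i\notin[E]_\eta$, the density condition must fail, giving $\mu(E\cap B_{5r_i}(x_i))\le\eta\,\mu(B_{r_i}(x_i))$ directly. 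Combining these with $\mu(E)=\mu(E\cap[E]_\eta)\le\sum_i\mu(E\cap B_{5r_i}(x_i))$ yields the sharp inequality. Your ``critical radius'' idea is natural, but centring the covering on $[E]_\eta$ rather than on $E$, with the distance-to-complement radius, is exactly what makes the constants collapse.
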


\begin{proof}
Define a
maximal operator $A:B_r(x_0)\to [0,\infty)$ as follows
$$A(x)=
\sup_{y\in B_r(x_0), x\in B_{5\rho}(y), 0<\rho<r}
\frac{\mu(E\cap B_{5\rho}(y))}{\mu(B_{\rho}(y))} .$$ We claim that
$$[E]_\eta=\{x\in B_r(x_0): A(x)>\eta\}$$ for any $\eta\in(0,1)$. Indeed, let $x\in B_r(x_0)$ with $A(x)>\eta$. Then, there is a ball $B_\rho(y)$ with $0<\rho<r$, $y\in B_r(x_0)$ and $x\in B_{5\rho}(y)$ such that $\frac{\mu(E\cap B_{5\rho}(y))}{\mu(B_{\rho}(y))}>\eta$. This means that $$x\in\bigg\{B_{5\rho}(y)\cap B_r(x_0): y\in B_r(x_0)\textrm{ and }\frac{\mu(E\cap B_{5\rho}(y))}{\mu(B_{\rho}(y))}>\eta\bigg\}\subset [E]_\eta.$$ On the other hand, if $x\in [E]_\eta$, then there is a ball $B_\rho(y)$ with  $0<\rho<r$, $y\in B_r(x_0)$ and $x\in B_{5\rho}(y)$ such that $\frac{\mu(E\cap B_{5\rho}(y))}{\mu(B_{\rho}(y))}>\eta$. This implies that $A(x)>\eta.$

Suppose that $B_r(x_0)\setminus [E]_\eta\ne \emptyset$.
The set $[E]_\eta$ is open by definition. We cover $[E]_\eta$ by ball
$B_{r_x}(x)$, where $x\in [E]_\eta$ and $r_x=d(x, B_r(x_0)\setminus
[E]_\eta)/2\in(0,r)$.
By the Vitali covering lemma,{\footnote{The $\VD$ condition implies a covering theorem, here referred
to as the Vitali covering theorem. Indeed, given any collection of balls
with uniformly bounded radius, there exists a pairwise disjoint, countable
subcollection of balls, whose $5$-dilates cover the union of the original
collection. See \cite[Chapter 1]{H} for more details.}} there are
countably many pairwise disjoint balls $B_{r_i}(x_i)$, where
$r_i=r_{x_i}$ for all $i\ge1$, such that $[E]_\eta\subset
\bigcup_{i=1}^\infty B_{5r_i}(x_i).$ Note that, $B_{5r_i}(x_i)\cap
(B_r(x_0)\setminus [E]_\eta)\neq \emptyset$
for all $i\ge1$, and so
there is a point $y_i\in B_{5r_i}(x_i)\cap  (B_r(x_0)\setminus
[E]_\eta)$. In particular, $A(y_i)\le \eta$ for all $i\ge1$. Since $y_i\in B_r(x_0)$,
$x_i\in B_{5r_i}(y_i)$ and $0<r_i<r$, we conclude that
$$\mu (E\cap B_{5r_i}(x_i))\le\eta\mu(B_{r_i}(x_i)).$$ If $y$ is the density point $E$ (i.e. $y\in E$ such that $\lim_{\rho\to0} \frac{\mu(E\cap B_{\rho}(y))}{\mu(B_{\rho}(y))}=1)$, then
$$
\liminf_{\rho\to0} \frac{\mu(E\cap B_{5\rho}(y))}{\mu(B_{\rho}(y))}\ge \lim_{\rho\to0} \frac{\mu(E\cap B_{\rho}(y))}{\mu(B_{\rho}(y))}=1>\eta.
$$

Since $\mu$-almost every point $E$ is a density point; that is, for almost all $x\in E$, it holds that $\lim_{\rho\to0} \frac{\mu(E\cap B_\rho(x))}{\mu(B_\rho(x))}=1$, which follows from $\VD$ and the Lebesgue differentiation theorem (see \cite[Theorem 1.8]{H}), we observe that $\mu$-almost every point of $E$ belongs to $[E]_\eta$ for every $\eta\in (0,1)$. From this it follows that
$$\mu(E)= \mu (E\cap [E]_\eta)\le \sum_{i=1}^\infty \mu (E \cap B_{5r_i}(x_i))\le \eta \sum_{i=1}^\infty \mu(B_{r_i}(x_i)) \le \eta \mu([E]_\eta).$$ The above inequality yields the desired result.  \qed \end{proof}

We now are in a position to present the

\medskip

\noindent{\bf Proof of Theorem \ref{P:essuper}.}\quad Fix $\eta\in
(0,1)$. Let us define for any $t>0$ and $i\ge 0$
$$A_t^i=\Big\{ x\in B_r(x_0): u(x)>t\delta^i-\frac{T}{1-\delta}\Big\},$$ where $\delta\in(0,1)$ is
determined later and $$T={\phi(x_0,5r)}{\T_\phi}\, (u_-; x_0, R).$$ Obviously we have $A_t^{i-1}\subset A_t^i$ for all $i\ge 1$.
If there are a point $x\in B_r(x_0)$ and $0<\rho<r$ such that $B_{5\rho}(x)\cap B_r(x_0) \subset [A_t^{i-1}]_\eta$,
then, by the definition of $[A_t^{i-1}]_\eta$ and $\VD$,
$$\mu(A_t^{i-1}\cap B_{5\rho}(x))\ge \eta \mu(B_\rho(x))\ge c'\eta \mu(B_{5\rho}(x)),$$ where $c'$ is
a positive constant independent of $\eta$, $x_0$ and $\rho$. Below,
let $\delta$ be the constant given in Proposition \ref{P:infsuph}
corresponding to the factor $c'\eta$. Applying Proposition
\ref{P:infsuph}  with $\lambda= t\delta^{i-1}-\frac{T}{1-\delta}$
and $\sigma =c'\eta$, we get that
\begin{align*}\essinf_{B_{20\rho}(x)}u> &\delta
\left(t\delta^{i-1}-\frac{T}{1-\delta}\right)-
{\phi(x_0,5\rho)}{\T_\phi}\, (u_-; x_0, R)\\
 \ge &\delta
\left(t\delta^{i-1}-\frac{T}{1-\delta}\right)-T\\
=& t\delta^{i}-\frac{T}{1-\delta}.\end{align*} Hence, if
$B_{5\rho}(x)$ is one of the balls to make up to the set
$[A_t^{i-1}]_\eta$ in Lemma \ref{L:KS}, then $B_{5\rho}(x)\cap
B_r(x_0)\subset A_t^i$, which implies that $[A_t^{i-1}]_\eta\subset
A_t^i$. By Lemma \ref{L:KS}, we must have either $A_t^i=B_r(x_0)$ (since $A_t^i\subset B_r(x_0)$ for all $i\ge0$) or
\begin{equation}\label{e:econ01}\mu(A_t^i)\ge \frac{1}{\eta}\mu(A_t^{i-1}).\end{equation}

We choose an integer $j\ge 1$ so that
$$\eta^j< \mu(A_t^0)/\mu(B_r(x_0))\le \eta^{j-1}.$$ Suppose first that $A_t^{j-1}  \neq B_r(x_0)$. Then, by the fact that $A_t^{i-1}\subset A_t^i$ for all $i\ge 1$, we have
$A_t^{k}\neq B_r(x_0)$ for all $0\le k\le j-1$. Hence, according to
\eqref{e:econ01}, we obtain that
$$\mu(A_t^{j-1})\ge \frac{1}{\eta}\mu(A_t^{j-2})\ge \cdots\ge \frac{1}{\eta^{j-1}} \mu(A_t^0)\ge \eta \mu(B_r(x_0)).$$ Note that, the inequality holds trivially for the case that
$A_t^{j-1}=B_r(x_0)$, thanks to the fact that $\eta\in(0,1)$.
Therefore, according to Proposition \ref{P:infsuph} again, we have
\begin{align*}\essinf_{B_{4r}(x_0)}u\ge &c_1\left(t\delta^{j-1}-\frac{T}{1-\delta}\right)-T\\
\ge& c_1t\delta^{j-1}-\frac{c_2T}{1-\delta}\\
\ge & c_1t\left(\frac{ \mu(A_t^0)}{\mu(B_r(x_0))
}\right)^{1/\gamma}- \frac{c_2T}{1-\delta},\end{align*} where $c_1$
is the constant given in Proposition \ref{P:infsuph} corresponding
to the factor $\delta$, and $\gamma=\log_\delta \eta.$  This is,
$$ \frac{ \mu(A_t^0)}{\mu(B_r(x_0)) } \le \frac{c_3}{t^\gamma}\left(\essinf _{B_{4r}(x_0)} u+ \frac{T}{1-\delta}\right)^\gamma.$$

By Cavalieri's principle, we have for any $0<\eps<\gamma$ and $a>0$,
\begin{align*}\frac{1}{\mu(B_r(x_0))}\int_{B_r(x_0)} u^\epsilon\,d\mu&=\eps \int_0^\infty t^{\eps-1} \frac{\mu(B_r(x_0)\cap \{u>t\})}{\mu(B_r(x_0))}\,dt\\
&\le \eps \int_0^\infty t^{\eps-1} \frac{\mu(A_t^0)}{\mu(B_r(x_0))}\,dt\\
&\le \eps\left[\int_0^a t^{\eps-1}\,dt+ c_3\left(\essinf _{B_{4r}(x_0)}
u+ \frac{T}{1-\delta}\right)^\gamma\int_a^\infty
t^{\eps-1-\gamma}\,dt\right]\\
&\le c_4\bigg[ a^{\eps}+ \left(\essinf _{B_{4r}(x_0)}
u+ \frac{T}{1-\delta}\right)^\gamma a^{\eps-\gamma}\bigg].
\end{align*} In particular, taking $$a=
\essinf _{B_{4r}(x_0)} u+ \frac{T}{1-\delta},$$ we finally get
that
$$\frac{1}{\mu(B_r(x_0))}\int_{B_r(x_0)} u^\epsilon\,d\mu \le c_4 \left(\essinf _{B_{4r}(x_0)} u+ \frac{T}{1-\delta}\right)^\eps.$$ This along with \eqref{pvd1} concludes the proof. \qed

\begin{remark}\rm $\WEHI^+(\phi)$ is equivalent to
the inequality \eqref{eq:WHI-GHH} under $\VD$ and condition \eqref{eq:WHI-GHH-as}.
Indeed, the proof above shows that \eqref{eq:WHI-GHH} under $\VD$ and condition \eqref{eq:WHI-GHH-as} implies $\WEHI^+(\phi)$.
Conversely, assume $\WEHI^+ (\phi)$ holds. Under
condition \eqref{eq:WHI-GHH-as} we have

\[
\left(\frac{1}{\mu(B_r)}\int_{B_r} u^\varepsilon\,d\mu\right)^{1/\varepsilon}
\ge \left(\frac{1}{\mu(B_r)}\int_{B_r\cap \{u\ge \lambda\}} u^\varepsilon\,d\mu\right)^{1/\varepsilon}
\ge \lambda \sigma^{1/\eps}.
\]
Plugging this into $\WEHI^+(\phi)$ yields
\eqref{eq:WHI-GHH}.
\end{remark}

\section{Implications of $\EHI$}
\label{S:4}

In this section, we first study the relation between
$\EHI$ and  $\EHI (\phi)$, and then show that under some conditions
$\EHI$ implies $\PI (\phi)$.

\subsection{$\EHI+\E_{\phi,\le}+\J_{\phi,\le} \Longrightarrow\EHI(\phi)+\FK(\phi)$}\label{S:4.1}

We first recall the following L\'{e}vy system formula. See, for example \cite[Appendix A]{CK2} for a proof.

\begin{lemma}\label{Levy-sys}
Let $f$ be a non-negative measurable function on ${\mathbb R}_+
\times M \times M$ that vanishes along the diagonal. Then for every
$t\geq 0 $, $x\in M_0$ and  stopping time $T$ $($with
respect to the filtration of $\{X_t\}$$)$,
$$
{\mathbb E}^x \left[\sum_{s\le T} f(s,X_{s-}, X_s) \right]={\mathbb
E}^x \left[ \int_0^T  \int_M f(s,X_s, y)\, J(X_s,dy)
\,ds \right].
 $$
\end{lemma}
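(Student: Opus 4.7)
The plan is to derive the formula from general L\'evy system theory for Hunt processes, combined with the Beurling--Deny--Fukushima representation of the regular Dirichlet form $(\sE,\sF)$.

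First, since $X$ is a Hunt process on $M_0$, general theory guarantees the existence of a L\'evy system $(N,H)$ for $X$: a kernel $N(x,dy)$ on $M_0$ with $N(x,\{x\})=0$ together with a positive continuous additive functional (PCAF) $H$ of $X$, such that for every non-negative Borel $f$ on $\bR_+\times M\times M$ vanishing on the diagonal and every $\{\sF_t\}$-stopping time $T$,
$$
\bE^x\Big[\sum_{s\le T} f(s,X_{s-},X_s)\Big]
=\bE^x\Big[\int_0^T\!\!\int_M f(s,X_s,y)\,N(X_s,dy)\,dH_s\Big].
$$
Thus the proof reduces to identifying $N(X_s,dy)\,dH_s$ with $J(X_s,dy)\,ds$.

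Next I would match the two sides by computing the energy $\sE(f,f)$ for $f\in\sF_b$ in two ways. The paper's definition gives $\sE(f,f)=\int_{M\times M\setminus\mathrm{diag}}(f(x)-f(y))^2\,J(dx,dy)$, while Fukushima's decomposition combined with the Revuz correspondence expresses the same quantity as $\tfrac12\int (f(x)-f(y))^2\,N(x,dy)\,\mu_H(dx)$, where $\mu_H$ is the Revuz measure of $H$. Since $(\sE,\sF)$ is a symmetric pure-jump regular Dirichlet form with no killing part, comparing these two expressions on an $\sE_1$-dense subset of $\sF_b$ and polarizing forces $\mu_H=\mu$, so that $H_t=t$ $\bP^x$-a.s.\ for $\sE$-q.e.\ $x$, and $N(x,dy)\,\mu(dx)=2\,J(dx,dy)$ on $M\times M\setminus\mathrm{diag}$. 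Plugging these identifications into the displayed identity yields the desired formula first for simple integrands of the form $f(s,x,y)=g(s)\mathbf{1}_A(x)\mathbf{1}_B(y)$ with $\overline A\cap\overline B=\emptyset$, and then for all non-negative measurable $f$ vanishing on the diagonal by a standard monotone class argument, approximating the diagonal from outside by open neighborhoods.

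The main obstacle is the identification $\mu_H=\mu$ (equivalently, $H_t=t$): this relies on the uniqueness part of the Revuz correspondence together with the fact that the $L^2$-generator of the associated semigroup $\{P_t\}$ is $\sL$ with $-\langle \sL f,g\rangle=\sE(f,g)$, so that the PCAF whose Revuz measure is $\mu$ must coincide (up to $\sE$-equivalence) with the identity functional $t\mapsto t$. Once this is settled, the symmetrization factor of $2$ relating the symmetric measure $J(dx,dy)$ to the (a priori non-symmetric) kernel $N(x,dy)\,\mu(dx)$ is a matter of bookkeeping, and the passage from bounded stopping times and bounded $f$ to the general statement follows by optional stopping together with monotone convergence.
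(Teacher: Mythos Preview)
The paper does not prove this lemma but simply refers to \cite[Appendix A]{CK2}; your sketch follows precisely the argument given there (existence of a L\'evy system $(N,H)$ for the Hunt process $X$, identification of $(N,H)$ with the jumping kernel and $dt$ via the Beurling--Deny representation and the Revuz correspondence, then a monotone-class extension to general $f$).

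One point worth tightening: the L\'evy system $(N,H)$ is not unique, so the identification $\mu_H=\mu$ (equivalently $H_t=t$) is a normalization one \emph{imposes}---legitimate here because $X$ is $\mu$-symmetric---rather than a conclusion ``forced'' by comparing the two energy expressions. Once $H_t=t$ is fixed, the energy comparison does determine $N$. Also, with the paper's Beurling--Deny normalization $\sE(f,f)=\int\!\!\int(f(x)-f(y))^2\,J(dx,dy)$ one indeed obtains $N(x,dy)\,\mu(dx)=2J(dx,dy)$, so you should verify that the kernel written as $J(X_s,dy)$ in the lemma statement is the full jump intensity $N(X_s,dy)$ under the convention in use, rather than leaving the factor of $2$ as unspecified ``bookkeeping.''
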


For any open subset $D\subset M$, denote the transition semigroup
of the part process $X^D=\{X^D_t\}$ of $X$ killed upon leaving $D$ by $\{P^D_t\}$; that is, for any $f\in {\cal B}_+ (D)$,
$$
P^D_t f (x) =\bE^x \left[ f(X^D_t) \right]=\bE^x [ f(X_t); t< \tau_D], \quad x\in D\cap M_0.
$$
The Green operator $G_D$
is defined by
$$
G_D f(x) = \int_0^\infty P^D_t  f (x)\, dt = \bE^x \left[ \int_0^{\tau_D}  f(X_t) \,dt \right],
\quad x\in D\cap M_0.
$$
For $f\in {\cal B} (D)$, if $P^D_t |f|(x) < \infty$, we define
$$
P^D_t f (x) = P^D_t f ^+(x) - P^D_t f^-(x)= \bE^x \left[ f(X^D_t)\right];
$$
if $G_D |f| (x)<\infty$, we define
$$
 G_{D} f(x)=G_D f^+ (x)-G_D f^- (x)   =\bE^x\left[ \int_0^{\tau_{D}} f(X_t)\,dt\right].
$$
It is known that $\{P^D_t\}$ is the semigroup associated with the part Dirichlet form $(\sE, \sF_D)$
of $(\sE, \sF)$ on $D$, where $\sF_D$ is the $\sqrt{\sE_1}$-completion of $\sF\cap C_c (D)$.
  Let
  $\sL_{D}$ be the $L^2$-generator of $(\sE, \sF_{D})$ on $L^2(D; \mu)$. The principle eigenvalue $\lambda_1 (D)$
of $-\sL_{D}$ is defined to be
$$
\lambda_1({D}): = \inf \left\{ \sE(f, f):  \ f\in \sF_D \hbox{ with } \| f \|_{L^2(D; \mu)} =1 \right\}.
$$

The following two lemmas are known, see \cite[Lemma 3.2]{GT} and \cite[Lemma 5.1]{GH2} respectively.

\begin{lemma}\label{green1}
If $\sup_{y\in D\setminus {\cal N}} \bE^y \tau_D<\infty$, then $G_{D}$ is a bounded operator on ${\cal B}_b({D})$, and it uniquely extends to the space $L^p({D};\mu)$ with $p=1,2,\infty$ and  enjoys the following norm estimate
$$\|G_{D}\|_{L^p(D;\mu)\to L^p (D;\mu)}\le \sup_{y\in D\setminus {\cal N}} \bE^y \tau_D .
$$
 Moreover, $G_{D}$ is the reverse of the operator $-\sL_{D}$ in $L^2({D};\mu)$, and
$$ \lambda_1({D})^{-1}\le \sup_{y\in D\setminus {\cal N}} \bE^y \tau_D .
$$
\end{lemma}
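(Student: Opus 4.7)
\medskip

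\noindent\textbf{Proof proposal for Lemma \ref{green1}.}
Set $M^*:=\sup_{y\in D\setminus \sN} \bE^y\tau_D$, which is finite by assumption. I would organize the argument into four steps corresponding to (a)~$L^\infty$ bound, (b)~$L^1$ bound, (c)~$L^2$ bound by interpolation, (d)~identification of $G_D$ with $(-\sL_D)^{-1}$ and the consequent eigenvalue estimate.

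\emph{Step 1 ($L^\infty$).} For $f\in\sB_b(D)$ and $x\in D\setminus \sN$, the definition gives
$$
|G_D f(x)|\le \bE^x\!\left[\int_0^{\tau_D}|f(X_t)|\,dt\right]\le \|f\|_\infty\,\bE^x\tau_D\le M^*\|f\|_\infty.
$$
Thus $G_D$ maps $\sB_b(D)$ boundedly into itself, and by the same bound it extends to all of $L^\infty(D;\mu)$ with the claimed norm estimate.

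\emph{Step 2 ($L^1$).} The key is the $\mu$-symmetry of the killed semigroup $\{P_t^D\}$, which I would use together with Fubini. For $0\le f\in L^1(D;\mu)$ and using $P_t^D 1\le 1$,
$$
\int_D G_D f\,d\mu=\int_0^\infty\!\!\int_D P_t^D f(x)\,\mu(dx)\,dt=\int_0^\infty\!\!\int_D f\cdot P_t^D 1\,d\mu\,dt=\int_D f(x)\,\bE^x\tau_D\,\mu(dx)\le M^*\|f\|_1.
$$
Decomposing general $f$ into $f_+-f_-$ yields $\|G_D f\|_{L^1}\le M^*\|f\|_{L^1}$ on the dense subspace $L^1\cap L^\infty$, and I would extend by continuity. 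Uniqueness of the extension follows from density of $L^1\cap L^\infty$ in $L^1$.

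\emph{Step 3 ($L^2$) and inversion.} Riesz--Thorin interpolation between the $L^1$ and $L^\infty$ bounds gives $\|G_D\|_{L^2\to L^2}\le M^*$; alternatively, one can argue by duality using that $G_D$ is symmetric on $L^2$, a consequence of the symmetry of $P_t^D$. For the identification with $(-\sL_D)^{-1}$, I would invoke the spectral representation: $(\sE,\sF_D)$ is a (closed) Dirichlet form on $L^2(D;\mu)$, the operator $-\sL_D$ is non-negative self-adjoint, $P_t^D=e^{t\sL_D}$, and the finiteness of $\|G_D\|_{L^2\to L^2}$ forces $\lambda_1(D)>0$, so
$$
G_D=\int_0^\infty e^{t\sL_D}\,dt=(-\sL_D)^{-1}\quad\text{on } L^2(D;\mu),
$$
the integral being justified in the sense of spectral calculus on $[\lambda_1(D),\infty)$.

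\emph{Step 4 (eigenvalue bound).} Since $(-\sL_D)^{-1}=G_D$ is a bounded, non-negative self-adjoint operator on $L^2(D;\mu)$, its operator norm equals the supremum of its spectrum, which is $\lambda_1(D)^{-1}$. Combining with Step~3 yields $\lambda_1(D)^{-1}=\|G_D\|_{L^2\to L^2}\le M^*$, as claimed.

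The routine parts are Steps~1 and~4; the main technical point is Step~2, where one must be careful to apply Fubini only for nonnegative integrands and then extend to signed $f$, and to verify that the extended operator still agrees with the probabilistic formula on a suitable dense subset. The identification $G_D=(-\sL_D)^{-1}$ in Step~3 is standard but relies on the quasi-regularity/associateness of the killed Dirichlet form $(\sE,\sF_D)$ with the part process $X^D$, which is why the hypothesis $\bE^y\tau_D<\infty$ (ensuring $\lambda_1(D)>0$) is essential.
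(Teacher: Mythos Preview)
The paper does not supply its own proof of this lemma; it simply cites \cite[Lemma 3.2]{GT}. Your argument is correct and is exactly the standard one: the $L^\infty$ bound is immediate, the $L^1$ bound comes from the symmetry of $P_t^D$ together with $\int_0^\infty P_t^D 1(x)\,dt=\bE^x\tau_D$, interpolation (or duality) gives the $L^2$ bound, and spectral calculus then yields $G_D=(-\sL_D)^{-1}$ and $\|G_D\|_{L^2\to L^2}=\lambda_1(D)^{-1}$. Nothing is missing.
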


\begin{lemma}\label{gh} Let $(\sE,\sF)$ be a regular Dirichlet form in $L^2(M; \mu)$, and  ${D} \subset M$  an open set such that
$\lambda_1({D})>0$. Then the  following hold:
\begin{itemize}
\item[{\rm (i)}] For any $f\in L^2({D; \mu})$,
$$\|G_{D} f\|_{2}\le \lambda_1({D})^{-1} \|f\|_{2}.$$
\item[{\rm (ii)}] For any $f\in L^2({D}; \mu )$,  $P^D_t f \in \sF_D$ for every $t>0$ and  $G_{D} f\in \sF_{D}$. Moreover,
$$\sE (G_{D} f, g)= \langle f,g\rangle  \quad \hbox{for every } g\in \sF_{D}.
$$
\end{itemize}
\end{lemma}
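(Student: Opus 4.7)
The plan is to leverage the spectral calculus for the nonnegative self-adjoint operator $-\sL_{D}$ on $L^2(D;\mu)$. By the variational definition of $\lambda_1(D)$, the spectrum satisfies $\sigma(-\sL_{D}) \subset [\lambda_1(D), \infty)$; if $\{E_\lambda\}_{\lambda\ge 0}$ denotes the associated spectral resolution, then $P^D_t f = \int_{\lambda_1(D)}^\infty e^{-\lambda t}\, dE_\lambda f$, which immediately yields the $L^2$ decay $\|P^D_t f\|_2 \le e^{-\lambda_1(D) t}\|f\|_2$. This makes the semigroup integrable in $t$ and paves the way for identifying the probabilistic Green operator with the functional-analytic inverse of $-\sL_{D}$.

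For (i), I would first identify $G_D$ with a spectral integral: Fubini combined with the exponential $L^2$ decay gives
\[
G_D f \;=\; \int_0^\infty P^D_t f\, dt \;=\; \int_{\lambda_1(D)}^\infty \lambda^{-1}\, dE_\lambda f \qquad \text{in } L^2(D;\mu),
\]
which is well-defined precisely because $\lambda_1(D)>0$. Parseval's identity then yields $\|G_D f\|_2 \le \lambda_1(D)^{-1}\|f\|_2$.

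For (ii), the same spectral representation exhibits $G_D f$ as an element of the domain of $\sL_{D}$ with $-\sL_{D} G_D f = f$. Since the domain of $\sL_{D}$ is contained in the form domain $\sF_D$ by general theory of closed symmetric forms, I would conclude $G_D f \in \sF_D$, and the identity
\[
\sE(G_D f, g) \;=\; \langle -\sL_{D} G_D f, g\rangle \;=\; \langle f, g\rangle, \qquad g \in \sF_D,
\]
follows from the very definition of $\sL_{D}$ as the generator attached to $(\sE, \sF_D)$. The same reasoning shows that for every $t>0$, $P^D_t f$ lies in the domain of every power of $-\sL_{D}$, hence a fortiori in $\sF_D$.

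The only subtlety, rather than a genuine obstacle, is reconciling the two definitions of $G_D$: the probabilistic one via $\bE^x \int_0^{\tau_D} f(X_t)\,dt$, defined for $\sE$-q.e.\ $x \in D\setminus \sN$, and the spectral one, an $L^2$ equivalence class. For bounded $f$, Fubini applied to $P^D_t f(x) = \bE^x[f(X_t); t<\tau_D]$ shows the two representatives agree $\mu$-a.e.\ on $D$; one then extends to $f\in L^2$ by density together with the norm bound just established (compare Lemma~\ref{green1}). Once this identification is in place, both (i) and (ii) become abstract consequences of the spectral theorem, requiring no further pathwise analysis of $X$.
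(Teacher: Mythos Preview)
Your argument is correct and is the standard spectral-theoretic route to this result. Note, however, that the paper does not supply its own proof of this lemma: it simply quotes it from \cite[Lemma 5.1]{GH2}, so there is no in-paper argument to compare against. Your write-up could serve as a self-contained justification in place of that citation.
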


 \begin{theorem}\label{P:ehii} Assume that $\VD$, \eqref{pvd1},
\eqref{pvd}, $\E_{\phi,\le}$
 and $\J_{\phi,\le}$ hold. Then $\EHI$ implies $\EHI(\phi)$. \end{theorem}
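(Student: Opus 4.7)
The plan is to decompose $u$ on a suitable intermediate ball $B(x_0,R')$ into a non-negative harmonic part $h$ (to which the classical $\EHI$ can be applied) and a tail part $g$ encoding the negative values of $u$ outside $B(x_0,R)$, and to estimate $g$ directly by $\T_\phi(u_-;x_0,R)$ using the L\'evy system together with $\J_{\phi,\le}$ and $\E_{\phi,\le}$.

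Concretely, let $\delta_1\in(0,1)$ be the constant appearing in $\EHI$, pick $\delta_0\in(0,\delta_1/4)$, and for $r\le \delta_0 R$ set $R':=r/\delta_1\in(0,R/4]$. Define
$$h(x):=\bE^x\bigl[u_+(X_{\tau_{B(x_0,R')}})\bigr],\qquad g(x):=\bE^x\bigl[u_-(X_{\tau_{B(x_0,R')}})\bigr],\quad x\in M_0.$$
Both $h$ and $g$ are non-negative on $M$, agree with $u_+$ and $u_-$ outside $B(x_0,R')$, and are harmonic in $B(x_0,R')$ by the strong Markov property. The uniform integrability in the definition of harmonicity of $u$ on $B(x_0,R)$ yields $u=h-g$ q.e.\ on $B(x_0,R')$, and we may assume $\T_\phi(u_-;x_0,R)<\infty$ since otherwise $\EHI(\phi)$ is trivial. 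Applying $\EHI$ to the non-negative function $h$, harmonic in $B(x_0,R')$, gives $\esssup_{B(x_0,r)}h\le c\,\essinf_{B(x_0,r)}h$, and since $g\ge 0$ and $u=h-g$, this produces
$$\esssup_{B(x_0,r)}u\le c\Bigl(\essinf_{B(x_0,r)}u+\esssup_{B(x_0,r)}g\Bigr).$$

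It then remains to show that $g(x)\le C\phi(x_0,r)\,\T_\phi(u_-;x_0,R)$ for $x\in B(x_0,r)$. Since $u\ge 0$ on $B(x_0,R)$, the function $u_-$ is supported in $B(x_0,R)^c$; the L\'evy system formula of Lemma \ref{Levy-sys} therefore gives
$$g(x)=\bE^x\Bigl[\int_0^{\tau_{B(x_0,R')}}\!\!\int_{B(x_0,R)^c}u_-(y)\,J(X_s,y)\,\mu(dy)\,ds\Bigr].$$
For $X_s\in B(x_0,R')$ and $y\in B(x_0,R)^c$, the constraint $R'\le R/4\le d(x_0,y)/4$ forces $\tfrac12 d(x_0,y)\le d(X_s,y)\le \tfrac32 d(x_0,y)$, and then $\J_{\phi,\le}$ combined with $\VD$ (in the form \eqref{e:vd2}) and \eqref{pvd1}--\eqref{pvd} produces a uniform upper bound $J(X_s,y)\le C[V(x_0,d(x_0,y))\phi(x_0,d(x_0,y))]^{-1}$. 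Pulling this bound through the inner integral and invoking $\E_{\phi,\le}$ together with \eqref{pvd1}--\eqref{pvd} to estimate $\bE^x[\tau_{B(x_0,R')}]\le \bE^x[\tau_{B(x,2R')}]\le c'\phi(x_0,r)$ completes the proof.

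The main obstacle is the uniform transfer of the $V$ and $\phi$ factors in $\J_{\phi,\le}$ from the moving argument $X_s$ to the fixed center $x_0$: this is precisely where the combined use of $\VD$, \eqref{pvd1}, and the quasi-symmetry \eqref{pvd} of $\phi$ is essential, and the choice $R'\le R/4$ is dictated by the need to keep these comparisons uniform in $X_s\in B(x_0,R')$. All other ingredients---the $h$/$g$ decomposition, the non-negativity and harmonicity of $h$ and $g$, and the application of the classical $\EHI$ to $h$---are standard once the setup is fixed.
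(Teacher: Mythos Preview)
Your proof is correct and follows essentially the same route as the paper's. The paper's argument is the same decomposition written more tersely: it applies $\EHI$ directly to the non-negative harmonic function $z\mapsto \bE^z u_+(X_{\tau_{B_r}})$ (your $h$), uses $u(x)=\bE^x u(X_{\tau_{B_r}})=h(x)-g(x)$ to pass from $h$ back to $u$, and bounds the tail term $g(y)=\bE^y u_-(X_{\tau_{B_r}})$ via the L\'evy system, $\J_{\phi,\le}$, $\VD$, \eqref{pvd1}--\eqref{pvd}, and $\E_{\phi,\le}$ exactly as you do.
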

\begin{proof}
Fix $x_0\in M$, and assume that  $u$ is harmonic on $B_R:=B(x_0,R)$ such that $u\ge0 $ on $B_R$. Let $\delta\in(0,1)$ be the constant in $\EHI$. Then, for almost all $x,y\in  B_{\delta r}$  with $0<r\le \delta R$,
\begin{align*}
u(x)&=\bE^xu(X_{\tau_{B_r}})\le \bE^xu_+(X_{\tau_{B_r}})\le c\bE^yu_+(X_{\tau_{B_r}})\\
&=c\left(\bE^yu(X_{\tau_{B_r}})+\bE^yu_-(X_{\tau_{B_r}})\right)\\
&= c u(y)+c
\, \bE^yu_-(X_{\tau_{B_r}}),
\end{align*}
where in the second inequality we used $\EHI$.
Since $u_-=0$ on $B_R$,  by the L\'evy system of $X$ and condition $\J_{\phi,\le}$,
we have for $y\in B_{\delta r}$
\begin{align*}
 \bE^yu_-(X_{\tau_{B_r}})&=\bE^y\left[ \int_0^{\tau_{B_r}}\int_{B_R^c} u_-(z)J(X_s, z)\,\mu(dz)\,ds\right]\\
&=\int_{B_R^c} u_- (z)  \left( G_{B_r} J(\cdot,z) \right) (y)  \,\mu (dz) \\
&\le  c_1 \int_{B_R^c} u_- (z)
\left( G_{B_r} \frac{1}{V(\cdot,d(\cdot,z))\phi(\cdot, d(\cdot,z))}\right) (y)
\,\mu (dz)\\
&\le c_2\int_{B_R^c}  \frac{ u_- (z) }{V(x_0,d(x_0,z))\phi(x_0, d(x_0,z))} \,\mu (dz)
\, G_{B_r}{\bf 1}(y)  \\
&=  c_2\int_{B_R^c}  \frac{u_- (z) }{V(x_0,d(x_0,z))\phi(x_0, d(x_0,z))} \,  \mu (dz)
\, \bE^y\tau_{B_r} \\
& \le c_3\phi(x_0,r)  \int_{B_R^c}  \frac{u_- (z) }{V(x_0,d(x_0,z))\phi(x_0, d(x_0,z))}\, \mu (dz)\\
&= c_3 \phi(x_0,r) \, {\T_\phi} (u_-; x_0, R),
\end{align*}
where the second inequality follows from $\VD$, \eqref{pvd1} and \eqref{pvd}, and in the last inequality we have used $\E_{\phi,\le}$ and \eqref{pvd}.
\qed
\end{proof}

\medskip

From Theorem \ref{P:ehii}, we can deduce the following.

\begin{proposition}\label{P:fk}
{\rm (i)} Assume that $\VD$, \eqref{pvd1} and \eqref{pvd} hold. Then,
$$\EHR+\E_{\phi,\le}\Longrightarrow\FK(\phi).$$

{\rm (ii)} Assume that $\VD$, \eqref{pvd1}, \eqref{pvd}, $\E_{\phi,\le}$ and $\J_{\phi,\le}$ hold. Then $\EHI$ implies $\FK(\phi)$.
\end{proposition}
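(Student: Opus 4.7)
Part (ii) follows from Part (i). Under the hypotheses of (ii), Theorem \ref{P:ehii} yields $\EHI(\phi)$, which trivially implies $\WEHI(\phi)$; since $\VD$ and \eqref{pvd1} are assumed, Theorem \ref{P:holder} then gives $\EHR$. Invoking Part (i) delivers $\FK(\phi)$.

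For Part (i), Lemma \ref{green1} reduces $\FK(\phi)$ to proving
$$
\sup_{y\in D\cap M_0}\bE^y\tau_D\le C\,\phi(x,r)\Big(\frac{\mu(D)}{V(x,r)}\Big)^\nu \qquad (\ast)
$$
for some uniform $C,\nu>0$, every ball $B=B(x,r)$, and every open $D\subset B$. If $\mu(D)\ge V(x,r)/2$, then $(\ast)$ is immediate from $\E_{\phi,\le}$ together with $\bE^y\tau_D\le \bE^y\tau_B$. So the content lies in the regime $\mu(D)\ll V(x,r)$, where the complement $A:=B\setminus D$ satisfies $\mu(A)\ge V(x,r)/2$.

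In that regime, the central step is a hitting-type estimate: there exist $\eta\in(0,1)$ and $c_1>0$, independent of $x,r,D$, such that for every $y\in D\cap M_0$,
$$
\bP^y\big(\tau_D\le c_1\phi(x,r)\big)\ge \eta.
$$
To prove it, set $h(z):=\bE^z\indicator_A(X_{\tau_{B(x,2r)}})$, which is non-negative, bounded by $1$, and harmonic on $B(x,2r)$. One first shows that the average of $h$ on $B(x,r/2)$ is bounded below by a constant multiple of $\mu(A)/V(x,2r)\gtrsim 1$: this uses $\E_{\phi,\le}$ to upper-bound $\bE^z\tau_{B(x,2r)}$ and a L\'evy-system computation controlled by $\VD$ and \eqref{pvd} to lower-bound the expected time spent in $A$ before exiting $B(x,2r)$. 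Coupling this averaged lower bound with the H\"older modulus for $h$ supplied by $\EHR$ on $B(x,r/2)$ forces $h\ge\eta$ pointwise on $D\cap B(x,r/2)$; the bound is extended to the annulus $B\setminus B(x,r/2)$ by a chain of overlapping balls built from $\VD$ and \eqref{pvd1}. The hitting estimate then follows from $h\ge \eta$ on $D$, Markov's inequality, and $\bE^y\tau_{B(x,2r)}\le c\phi(x,r)$.

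Iterating via the strong Markov property gives $\bP^y(\tau_D>kc_1\phi(x,r))\le (1-\eta)^k$ and hence $\bE^y\tau_D\le C'\phi(x,r)$. To upgrade this to $(\ast)$ with a positive exponent $\nu$, I would run the hitting argument at the cascade of scales $\rho_k=r\cdot 2^{-k}$, stopping at the first $k_0$ with $V(x,\rho_{k_0})\asymp \mu(D)$ (this $k_0$ exists by $\VD$). Each scale contributes an extra geometric factor $1-\eta$, and summing with \eqref{pvd1} to compare $\phi(x,\rho_k)$ to $\phi(x,r)$ yields $(\ast)$ with an effective $\nu>0$. The main obstacle is the hitting estimate itself: $\EHR$ supplies only H\"older oscillation control rather than a pointwise lower bound, so positivity must be extracted by coupling an average estimate (from $\E_{\phi,\le}$) with the H\"older modulus, and the nonlocal tail from direct jumps out of $B(x,2r)$ must be uniformly controlled; moreover, the chain-of-balls step is subtle in the absence of connectivity of $M$. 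All of these points rely on the full strength of \eqref{pvd1} and \eqref{pvd} beyond plain $\VD$, with the multi-scale iteration producing $\nu$ being the second delicate ingredient.
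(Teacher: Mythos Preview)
Your derivation of Part~(ii) from Part~(i) via Theorem~\ref{P:ehii} (to obtain $\EHI(\phi)$, hence $\WEHI(\phi)$) and then Theorem~\ref{P:holder} (to obtain $\EHR$) is exactly the paper's argument.

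For Part~(i) the paper gives no self-contained proof; it simply cites \cite[Lemma~4.6]{CKW2}. Your reduction through Lemma~\ref{green1} to the exit-time bound~$(\ast)$ is correct, as is the disposal of the case $\mu(D)\ge V(x,r)/2$ via $\E_{\phi,\le}$ and~\eqref{pvd}. The hitting-estimate step, however, breaks down. As you define it, $h(z)=\bE^z\indicator_A(X_{\tau_{B(x,2r)}})$ is identically zero: $A=B(x,r)\setminus D\subset B(x,2r)$, whereas $X_{\tau_{B(x,2r)}}\in B(x,2r)^c$ almost surely. So the ``average of $h$ bounded below'' claim is vacuously false, and nothing downstream survives. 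The natural candidates to replace $h$---the hitting probability $\bP^z(\sigma_A<\tau_{B(x,2r)})$ or the occupation potential $G_{B(x,2r)}\indicator_A$---are harmonic only on $B(x,2r)\setminus\overline A$, not on all of $B(x,2r)$; since in the regime of interest $\mu(A)\ge V(x,r)/2$, there is no ball of scale comparable to $r$ lying inside $B(x,2r)\setminus A$ on which $\EHR$ could be invoked to propagate a lower bound.

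There is a second structural gap: Part~(i) carries no hypothesis on the jump kernel, so the ``L\'evy-system computation controlled by $\VD$ and~\eqref{pvd}'' has nothing to work with---you have neither $\J_{\phi,\le}$ nor $\J_{\phi,\ge}$ available. Finally, the multi-scale cascade that is supposed to produce the exponent $\nu$ presupposes a scale-uniform hitting estimate that you have not established. As written, the argument for Part~(i) does not go through; the route in \cite{CKW2} proceeds instead through semigroup and heat-kernel considerations rather than a direct hitting bound.
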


\begin{proof} The first required assertion follows from the argument of \cite[Lemma 4.6]{CKW2}. By Theorems \ref{P:ehii} and \ref{P:holder}, we have $\EHI+\E_{\phi,\le}+\J_{\phi,\le}\Longrightarrow \EHR$, which along with the first assertion immediately yields the second one.\qed
\end{proof}

\subsection{$\EHI+\E_\phi+\J_{\phi,\le}\Longrightarrow \PI(\phi)$} \label{S:4.2}

\begin{proposition}\label{P:PI}
{\rm (i)} Suppose that $\VD$, \eqref{pvd1}, \eqref{pvd}, $\EHR$, $\E_\phi$ and $\FK(\phi)$ hold. Then $\PI(\phi)$ holds.\\
{\rm (ii)}
Suppose that $\VD$, \eqref{pvd1}, \eqref{pvd}, $\EHI$, $\E_\phi$ and $\J_{\phi,\le}$ hold. Then $\PI(\phi)$ holds.
\end{proposition}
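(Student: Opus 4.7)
The proposition has a nested structure: part (ii) will reduce to part (i) using results already established in this paper. Under $\EHI + \E_\phi + \J_{\phi,\le}$, Proposition \ref{P:fk}(ii) supplies $\FK(\phi)$; moreover, its argument passes through Theorem \ref{P:ehii} (yielding $\EHI(\phi)$, hence $\WEHI(\phi)$) and then Theorem \ref{P:holder} to furnish $\EHR$. All three hypotheses of (i) are therefore in force under those of (ii), so (i) yields $\PI(\phi)$. I will therefore concentrate on (i).

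For part (i), my plan is a semigroup-based argument. Fix $x_0 \in M$, $r > 0$, set $B = B(x_0, r)$ and $B^* = B(x_0, Kr)$ for a constant $K > 1$ to be determined, and let $t_0 := c_0 \phi(x_0, r)$ for a small $c_0 > 0$. For $f \in \sF_b$ one may assume $\bar f_B = 0$ and decompose
\[
\int_B f^2 \, d\mu \le 2 \int_B (f - P^{B^*}_{t_0} f)^2 \, d\mu + 2 \int_B (P^{B^*}_{t_0} f)^2 \, d\mu.
\]
The first term is controlled by the spectral estimate $\|g - P^{B^*}_{t_0} g\|_2^2 \le 2 t_0 \, \sE(g,g)$ after a cutoff that places $f$ into $\sF_{B^*}$ (the cutoff cost being absorbed by $\VD$, \eqref{pvd1} and \eqref{pvd}). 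For the second term I would establish a near-diagonal Dirichlet heat kernel bound $p^{B^*}_{t_0}(x, y) \ge c/V(x_0, r)$ on a sub-ball of $B$: Lemma \ref{green1} combined with the upper half of $\E_\phi$ gives $\lambda_1(B^*)^{-1} \asymp \phi(x_0, r)$; $\FK(\phi)$ plus Moser iteration provides the matching on-diagonal upper bound $p^{B^*}_t(x, y) \le C/V(x_0, \phi^{-1}(t))$; and the lower bound follows from combining the exit-time lower bound in $\E_\phi$ (which guarantees a positive mass of the process remaining in $B^*$ up to time $t_0$) with $\EHR$. Once this near-diagonal bound is in hand, the mean-zero condition $\bar f_B = 0$ forces $\int_B (P^{B^*}_{t_0} f)^2 \, d\mu \le \tfrac{1}{2}\int_{B^*} f^2 \, d\mu$ for $K$ large and $c_0$ small enough, and a standard doubling-of-scale iteration closes the loop and delivers $\PI(\phi)$ with Poincar\'e constant $C \asymp \phi(x_0, r)$.

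The principal obstacle is the pointwise heat kernel lower bound, since $\EHR$ supplies regularity for \emph{elliptic} harmonic functions while $y \mapsto p^{B^*}_{t_0}(x, y)$ is only parabolic. My plan is to route the argument through the time-integrated Dirichlet resolvent $G^{B^*}_{1/t_0} \mathbf{1}_A := \int_0^\infty e^{-t/t_0} P^{B^*}_t \mathbf{1}_A \, dt$, which by Lemma \ref{gh} is $\sE$-harmonic on $B^* \setminus A$: $\EHR$ then provides the required H\"older oscillation for this resolvent, and one recovers pointwise information about the semigroup at time $t_0$ by matching scales $\alpha = 1/t_0 \asymp 1/\phi(x_0, r)$ and using the $\E_\phi$-controlled excursion times to replace semigroup values by resolvent averages up to bounded multiplicative error. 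Careful bookkeeping of constants uniform in $x_0$ and $r$, ensured by the regularity \eqref{pvd1}, \eqref{pvd} of $\phi$ together with $\VD$, then completes the scheme.
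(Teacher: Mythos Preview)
Your reduction of (ii) to (i) and your overall architecture for (i) --- obtain a near-diagonal lower bound for the Dirichlet heat kernel $p^{B^*}(t_0,x,y)\ge c/V(x_0,r)$ and then deduce $\PI(\phi)$ from it --- both match the paper, which carries this out by invoking the arguments of \cite[Lemma~4.8, Proposition~4.9 and Proposition~3.5(i)]{CKW2}. Your final step (the semigroup decomposition of $\int_B f^2$) and the paper's citation of \cite[Proposition~3.5(i)]{CKW2} are equivalent standard routes once the near-diagonal lower bound is in hand.

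The gap is in your route to heat-kernel regularity. You claim that the $\alpha$-resolvent $G^{B^*}_{1/t_0}{\bf 1}_A=\int_0^\infty e^{-t/t_0}P^{B^*}_t{\bf 1}_A\,dt$ is $\sE$-harmonic on $B^*\setminus A$ by Lemma~\ref{gh}; this is false. Lemma~\ref{gh} concerns the Green operator $G_D$ (the $0$-resolvent), for which indeed $\sE(G_D{\bf 1}_A,g)=\langle{\bf 1}_A,g\rangle=0$ whenever $g\in\sF_D$ is supported in $D\setminus A$. For $\alpha>0$ one has instead $(\alpha-\sL_{B^*})G^{B^*}_\alpha{\bf 1}_A={\bf 1}_A$, so on $B^*\setminus A$ the function satisfies $\sL_{B^*}G^{B^*}_\alpha{\bf 1}_A=\alpha\,G^{B^*}_\alpha{\bf 1}_A\neq 0$; it is $\alpha$-harmonic, not harmonic, and $\EHR$ does not apply. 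If you replace $G^{B^*}_{1/t_0}$ by the $0$-resolvent $G^{B^*}$ the harmonicity claim becomes valid, but then the time parameter $t_0$ disappears and there is no clear mechanism to recover pointwise information about $P^{B^*}_{t_0}$ from an integral over all times. The paper bypasses the resolvent entirely: from the on-diagonal upper bound supplied by $\FK(\phi)$, together with $\EHR$ and $\J_{\phi,\le}$, it obtains directly a spatial H\"older estimate $|p^{B}(t,x,x)-p^{B}(t,x,y)|\le C\,(r/\phi^{-1}(x,t))^\kappa\,V(x,\phi^{-1}(x,t))^{-1}$ for $B=B(x,\phi^{-1}(x,t))$, following \cite[Lemma~4.8]{CKW2}; combined with the survival bound $\bP^x(\tau_{B(x,r)}>\delta\phi(x,r))\ge c$ from $\E_\phi$ this gives the near-diagonal lower bound as in \cite[Proposition~4.9]{CKW2}. (Note that the paper's argument here uses $\J_{\phi,\le}$, which is a hypothesis of (ii) but not of (i) as stated.)
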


\begin{proof}
By Theorems \ref{P:ehii}, \ref{P:holder} and Proposition \ref{P:fk}, we have $\EHI+\E_{\phi,\le}+\J_{\phi,\le}\Longrightarrow \EHR+\FK(\phi)$, so it is enough to prove (i).
According to the proofs of \cite[Lemmas 5.4 and 5.5]{GH1} and choosing $a=CV(x,r)^\nu/\phi(x,r)$ in that paper, we can get from $\FK(\phi)$ that for any ball $B=B(x,r)$ with $x\in M$ and $r>0$, the Dirichlet heat kernel $p^B(t,x,y)$ exists, and  there exists a constant $C_1>0$ such that
$$\esssup_{y,z\in B}p^B(t,y,z)\le \frac{C_1}{V(x,r)}\left(\frac{\phi(x,r)}{t}\right)^{1/\nu},\quad t>0.$$
Using this estimate, $\J_{\phi,\le}$ and $\EHR$, and following the argument of \cite[Lemma 4.8]{CKW2}, there are constants $\kappa, C_2>0$ such that for any $x\in M_0$, $t>0$, $0<{r}\le 2^{-(\beta_1+\theta)/\beta_1} \phi^{-1}(x,t)$ and $y\in B(x,{r})\backslash \mathcal{N}$,
$$
|p^{B(x,\phi^{-1}(x,t))}(t, x,x)-p^{B(x,\phi^{-1}(x,t))}(t, x,y)|
\le \left( \frac{{r}}{\phi^{-1}(x,t) }\right)^\kappa \frac{C_2}{V(x,\phi^{-1}(x,t))},
$$
where $\beta_1$ is the constant in \eqref{pvd1} and $\theta$ is the H\"older exponent in {\rm EHR}.

Since for any $t>0$, $\tau_{B(x,r)}\le t+(\tau_{B(x,r)}-t){\bf 1}_{\{\tau_{B(x,r)}\ge t\}},$ we have, by the Markov property, $\E_{\phi,\le}$ and \eqref{pvd},
\begin{align*}
 \bE^x\tau_{B(x,r)}&\le t+ \bE^x\Big[{\bf 1}_{\{\tau_{B(x,r)}>t\}} \bE^{X_t}[ \tau_{B(x,r)}-t ]\Big]
 \le t+ \bP^x(\tau_{B(x,r)}>t) \sup_{z\in B(x,r)} \bE^{z}\tau_{B(x,r)}\\
 &\le t+ \bP^x(\tau_{B(x,r)}>t) \sup_{z\in B(x,r)} \bE^{z}\tau_{B(z,2r)}
 \le t+ c_2\bP^x(\tau_{B(x,r)}>t)  \phi(x,r).
\end{align*}
Then, by $\E_\phi$, for all $x\in M_0$, $$c_1\phi(x,r)\le \bE^x\tau_{B(x,r)}\le  t+  c_2\bP^x(\tau_{B(x,r)}>t) \phi(x,r)$$ and so
$$
 \bP^x(\tau_{B(x,r)} \leq t) \le 1- \frac{c_1\phi(x,r)-t}{c_2\phi(x,r)}.$$
In particular, we can choose a constant $\delta>0$ such that for all $r>0$ and all $x\in M_0$,
\begin{equation}\label{e:4.1}
\bP^x \left(\tau_{B(x,r)}\le \delta \phi(x,r) \right)\le
1-\frac{c_1}{2c_2}.
\end{equation}

Combining with all the conclusions above, we can see from the argument of \cite[Proposition 4.9]{CKW2} that there exist $\eps\in (0,1)$ and $c_3>0$ such that for any $x_0\in M$, $r>0$,
$0<t\le \phi(x_0,\eps r)$ and $B=B(x_0,r)$,
$$ p^{B}(t, x
,y )\ge \frac{c_3}{V(x_0, \phi^{-1}(x_0,t))},\quad x ,y\in B(x_0,
\eps\phi^{-1}(x_0,t)) \cap M_0,
$$ which yields $\PI(\phi)$ by some standard arguments, see \cite[Proposition 3.5(i)]{CKW2}.\qed \end{proof}

At the end of this section, we present a consequence of $\E_\phi$ and $\J_{\phi,\le}$ (without $\EHI$).

 \begin{proposition}\label{P:CSJ}
 Under $\VD$, \eqref{pvd1} and \eqref{pvd}, $\E_\phi$ and $\J_{\phi,\le}$ imply $\CSJ(\phi)$. \end{proposition}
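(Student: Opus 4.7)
The plan is to adapt \cite[Proposition 3.1]{CKW1}, where $\CSJ(\phi)$ is derived from $\E_\phi$ and $\J_{\phi,\le}$ in the state-independent case, to the present setting where $\phi$ depends on the base point. The approach has three ingredients: construct an explicit cutoff function $\vp \in \sF_b$ for $B(x_0,R) \subset B(x_0,R+r)$ from mean exit-time potentials, bound the density of the energy measure $d\Gamma(\vp,\vp)$ using $\E_\phi$, and decompose
\[
\int f^2\,d\Gamma(\vp,\vp) \;=\; \iint f^2(x)\big(\vp(x)-\vp(y)\big)^2 J(dx,dy)
\]
by the length of jumps. Long jumps will be handled by $\IJ_{\phi,\le}$ (which follows from $\J_{\phi,\le}$ by Remark \ref{intelem}), while short jumps will be handled by the energy bound on $\vp$, with the nonlocal term $C_1\int_{U\times U^*}(f(x)-f(y))^2 J(dx,dy)$ absorbing excess short-jump energy when necessary.

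First I would build $\vp$ as a telescoping sum of normalized exit-time functions on a dyadic sequence of nested annuli inside $B(x_0,R+r)\setminus\overline{B(x_0,R)}$. The two-sided bound $\E_\phi$ ensures that each piece is approximately $1$ on its annulus with energy measure density of order $1/\phi(x_0,\cdot)$ at the corresponding scale; the geometric decay provided by \eqref{pvd1} then makes the sum converge to a $\vp \in \sF_b$ whose energy measure has density $d\Gamma(\vp,\vp)/d\mu$ of order $1/\phi(x_0,r)$ on a neighborhood of $U$. The state-dependence of $\phi$ does not obstruct this because \eqref{pvd} gives $\phi(y,\cdot)\asymp\phi(x_0,\cdot)$ for all $y$ in any bounded neighborhood of $x_0$, so $\phi(x_0,\cdot)$-factors can be pulled through arguments that originally used a constant $\phi$.

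Given such a $\vp$, split the decomposition by the position of $y$. For any $x\in B(x_0,R+(1+C_0)r)$ and any $y\notin U^*$ with $\vp(y)\ne\vp(x)$, the geometry of $U^*$ forces $d(x,y)\ge C_0 r$; combined with $|\vp|\le 1$ and $\IJ_{\phi,\le}$, this yields
\[
\int_{y\notin U^*}(\vp(x)-\vp(y))^2 J(x,dy) \;\le\; J(x,B(x,C_0 r)^c) \;\le\; \frac{c}{\phi(x_0,r)},
\]
which, after integrating $f^2(x)\,\mu(dx)$, feeds into the $(C_2/\phi(x_0,r))\int f^2\,d\mu$ term. For $(x,y)\in U\times U^*$ the density bound from the construction already gives $\iint_{U\times U^*} f^2(x)(\vp(x)-\vp(y))^2\,J(dx,dy)\le (C/\phi(x_0,r))\int_U f^2\,d\mu$; in the diffusive regime $\beta_2\ge 2$, where the dyadic sum underlying the construction may fail to close uniformly, the excess is absorbed via a Leibniz-type identity such as $f(x)(\vp(x)-\vp(y)) = ((f\vp)(x)-(f\vp)(y)) - \vp(y)(f(x)-f(y))$, which produces a piece bounded by a constant multiple of $(f(x)-f(y))^2$ (feeding into the nonlocal right-hand side of $\CSJ(\phi)$) and a residual that is handled by the symmetry of $J$ together with the density bound on $\Gamma(\vp,\vp)$.

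The main obstacle is the uniform density bound on $d\Gamma(\vp,\vp)/d\mu$ in the construction: the constants in the telescoping argument must stay uniform in $x_0$, $R$ and $r$, and the iteration must be balanced so that \eqref{pvd1} and \eqref{pvd} together suffice to keep the dyadic energy contributions summable across scales. A secondary technical point is ensuring that the Leibniz splitting in the short-jump step does not reintroduce the original integral $\int f^2\,d\Gamma(\vp,\vp)$ with an unhelpful constant; the symmetry of $J$ must be exploited carefully so that the residual term genuinely folds back into the $L^2$ term. Once these points are settled, the remaining steps are essentially bookkeeping.
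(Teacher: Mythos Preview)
Your proposal is essentially correct and follows the same route as the paper, which also reduces the proof to the state-independent argument in \cite{CKW1} with $\phi(r)$ replaced by $\phi(x_0,r)$, relying on \eqref{pvd} to justify the substitution. One small difference: the paper first extracts from $\E_\phi$ the survival estimate
\[
\bP^x\big(\tau_{B(x,r)}\le \delta_0\,\phi(x,r)\big)\le \eps_0
\]
(this is \eqref{e:4.1}, obtained via the Markov property as in the proof of Proposition~\ref{P:PI}) and then invokes \cite[Subsection~3.2]{CKW1}, whereas you propose to work directly with the mean exit-time potentials and cite \cite[Proposition~3.1]{CKW1}. The survival estimate is what the \cite{CKW1} construction actually takes as input to produce cutoff functions with the required pointwise carr\'e-du-champ bound, so you should make that intermediate step explicit; otherwise your ``density bound $d\Gamma(\vp,\vp)/d\mu \lesssim 1/\phi(x_0,r)$'' is asserted rather than derived, and for non-local forms this pointwise bound does not follow from the global energy identity $\sE(g,g)=\int g\,d\mu$ alone.
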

\begin{proof}
As shown in \eqref{e:4.1},
under $\VD$, \eqref{pvd1} and \eqref{pvd}, $\E_\phi$ implies that
there are constants $\delta_0 >0$ and $0<\eps_0 <1$ such that for all $r>0$ and all $x\in M_0$,
$$
\bP^x(\tau_{B(x,r)}\le \delta_0 \phi(x,r))\le  \eps_0.
$$
 Having this estimate at hand with $\J_{\phi,\le}$, and following the arguments in \cite[Subsection 3.2]{CKW1} by replacing $\phi(r)$ with $\phi(x_0,r)$, we can prove the desired assertion.\qed
 \end{proof}

 \section{Exit time and relative capacity }\label{S:5}

 In this section, we study the relation between two-sided mean exit time estimates
 and two-sided relative
capacitary estimates.

 \subsection{From mean exit time estimates $\E_\phi$  to relative capacitary estimates}\label{S:5.1}

Recall that for open subsets $A$ and $B$ of $M$ with $A\Subset B$, we define
\[
 \mbox{{\rm Cap}} (A,B)=\inf \left\{\sE(u,u): u\in \sF, \, u=1
 \hbox{ $\sE$-q.e. on } A  \hbox{ and } u=0
  \hbox{ $\sE$-q.e. on } B^c \right\}.
\]
Note that $\mbox{{\rm Cap}} (A,B)$ is increasing
in $A$ but decreasing in $B$.

 \begin{proposition}\label{P:ec}Under $\VD$, \eqref{pvd1} and \eqref{pvd}, if $\E_\phi$ holds, then for any $B_r=B(x_0,r)$ with some $x_0\in M$ and $r>0$,
$$\Ca(B_{r/2}, B_r)\asymp \frac{ V(x_0,r)}{\phi(x_0,r)}.$$  \end{proposition}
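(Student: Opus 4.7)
The plan is to prove the two bounds on $\Ca(B_{r/2}, B_r)$ separately, each as a short consequence of $\E_\phi$ together with the variational characterizations of capacity and the principal Dirichlet eigenvalue. Both directions will compare competitors in the infimum defining $\Ca(B_{r/2},B_r)$ with the Green potential
\[
v(x) := G_{B_r}\indicator_{B_r}(x) = \bE^x\tau_{B_r},
\]
which by Lemma \ref{gh}(ii) (applicable since $\E_{\phi,\le}$ and Lemma \ref{green1} give $\lambda_1(B_r) > 0$) belongs to $\sF_{B_r}$ and satisfies $\sE(v,v) = \int_{B_r} v\,d\mu$.

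For the lower bound, I would observe that any admissible $u$ in the definition of $\Ca(B_{r/2},B_r)$ lies in $\sF_{B_r}$, so by the variational formula for the principal eigenvalue,
\[
\sE(u,u) \ge \lambda_1(B_r)\,\|u\|_2^2 \ge \lambda_1(B_r)\,\mu(B_{r/2}).
\]
Lemma \ref{green1} bounds $\lambda_1(B_r)^{-1}$ by $\sup_{y\in B_r\setminus\sN}\bE^y\tau_{B_r}$, and $\E_{\phi,\le}$ combined with \eqref{pvd1} and \eqref{pvd} shows this supremum is at most $c\phi(x_0,r)$ (since $B_r\subset B(y,2r)$ for any $y\in B_r$, so $\bE^y\tau_{B_r}\le \bE^y\tau_{B(y,2r)}\le c_1\phi(y,2r)\lesssim \phi(x_0,r)$). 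Together with $\mu(B_{r/2})\gtrsim V(x_0,r)$ from $\VD$, this yields the desired lower bound.

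For the upper bound, I would use the truncated Green potential $w := (v\wedge m)/m$ with $m := c\phi(x_0,r)$ chosen so that $v \ge m$ on $B_{r/2}$. The inequality $v \ge m$ on $B_{r/2}$ is obtained from $\E_{\phi,\ge}$ applied to $B(x,r/2)\subset B_r$ for $x\in B_{r/2}$, since $v(x) \ge \bE^x\tau_{B(x,r/2)} \ge c_2\phi(x,r/2)$, which by \eqref{pvd} and \eqref{pvd1} is comparable to $\phi(x_0,r)$. Because $t\mapsto (t\wedge m)/m$ is a normal contraction that vanishes at $0$, the Markovian property of $(\sE,\sF)$ guarantees $w\in\sF_{B_r}$ with $\sE(w,w)\le m^{-2}\sE(v,v)$. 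Using $\sE(v,v) = \int_{B_r} v\,d\mu \le \mu(B_r)\,\|v\|_\infty$ from Lemma \ref{gh}(ii), and bounding $\|v\|_\infty \le C\phi(x_0,r)$ exactly as in the lower bound step, I obtain
\[
\sE(w,w) \le \frac{\mu(B_r)\cdot C\phi(x_0,r)}{m^2} \le \frac{C' V(x_0,r)}{\phi(x_0,r)}.
\]
Since $w=1$ on $B_{r/2}$ and $w=0$ on $B_r^c$, this gives the upper bound on $\Ca(B_{r/2},B_r)$.

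I do not foresee a substantive obstacle; the only items that warrant care are verifying that $v$ lies in $\sF_{B_r}$ (which uses $\E_{\phi,\le}$ via Lemmas \ref{green1} and \ref{gh}) and coordinating the constants in the two-sided bound on $\bE^\cdot\tau_{B_r}$ with the spatial variation of $\phi$ permitted by \eqref{pvd}. The conceptual content is that the mean exit time, viewed as a potential, is simultaneously bounded above on $B_r$ and bounded below on $B_{r/2}$ by the same scale $\phi(x_0,r)$, so a single truncation yields a cutoff with the correct Dirichlet energy.
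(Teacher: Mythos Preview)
Your proof is correct. The upper bound is essentially identical to the paper's: both truncate the Green potential $v=G_{B_r}\indicator_{B_r}$ at its infimum over $B_{r/2}$ (your $m$ is just a fixed constant comparable to $\inf_{B_{r/2}}v$), use the Markovian contraction property, and bound $\sE(v,v)=\int_{B_r}v\,d\mu$ via $\E_{\phi,\le}$.

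The lower bound takes a genuinely different route. You go through the principal eigenvalue: $\sE(u,u)\ge\lambda_1(B_r)\mu(B_{r/2})$ for any admissible $u$, then invoke Lemma~\ref{green1} to bound $\lambda_1(B_r)^{-1}$ by the mean exit time. The paper instead establishes the variational identity
\[
\int_{B_r} v\,d\mu=\sup\Big\{\big({\textstyle\int_{B_r}u\,d\mu}\big)^2/\sE(u,u): u\in\sF_{B_r}\Big\}
\]
via Cauchy--Schwarz against $G_{B_r}\indicator$, and then tests with $u$ satisfying $u|_{B_{r/2}}=1$ to get $\Ca(B_{r/2},B_r)\ge V(x_0,r/2)^2/\int_{B_r}v\,d\mu$. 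Your argument is more direct, treating Lemma~\ref{green1} as a black box; the paper's argument is more self-contained and keeps both bounds tied to the single object $\int_{B_r}v\,d\mu$. Either way the content is the same Cauchy--Schwarz inequality underlying the eigenvalue estimate, so neither approach buys anything the other does not.
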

\begin{proof}Throughout the proof, define
$g(x)=\bE^x\tau_{B_r}=G_{B_r}{\bf1}$.
Set $$u=\frac{g}{\inf_{B_{r/2}} g} \wedge 1.$$ Then, $u|_{B_{r/2}}=1$, $u|_{B_r^c}=0$ and
$$\sE(u,u)\le \frac{1}{(\inf_{B_{r/2}} g)^2}\sE(g,g)= \frac{\int_{B_r}g\,d\mu}{(\inf_{B_{r/2}}g)^2}\le \frac{c_1V(x_0,r)}{\phi(x_0,r)},$$ where the equality follows from the fact that $\sL_{B_r} g=-1$, and in the second inequality we have used $\E_{\phi}$, \eqref{pvd1} and \eqref{pvd}.
Hence, $$\Ca(B_{r/2},B_r)\le \sE(u,u)\le \frac{c_1V(x_0,r)}{\phi(x_0,r)}.$$

On the other hand, since
$$\frac{\left(\int_{B_r} g\,d\mu\right)^2}{\sE(g,g)} =\frac{\left(\int_{B_r} g\,d\mu\right)^2}{\int_{B_r} g\,d\mu}=\int_{B_r} g\,d\mu,$$ and
for any $u\in \sF_{B_r}$ $$\int_{B_r}u\,d\mu=\sE(u, G_{B_r}{\bf 1})\le \sqrt{\sE(u,u)}\sqrt{\sE(G_{B_r}{\bf 1},G_{B_r}{\bf 1} )}
=\sqrt{\sE(u,u)}\sqrt{\int_{B_r} g\,d\mu},$$ we have
$$\int_{B_r} g\,d\mu=\sup\left\{ \frac{\left(\int_{B_r} u\,d\mu\right)^2}{\sE(u,u)}: u\in \sF_{B_r} \right\}.$$
Applying the inequality above with $\E_\phi$, $\VD$, \eqref{pvd1} and \eqref{pvd}, we find that
\begin{align*}c_2 V(x_0,r) \phi(x_0,r)\ge&\int_{B_r} g\,d\mu =\sup\left\{ \frac{\left(\int_{B_r} u\,d\mu\right)^2}{\sE(u,u)}: u\in \sF_{B_r} \right\}\\
\ge& V(x_0,r/2)^2\sup\left\{ \frac{1}{\sE(u,u)}: u\in \sF_{B_r}, u|_{B_{r/2}}=1 \right\}\\
=&  \frac{c_3V(x_0,r)^2}{\Ca(B_{r/2}, B_r)}. \end{align*} Hence,
$$\Ca(B_{r/2}, B_r)\ge\frac{ c_4 V(x_0,r)}{\phi(x_0,r)}.$$
The proof is complete. \qed\end{proof}

\subsection{From  capacitary estimates to mean exit time estimates
$\E_\phi$}  \label{S:5.2}

In this subsection, we assume that Assumptions  \ref{A1}
and $\ref{A1-b}$ hold,  and will prove that
$$
\EHI+ \J_{\Ex,\le}\Longrightarrow \E_\Ex,
$$
which will yield Theorem \ref{C:1.12}. Recall
that the RVD condition \eqref{e:rvd} is equivalent to the existence of
${l_\mu} >1$ and ${\wt c_\mu}>1$ so that
$$
 V(x,{l_\mu}  r) \geq  {\wt c_\mu} V(x,r) \quad \hbox{for all } x\in M \hbox{ and } r>0,
$$ which implies that
\be\label{eq:vdconq}
\mu\big(B(x,{l_\mu} r)\setminus B(x,r)\big)>0 \quad \hbox{for each } x\in M \hbox{ and } r>0  .
\ee
For any set $A\subset M$, define
its first hitting time $\sigma_A :=\inf\{t>0: X_t\in A\}.$
 Recall that for two relatively compact open sets $A$ and $ B$ of $M$ with $A\Subset B$,
 $\Ca (A, B)$ is the $0$-order capacity in the transient Dirichlet form $(\sE, \sF_B)$, which is
 the Dirichlet form in $L^2(B; \mu)$ of the subprocess $X^B$ of the symmetric Hunt process $X$ killed upon leaving
 $B$. Thus  there exists a unique smooth measure $\nu:=\nu_{A,B}$ with ${\rm supp} [\nu] \subset \overline{A}$ such that
 $G_B \nu$ is the $0$-order equilibrium potential of $A$ in $(\sE, \sF_B)$ and
$\Ca(A,B)=\nu(\overline A)$, see e.g.  \cite[Page 87--88 and (3.4.3)]{CF} or the $0$-order version of \cite[(2.2.13)]{FOT}.
Here $G_B \nu (x):= \int_B G_B(x, y) \,\nu (dy)$.
This measure $\nu$ is called the relative capacitary measure  of $A$ in $B$. In particular, according to \cite[Corollary 3.4.3]{CF}
or \cite[Theorem 4.3.3]{FOT},
$$
G_B \nu (x)= \bP^x(\sigma_A<\tau_B) \quad  \hbox{for $\sE$-q.e. } x\in M.
$$

\begin{lemma}\label{L:ug}Assume that Assumptions $\ref{A1}$,
$\ref{A1-b}$ and $\EHI$ hold. Then
 $\E_{\Ex, \le}$ holds; that is,
there is a constant $c_1>0$ such that for almost all $x\in M$ and any $r>0$,  \begin{equation}\label{e-ug1}\bE^x\tau_{B(x,r)}\le c_1 \Ex(x,r).\end{equation}\end{lemma}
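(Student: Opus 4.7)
The plan is to bound $\bE^x \tau_{B(x,r)}$ above by $c\, \Ex(x,r)$ in two stages: first, derive a $\nu$-averaged exit-time estimate from a potential-theoretic identity for the capacitary measure; second, use EHI to upgrade it into a pointwise estimate at $x$.

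Set $A = B(x,r)$, $D = B(x,2r)$, and let $\nu := \nu_{A,D}$ denote the relative capacitary measure of $A$ in $D$: it is supported in $\overline{A}$ with total mass $\Ca(A,D)$, and its Green potential satisfies $G_D\nu(y) = \bP^y(\sigma_A < \tau_D) = 1$ for $\sE$-q.e.\ $y \in A$ and $G_D\nu \le 1$ on $D$. Integrating the identity $G_D\nu \equiv 1$ over $A$ against $\mu$ and using the symmetry $G_D(y,z) = G_D(z,y)$ from Assumption~\ref{A1-b} together with Fubini, I would obtain
\[
V(x,r) = \int_A G_D\nu \, d\mu \le \int_D G_D\nu \, d\mu = \int_{\overline{A}}\Bigl(\int_D G_D(y,z)\,\mu(dy)\Bigr)\,\nu(dz) = \int_{\overline{A}} \bE^z\tau_D \, \nu(dz),
\]
while the same chain is bounded above by $V(x,2r) \le c\,V(x,r)$ via $G_D\nu \le 1$ and VD. Dividing by $\nu(\overline{A}) = \Ca(A,D)$ and recalling $\Ex(x,r) = V(x,r)/\Ca(A,D)$, this yields that the $\nu$-average of $\bE^z\tau_D$ over $\overline{A}$ is at most $c\,\Ex(x,r)$; a Chebyshev argument then produces a subset $S \subseteq \overline{A}$ with $\nu(S) \ge \tfrac12 \Ca(A,D)$ on which $\bE^z\tau_D \le 2c\,\Ex(x,r)$.

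The main obstacle is the second stage: lifting this estimate on $S$ to the specific point $x$. The function $y \mapsto \bE^y\tau_D$ is only superharmonic (one has $-\sL\bE^{\cdot}\tau_D = 1$ on $D$), not harmonic, so EHI cannot be applied to it directly. My plan is to pass through an auxiliary \emph{harmonic} function, namely the hitting-probability potential $h(y) := \bP^y(\sigma_S < \tau_{B(x,4r)})$, which is harmonic on $B(x,4r)\setminus S$ and non-negative on all of $M$. Applying the scale-invariant form of EHI---obtained from EHI by chaining under Assumption~\ref{A1}, in the spirit of \cite{B,BM}---together with the sub-capacitary principle $\Ca(S,D) \ge \nu(S) \ge \tfrac12 \Ca(A,D)$ and the capacity comparisons in Assumption~\ref{A1}(ii)--(iii), one should then conclude both that $\bP^x(\sigma_S < \tau_D) \ge c_0 > 0$ and that $\bE^x[\sigma_S \wedge \tau_D] \le c_1\,\Ex(x,r)$, the latter by re-running the capacitary computation of the first stage with $S$ in place of $A$.

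Finally, combining these estimates by the strong Markov property at the stopping time $\sigma_S \wedge \tau_D$ gives
\[
\bE^x \tau_{B(x,r)} \le \bE^x \tau_D \le \bE^x[\sigma_S \wedge \tau_D] + \sup_{z \in S}\bE^z \tau_D \le c_1\,\Ex(x,r) + 2c\,\Ex(x,r),
\]
which yields $\E_{\Ex,\le}$. I expect the genuine difficulty to lie in controlling $h$ at $x$ quantitatively, since the chaining-based scale-invariant Harnack inequality is where Assumption~\ref{A1} is essential and is the reason no $\J_{\Ex}$ hypothesis is needed.
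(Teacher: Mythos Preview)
Your first stage is correct and clean: the identity $\int_D G_D\nu\,d\mu=\int_{\overline A}\bE^z\tau_D\,\nu(dz)$ together with $G_D\nu\le 1$ and $\VD$ does give the $\nu$-averaged bound $\tfrac{1}{\nu(\overline A)}\int\bE^z\tau_D\,\nu(dz)\le c\,\Ex(x,r)$, and Chebyshev then produces your set $S$. The difficulty is entirely in the second stage, and there your argument has two related gaps.

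First, the claim $\bE^x[\sigma_S\wedge\tau_D]\le c_1\,\Ex(x,r)$ does not follow from ``re-running the capacitary computation with $S$ in place of $A$.'' That computation only ever produces a bound on a $\nu_S$-\emph{average} of $\bE^{(\cdot)}\tau_D$, never a bound at the specific point $x$. But $\sigma_S\wedge\tau_D=\tau_{D\setminus S}$, so you are asking for a pointwise exit-time bound for the domain $D\setminus S$ at the point $x$---which is precisely the kind of estimate you are trying to prove, now for an irregular domain. This step is circular. Second, your application of $\EHI$ to $h(y)=\bP^y(\sigma_S<\tau_{B(x,4r)})$ is problematic: $h$ is harmonic only on $B(x,4r)\setminus\overline S$, and since $S\subset\overline{B(x,r)}$ may cluster arbitrarily near $x$ (indeed the capacitary measure $\nu$ may charge any part of $\overline{B(x,r)}$), there is in general no ball centered at $x$ on which $h$ is harmonic, so $\EHI$ gives you no control of $h(x)$.

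The paper's proof avoids both issues by applying $\EHI$ to a function that is \emph{automatically} harmonic away from $x$: the Green function $G_{B(x,2r)}(x,\cdot)$. For $y$ in the annulus $B(x,r)\setminus B(x,r/2)$, one has $d(x,y)\ge r/2$, so $G_{B(x,2r)}(x,\cdot)$ is harmonic on $B(y,r/2)$; $\EHI$ there combined with the capacitary measure of the small ball $B(y,\delta r/3)$ yields the pointwise bound $G_{B(x,2r)}(x,y)\le c/\Ca(B(x,r),B(x,2r))$ on the annulus. A strong Markov argument then compares $\int_{B(x,r/2)}G_{B(x,2r)}(x,\cdot)\,d\mu$ with $\int_{B(x,r/2)}G_{B(x,r)}(x,\cdot)\,d\mu$ up to an additive $c\,\Ex(x,r)$ error, and iterating over dyadic radii and summing the geometric series (using the lower scaling of $\Ex$ from Assumption~\ref{A1}(iv)) gives $\bE^x\tau_{B(x,r)}\le c\,\Ex(x,r)$. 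The key conceptual difference is that the paper works with $G_D(x,\cdot)$, whose harmonicity region is the complement of $\{x\}$ and hence contains full balls in the annulus, whereas your auxiliary function $h$ fails to be harmonic exactly where you need it.
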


\begin{proof}
For $x_0\in M$ and $r>0$, let $D=B(x_0,2r)$.
 Let $\delta \in (0,1)$ be the constant in $\EHI$.
For any $y\in B(x_0,r)\setminus B(x_0,r/2)$, let
$\nu$  be the relative capacitary measure for
$B(y,\delta r/3)$
with respect to $G_D(x,y)$.
Then ${\rm supp} ( \nu ) \subset \overline{ B(y,\delta r/3) }$, where $\overline{A}$ denotes the closure of the set $A$ in $M$.
Applying $\EHI$
for $G_D(x_0,\cdot)$ on $B(y,r/2)$,  we have
\begin{equation*}\begin{split}
1 & \ge \bP^{x_0}(\sigma_{B(y,\delta r/3)}<\tau_D)
=\int_{\overline{B(y,\delta r/3)}}
G_D(x_0,z)\,\nu(dz)  \asymp G_D(x_0,y)
\nu(\overline{B(y,\delta r/3)})\\
  & = G_D(x_0,y) \Ca(B(y,\delta r/3),B(x_0,2r)) \ge G_D(x_0,y) \Ca(B(y,\delta r/3),B(y,4r))\\
  & \asymp    G_D(x_0,y) \Ca(B(x_0,r), B(x_0,2r)),\end{split}
\end{equation*}
  where the second inequality follows from the facts that $B(x_0,2r)\subset B(y,4r)$ and $\Ca(A,B)$ is decreasing
  in $B$, and
in the last
step we used \eqref{eq:wondf1-1}. Thus, \begin{equation}\label{*}
   G_{B(x_0, 2r)} (x_0,y)
   \le \frac{c_1}{\Ca(B(x_0,r), B(x_0,2r))}
  \quad\mbox{for all } ~y\in B(x_0,r)\setminus B(x_0,r/2).
	\end{equation}
In particular,   \begin{equation}\label{**}\begin{split}
  \int_{B(x_0,r)}G_{B(x_0,2r)}(x_0,y)\,\mu(dy)\le &\int_{B(x_0,r/2)} G_{B(x_0,2r)}(x,y)\,\mu(dy)\\
  &+  \frac{c_2 V(x_0,r)}{\Ca(B(x_0,r), B(x_0,2r))}.\end{split}\end{equation}

On the other hand, by the strong Markov property of $X^{B(x_0, 2r)}$,  for every $y\in B(x_0,r )$,
 \begin{align*}
 G_{B(x_0,2r)}(x_0,y)& = G_{B(x_0,r)}(x_0,y)+\bE^{y} \left[ G_{B(x_0,2r)}(x_0, X_{\tau_{B(x_0, r)}});   \tau_{B(x_0, r)} < \tau_{B(x_0, 2r)}\right]\\
   &\le G_{B(x_0,r)}(x_0,y)+ \bE^{y} \left[ G_{B(x_0,4r)}(x_0, X_{\tau_{B(x_0, r)}});   \tau_{B(x_0, r)} < \tau_{B(x_0, 2r)}\right]\\
 &\le G_{B(x_0,r)}(x_0,y)+ \frac{c_1}{\Ca(B(x_0,2r), B(x_0,4r))}\\
 &\le G_{B(x_0,r)}(x_0,y)+ \frac{c_3}{\Ca(B(x_0,r), B(x_0,2r))},
 \end{align*}
 where  in the second and the third inequalities, we used
 \eqref{*} and  \eqref{eq:wondf1-1},
 respectively. Hence,
 $$
 \int_{B(x_0,r/2)} G_{B(x_0,2r)}(x_0,y)\,\mu(dy)\le \int_{B(x_0,r/2)} G_{B(x_0,r)}(x_0,y)\,\mu(dy)+ \frac{c_4V(x_0,r)}{\Ca(B(x_0,r), B(x_0,2r))}.
 $$
 This together  with \eqref{**} yields that
\begin{align*}
\int_{B(x_0,r)}G_{B(x_0,2r)}(x_0,y)\,\mu(dy) &\le \int_{B(x_0,r/2)} G_{B(x_0,r)}(x_0,y)\,\mu(dy)+ \frac{c_5V(x_0,r)}{\Ca(B(x_0,r), B(x_0,2r))}\\
&=\int_{B(x_0,r/2)} G_{B(x_0,r)}(x_0,y)\,\mu(dy)+c_5 \, \Ex(x_0,r).
\end{align*}

By iterating the above estimate and using the reverse doubling property of $\Ex(x_0,r)$
in Assumption $\ref{A1}$(iv), we obtain that
\begin{align*}
\bE^{x_0}\tau_{B(x_0,r)}\le &\int_{B(x_0,r)}G_{B(x_0,2r)}(x_0,y)\,\mu(dy)
\le c_5\sum_{k=0}^\infty \Ex(x_0,2^{-k}r)\\
\le& c_5 \Ex(x_0,r)\sum_{k=0}^\infty 2^{-k\beta_1}= c_6 \, \Ex(x_0,r),
\end{align*}
where the second inequality is due to
$\bE^{x_0} \left[ \int_0^{\tau_D}  {\bf1}_{\{x_0\}}(X_t) \,dt \right]=0$, which is a consequence of Assumption \ref{A1-b}.
The proof is complete.  \qed  \end{proof}

 \begin{lemma}\label{L:lg}
Assume that Assumptions $\ref{A1}$,
$\ref{A1-b}$,
$\EHI$ and $\IJ_{\Ex,\le}$ hold. Then $\E_{\Ex, \ge}$ holds; that is,
there exists a constant $c_1>0$ such that
almost all $x\in M$ and any $r>0$, $$\bE^x\tau_{B(x,r)}\ge c_1 \Ex(x,r).$$\end{lemma}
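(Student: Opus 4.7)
\medskip
\noindent\textbf{Proof sketch.}
The strategy is to dualize Lemma~\ref{L:ug}. There, $\EHI$ was used to derive an upper bound on $G_D(x_0,\cdot)$ on an annulus and thereby an upper bound on the exit time. Here, with $D:=B(x_0,2R)$ for $R>0$ fixed, I will combine $\EHI$ with the capacitary representation of hitting probabilities to obtain a pointwise \emph{lower} bound on $G_D(x_0,\cdot)$ on a small ball whose $\mu$-mass is comparable to $V(x_0,R)$. Integrating then gives
$\bE^{x_0}\tau_D\gtrsim V(x_0,R)/\Ca(B(x_0,R),B(x_0,2R))=\Ex(x_0,R)$,
which by Assumption~\ref{A1}(iv) is $\asymp\Ex(x_0,2R)$; rescaling yields $\E_{\Ex,\ge}$.

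The first technical ingredient is a short-jump estimate: I would fix $\epsilon\in(0,1)$ small enough (depending only on the structural constants) so that
\[
\bP^{x_0}\bigl(X_{\tau_{B(x_0,\epsilon R)}}\in B(x_0,R)^c\bigr)\le 1/2.
\]
By the L\'evy system (Lemma~\ref{Levy-sys}) the left-hand side equals $\bE^{x_0}\int_0^{\tau_{B(x_0,\epsilon R)}}J(X_s,B(x_0,R)^c)\,ds$. For $z\in B(x_0,\epsilon R)$ one has $B(x_0,R)^c\subset B(z,(1-\epsilon)R)^c$, so $\IJ_{\Ex,\le}$ combined with \eqref{eq:wondf1-1} gives $J(z,B(x_0,R)^c)\le c/\Ex(x_0,(1-\epsilon)R)$. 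Using the upper bound $\bE^{x_0}\tau_{B(x_0,\epsilon R)}\le c'\Ex(x_0,\epsilon R)$ from Lemma~\ref{L:ug} and the lower doubling in Assumption~\ref{A1}(iv) bounds the probability by $C(\epsilon/(1-\epsilon))^{\beta_1}$, which can be made $\le 1/2$ by choice of $\epsilon$.

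Hence, with probability $\ge 1/2$ the process visits the annulus $A:=B(x_0,R)\setminus B(x_0,\epsilon R)$ before leaving $D$. Let $\delta_0$ be the $\EHI$ constant, set $s_0:=\delta_0\epsilon R/2$, and use $\VD$ to cover $A$ by $N$ balls $B(y_i,s_0)$, $y_i\in A$, with $N$ bounded in terms of $\epsilon$, $\delta_0$ and the doubling constant only. A pigeonhole gives some $y_{i^*}$ with $\bP^{x_0}(\sigma_{B(y_{i^*},s_0)}<\tau_D)\ge 1/(2N)$. By Assumption~\ref{A1-b}(i)--(ii), $G_D(x_0,\cdot)$ (extended by $0$ off $D$) is non-negative on $M$ and harmonic on $D\setminus\{x_0\}$, hence on $B(y_{i^*},\epsilon R/2)\subset D\setminus\{x_0\}$ (since $d(x_0,y_{i^*})\ge\epsilon R$ and $d(x_0,y_{i^*})+\epsilon R/2<2R$). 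Applying $\EHI$ on that ball and writing $m:=\essinf_{B(y_{i^*},s_0)}G_D(x_0,\cdot)$, the identity $\bP^{x_0}(\sigma_{B(y_{i^*},s_0)}<\tau_D)=\int G_D(x_0,z)\,\nu(dz)$ for the relative capacitary measure $\nu$ of $B(y_{i^*},s_0)$ in $D$ yields
\[
\tfrac{1}{2N}\le c_{\EHI}\,m\,\Ca(B(y_{i^*},s_0),D),
\]
and $\Ca(B(y_{i^*},s_0),D)\asymp\Ca(B(x_0,R),B(x_0,2R))$ by \eqref{eq:wondf1-1} together with Assumption~\ref{A1}(ii)--(iii), giving $m\ge c/\Ca(B(x_0,R),B(x_0,2R))$. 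Integrating $G_D(x_0,\cdot)\ge m$ over $B(y_{i^*},s_0)$, and using $\VD$ to note $V(y_{i^*},s_0)\asymp V(x_0,R)$, produces $\bE^{x_0}\tau_D\ge c\,\Ex(x_0,R)$, which is $\asymp \Ex(x_0,2R)$; setting $r=2R$ finishes the proof.

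The main obstacle is the passage through the capacitary measure $\nu$ in the previous paragraph: $\EHI$ gives a Harnack comparison $\mu$-almost everywhere, whereas $\nu$ is only smooth and may be singular with respect to $\mu$. One must therefore work with the quasi-continuous version of $G_D(x_0,\cdot)$ and invoke the fact that smooth measures do not charge sets of zero capacity, on which quasi-continuous harmonic functions can differ from their $\mu$-essential envelopes. The remaining bookkeeping (choose $\epsilon$ first, then $s_0=\delta_0\epsilon R/2$, so that both $N$ and the capacity comparison depend only on structural constants) is routine but needs to be tracked so that $\Ca(B(y_{i^*},s_0),D)$ and $\Ca(B(x_0,R),B(x_0,2R))$ agree up to a universal constant.
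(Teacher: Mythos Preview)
Your argument is correct and follows essentially the same route as the paper's proof: both use the L\'evy system together with $\IJ_{\Ex,\le}$ and Lemma~\ref{L:ug} to force the process to hit an annulus with probability $\ge 1/2$ before exiting a larger ball, then cover the annulus by finitely many small balls, apply $\EHI$ to $G_D(x_0,\cdot)$ on each, and convert the capacitary representation of the hitting probability into a pointwise lower bound for the Green function, which is integrated to yield $\Ex$. The only cosmetic differences are that the paper enlarges the outer ball (taking $D=B(x,l^{k+1}r)$ with $k$ large) while you shrink the inner one (taking $B(x_0,\epsilon R)$ with $\epsilon$ small), and the paper sums the $\essinf$ over all covering balls whereas you pigeonhole to a single ball $B(y_{i^*},s_0)$; neither change affects the structure or the constants. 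Your explicit remark on passing $\EHI$ (an $\mu$-a.e.\ statement) through the capacitary measure $\nu$ via quasi-continuity is a point the paper leaves implicit.
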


\begin{proof} First note that by \eqref{e-ug1}, $\tau_{B(x,r)}<\infty$ a.s. $\bP^x$.
Next  by \eqref{vext}, we can choose
$l\ge 3\vee l_\mu$ large enough, where $l_\mu$ is in \eqref{eq:vdconq},
such that there is a constant $\rho \in(0,1)$ so that
$$
\Ex(x, r)\le \rho \Ex(x, lr) \quad \hbox{for every } x\in M \hbox{ and } r>0.
$$
Let $D=B(x,l^{k+1}r)$
where $k\ge1$ will be
determined later.
Note that the set $B(x,l^kr)\setminus B(x,r)$
is non-empty due to \eqref{eq:vdconq}.
According  to the L\'evy system, for fixed $x \in M_0$,
  \begin{align*}
  \bP^x(\sigma_{B(x,l^kr)\setminus B(x,r)}<\tau_D)
  &\ge\bP^x \left(
 X_{\tau_{B(x,r)}}\in B(x,l^kr) \right)\\
  &=1-\bP^x \left(
  X_{\tau_{B(x,r)}}\notin B(x,l^kr) \right)\\
  &\ge1-\bE^x\left[ \int_0^{\tau_{B(x,r)}}J(X_s, B(x,l^kr)^c)\,ds\right]\\
  &\ge1-c_1\frac{\Ex(x,r)}{\Ex(x,l^kr)}\ge 1-c_2 \rho^k,
  \end{align*}
  where in the third inequality we have used $\IJ_{\Ex,\le}$ and \eqref{e-ug1}, and the last inequality follows from Assumption $\ref{A1}$(iv). In particular, taking $k\ge1$ large enough such that $c_2\rho^k\le 1/2$, we have
  \be\label{e-wge}
  \bP^x \left(\sigma_{B(x,l^kr)\setminus B(x,r)}<\tau_D \right)\ge 1/2.
  \ee
Let $\delta\in (0,1)$ be the constant in $\EHI$.
By $\VD$, there exists
$L_*=L_*(l,k)\in \bN$
(independent of $x$ and $r$) such that $ \overline{B(x, l^k r)}\setminus B(x,r)\subset \cup_{i=1}^{L_*}B(x_i,\delta r/2)$ for some $x_i\in B(x,l^kr)\setminus B(x,r)$, $i=1,\cdots, L_*$, see e.g.,\ \cite[Lemma 3.1]{KT}.
Let $\nu$ be the relative
capacitary measure for $B(x,l^kr)\setminus B(x,r)$
with respect to $G_D(x,y)$,
which is supported on
$\overline{B(x, l^k r)}\setminus B(x,r)$.
By applying $\EHI$ for $G_D(x,\cdot)$ on each $B(x_i,r/2)$, we have
 \begin{equation}\label{e-ug}\begin{split}\bP^x(\sigma_{B(x,l^kr)\setminus B(x,r)}<\tau_D)&=G_D\nu(x)=
\int_{{ \overline{B(x, l^k r)}\setminus B(x,r) }}G_D(x,z)\,\nu(dz)\\
 &\le \sum_{i=1}^{L_*}\int_{B(x_i,\delta r/2)}G_D(x,z)\,\nu(dz)\\
 &\asymp \sum_{i=1}^{L_*}\mbox{ess\,inf}_{y\in B(x_i,\delta r/2)}G_D(x,y)
 \nu(B(x_i,\delta r/2)) \\
 &\le  \Ca(B(x, l^k r),D)
\sum_{i=1}^{L_*}\mbox{ess\,inf}_{y\in B(x_i,\delta r/2)}G_D(x,y)\\
 &\le c_3\Ca(B(x,r),B(x,2r))\sum_{i=1}^{L_*}\mbox{ess\,inf}_{y\in B(x_i,\delta r/2)}G_D(x,y),
 \end{split}\end{equation}
where in
the last inequality (with $c_3=c_3(l,k)>0$) is due to \eqref{eq:wondf1-1}.
Inequalities \eqref{e-wge} and \eqref{e-ug} imply that
\be\label{3-11-17}
\sum_{i=1}^{L_*}\mbox{ess\,inf}_{y\in B(x_i,\delta r/2)}G_D(x,y)\ge \frac{c_4}{\Ca(B(x,r),B(x,2r))}.
\ee
Noting that $B(x_i,\delta r/2)\subset B(x_i,r/2)\subset B(x,(l^k+1/2)r)\setminus B(x,r/2)$ for $1\le i\le L_*$,
we have
\begin{align*}
\int_{B(x,(l^k+1/2)r)\setminus B(x,r/2)} G_{D}(x,z)\,\mu(dz)
&\ge L_*^{-1}\sum_{i=1}^{L_*}\int_{B(x_i,\delta r/2)} G_{D}(x,z)\,\mu(dz)\\
&\ge L_*^{-1}\sum_{i=1}^{L_*}\mbox{ess\,inf}_{y\in B(x_i,\delta r/2)}G_D(x,y)\mu(B(x_i,\delta r/2))\\
&\ge c_5 \Ex(x,r),
\end{align*}
where in the last inequality we used
$\VD$ and \eqref{3-11-17}. Hence taking ${r^*}=l^{k+1}r$
and using  Assumption $\ref{A1}$(iv), we have
\begin{align*}
\E^x[\tau_{B(x,{r^*})}]\ge \int_{B(x,{r^*})\setminus
B(x,{r^*}/(2l^{k+1}))} G_{B(x,{r^*})}(x,z)\,\mu(dz)\ge c_5 \Ex(x,{r^*}/l^{k+1})
\ge c_6 \Ex(x,{r^*}).
\end{align*}
The proof is complete.
\qed
  \end{proof}

\noindent{\bf Proof of Theorem \ref{C:1.12}.}\quad According to Lemmas \ref{L:ug} and \ref{L:lg}, we know that under Assumptions \ref{A1}
and $\ref{A1-b}$,
$\EHI$ and $\IJ_{\Ex,\le}$
imply $\E_{\Ex}$. Using Remark \ref{intelem} and \eqref{vext}, we find that $\J_{\Ex,\le}$ implies $\IJ_{\Ex,\le}$. Note that in the statement of Corollary \ref{C:cor-1}, we indeed have that if $\J_{\phi}$ holds (without  $\E_\phi$), then
$$\FK(\phi)+\PI(\phi)+\CSJ(\phi)\Longrightarrow \EHI(\phi),$$ which can be seen from Theorem \ref{T:ehi-1} (i) and (ii). Therefore, the desired assertion follows from Corollary \ref{C:cor-1}.\qed

\section{Example: symmetric jump processes
of variable orders}\label{S:6}

In this section,
we apply the main results of this paper to show that
$\WEHI^+(\phi)$ and $\EHI (\phi)$
hold  for a symmetric stable-like process on $\bR^d$ of variable
order with state-dependent scale function $\phi (x, r)$.
This example is a modification from \cite[Example 2.3]{BKK}.
Let $\alpha:\bR^d\to [\alpha_1,\alpha_2]\subset(0,2)$ be such that
\begin{equation}\label{e:con}
|\alpha(x)-\alpha(y)|\le \frac{c}{\log (2/|x-y|)}  \quad \hbox{ for } |x-y|<1
\end{equation}
holds with
some constant $c>0$. Suppose that
$$
 \frac{c_1}{|x-y|^{d+\alpha(x)\wedge \alpha(y)}}\le J(x,y)\le \frac{c_2}{|x-y|^{d+\alpha(x)\vee \alpha(y)}} \quad
\mbox{for } |x-y|\le 1
$$
and
$$
J(x,y)\asymp \frac{1}{|x-y|^{d+\alpha_1}}\quad \mbox{for }|x-y|> 1.
$$

Define
 \begin{equation}
 \phi(x,r)= \begin{cases}
  r^{\alpha(x)}, &  0<r\le 1;\\
  r^{\alpha_1},&  r> 1.
\end{cases}
\end{equation}
We claim that $\phi (x, r)$ has properties \eqref{pvd1} and \eqref{pvd}.
Note that for every $x\in \bR^d$ and $0<r\le R$,
\begin{equation*}\begin{split}
 \frac{\phi(x,R)}{\phi(x,r)}&= \begin{cases}
 \left( \frac R{r}\right)^{\alpha(x)}, &  0<r\le R\le  1; \smallskip  \\
 \frac{R^{\alpha_1}}{r^{\alpha(x)}}, &  0<r\le1 < R; \smallskip \\
 \left( \frac R{r}\right)^{\alpha_1}, &  1<r\le R.
\end{cases}\end{split}
\end{equation*}
Since $\alpha (x)\in [\alpha_1, \alpha_2]$, it is clear that
$$ \left( \frac R{r}\right)^{\alpha_1} \le \frac{\phi(x,R)}{\phi(x,r)}\le   \left( \frac R{r}\right)^{\alpha_2};
$$
and so  $\phi(x,r)$ satisfies condition \eqref{pvd1}.
Note that $\sup_{0<r\leq 1} {\log (1/r)}/{\log (2/r)} <\infty$.
Thus by assumption \eqref{e:con}, there is a constant $c_3\geq 1$ so that
 for every $0<r\leq 1$ and for any $x,y\in \bR^d$ with $|x-y|\le r$,
$\phi (x, r) =r^{\alpha (x)}
\leq c_3 r^{\alpha (y) }= c_3 \phi (y, r)$.
When $r>1$, $\phi (x, r)=r^{\alpha_1} = \phi (y, r)$ for every $x, y\in \bR^d$.
Hence $\phi (x, r)$ satisfies  \eqref{pvd}.

We next verify that $\J_\phi$ holds
for $J(x, y)$.   Indeed,
by \eqref{e:con} there is a constant $c_4\ge 1$ such that
for all $x,y\in \bR^d$ with $|x-y|\le 1$,
$$1\le \frac{|x-y|^{\alpha(x)\wedge \alpha(y)}}{|x-y|^{\alpha(x)}} \le  |x-y|^{-|\alpha(x)-\alpha(y)|}= \exp\Big(|\alpha(x)-\alpha(y)|\log (1/|x-y|)\Big)\le c_4.$$
Similarly, there is a constant $c_5>0$ such that for all $x,y\in \bR^d$ with $|x-y|\le 1$,
$$
c_5\le \frac{|x-y|^{\alpha(x)\vee\alpha(y)}}{|x-y|^{\alpha(x)}} \le 1.
$$
Therefore by the definition of $\phi(x,r)$, we have
\begin{equation}\label{e:6.3}
J(x,y)\asymp \frac{1}{|x-y|^d \phi(x,|x-y|)},\quad x,y\in \bR^d;
\end{equation}
that is, $\J_\phi$ holds.

\medskip

Define a Dirichlet form $(\sE, \sF)$ on $L^2(\bR^d; dx)$ as follows:
$$
\sE(f,f)=
\int_{\bR^d\times \bR^d \setminus {\rm diag}}
(f(y)-f(x))^2J(x,y)\,dx\,dy$$ and $\sF$ is the closure the class of Lipschitz functions
on $\bR^d$ with
 compact support with respect to the norm
 $\sE_1(f, f)^{1/2}:= \left(\sE(f,f)+\|f\|^2_{L^2(\bR^d;dx)} \right)^{1/2}$.
 By \cite[Theorem 1.3]{BBCK}, there exists $\sN\subset \bR^d$ having zero capacity with respect to the Dirichlet form
$(\sE,\sF)$, and there is a conservative and symmetric Hunt process $X:=(X_t, t\ge0, \bP^x)$ with state space $\bR^d\setminus \sN$. Note that, in the present setting $X$ is a symmetric jump process
of variable
order.
Furthermore, since
$$
J(x,y)\ge   \frac{c_6}{|x-y|^{d+\alpha_1}},\quad x,y\in \bR^d,
$$
comparing it with rotationally symmetric $\alpha_1$-stable process, we have the following Nash inequality:
$$
\|f\|_2^{2+{2\alpha_1}/{d}}\le c_7 \sE(f,f)\|f\|_1^{{2\alpha_1}/{d}},\quad f\in \sF.
$$
 Hence, for any $x,y\in \bR^d\setminus \sN$ and $t>0$, the heat kernel $p(t,x,y)$ of the process $X$ exists and satisfies that
\begin{equation}\label{eff}
p(t,x,y)\le c_8 t^{-{d}/{\alpha_1}}.
\end{equation} By \cite[Theorem 3.5]{BKK}, we know that under the present setting, $p(t,x,y)$ can be chosen to be jointly continuous
in
$(x,y)$ for every fixed $t>0$. Therefore, without loss of generality, we can assume the exceptional set $\sN\subset \bR^d$ above to be empty, i.e.,\ $\sN=\emptyset.$

\begin{proposition}\label{P:6.1}
 Let $X$ be the process defined above.
Then the following holds.
\begin{itemize}
\item[{\rm(i)}] For any $x\in \bR^d$  and $r>0$,
$$\bE^x[\tau_{B(x,r)}]\asymp \phi(x,r).$$

\item[{\rm(ii)}] For
any $x\in \bR^d$  and $r>0$,
$$\Ca(B(x,r/2), B(x,r))\asymp r^d/\phi(x,r).$$

\item[{\rm(iii)}]
Both $\WEHI^+ (\phi)$ and $\EHI (\phi)$ hold
for the process $X$.
\end{itemize} \end{proposition}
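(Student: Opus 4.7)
The prerequisites $\VD$ (trivially, since $V(x,r)\asymp r^d$), conditions \eqref{pvd1} and \eqref{pvd} for $\phi$, and $\J_\phi$ in the form \eqref{e:6.3} have already been verified in the preceding discussion. My plan is therefore to first establish (i); then (ii) is immediate from Proposition~\ref{P:ec} together with $V(x,r)\asymp r^d$; and (iii) will follow from Corollary~\ref{C:cor-1} once one of its five equivalent conditions is supplied.

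For the upper bound in (i), I would use the L\'evy-system identity
\[
1=\bE^x\Big[\int_0^{\tau_{B(x,r)}}J(X_s,B(x,r)^c)\,ds\Big]\ge \bE^x[\tau_{B(x,r)}]\cdot\inf_{z\in B(x,r)}J(z,B(x,r)^c).
\]
For any $z\in B(x,r)$ and $y\in B(x,2r)^c$ one has $|z-y|\asymp|x-y|$, so \eqref{e:6.3} combined with \eqref{pvd} gives
\[
J(z,B(x,r)^c)\ge J(z,B(x,2r)^c)\ge c\int_{|x-y|\ge 2r}\frac{dy}{|x-y|^d\,\phi(x,|x-y|)}\asymp\frac{1}{\phi(x,r)},
\]
whence $\bE^x[\tau_{B(x,r)}]\le C\phi(x,r)$. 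The lower bound is more subtle because $J(z,B(x,r)^c)\to\infty$ as $z\to\partial B(x,r)$, so the inequality cannot be inverted in the other direction. Instead I would establish the tail bound $\bP^x(\tau_{B(x,r)}\le t)\le Ct/\phi(x,r)$ and then use $\bE^x[\tau_{B(x,r)}]\ge t\,\bP^x(\tau_{B(x,r)}>t)$ at $t=\phi(x,r)/(2C)$ to conclude. For the tail bound I would split
\[
\bP^x(\tau_{B(x,r)}\le t)\le \bP^x(|X_t-x|\ge r/2)+\bP^x(\tau_{B(x,r)}\le t,\,X_t\in B(x,r/2))
\]
and bound each piece using the mean-displacement estimate $\bP^y(|X_s-y|\ge r)\le Cs/\phi(y,r)$, a consequence of $\IJ_{\phi,\le}$ (and $\J_{\phi,\le}$ implies $\IJ_{\phi,\le}$ by Remark~\ref{intelem}). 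The second piece is then controlled by the strong Markov property at $\tau_{B(x,r)}$ together with the same mean-displacement bound applied from the exit point.

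For (iii), with $\J_\phi$ and $\E_\phi$ now in hand, Corollary~\ref{C:cor-1} reduces the task to verifying any single condition in its chain of equivalences. Proposition~\ref{P:CSJ} supplies $\CSJ(\phi)$ at once. The plan for $\PI(\phi)$ and $\FK(\phi)$ is to localize using the H\"older condition \eqref{e:con}: on a ball $B(x_0,r)$ with $r\le 1$, \eqref{e:con} gives $|\alpha(y)-\alpha(x_0)|\le c/\log(2/r)$ for every $y\in B(x_0,r)$, and hence $J(y,z)\asymp |y-z|^{-d-\alpha(x_0)}$ throughout $B(x_0,r)\times B(x_0,r)$ with constants independent of $r$. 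The restriction of $(\sE,\sF)$ to such a ball is thus comparable to the Dirichlet form of the isotropic $\alpha(x_0)$-stable process, for which the classical Poincar\'e and Faber-Krahn inequalities give
\[
\int_{B(x_0,r)}(f-\bar f_{B(x_0,r)})^2\,d\mu\le Cr^{\alpha(x_0)}\sE(f,f)=C\phi(x_0,r)\sE(f,f)
\]
and the corresponding lower bound on $\lambda_1(D)$. For $r>1$ the scale function is $\phi(x,r)=r^{\alpha_1}$ and the global lower bound $J(x,y)\ge c|x-y|^{-d-\alpha_1}$ reduces the estimates to the constant-order $\alpha_1$-stable case. Applying Theorem~\ref{T:ehi-1}(ii) then yields $\WEHI^+(\phi)$, and the remaining equivalences in Corollary~\ref{C:cor-1} deliver $\EHI(\phi)$ as well.

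The main obstacle is the lower bound in (i): the singular behavior of $J(z,B(x,r)^c)$ near $\partial B(x,r)$ forces the tail-estimate route rather than a direct L\'evy-system inversion. A secondary delicate point is proving $\PI(\phi)$ with genuinely state-dependent rate $\phi(x,r)=r^{\alpha(x)}$; the H\"older hypothesis \eqref{e:con} is precisely what is needed in order to treat $\alpha$ as effectively constant on small balls and reduce to the constant-order stable setting.
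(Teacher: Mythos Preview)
Your upper bound for (i) via the L\'evy system is correct and in fact cleaner than the paper's, which splits into the cases $r\le 1$ and $r>1$. Part (ii) follows as you say. The genuine gap is in the lower bound for (i): the displacement estimate $\bP^y(|X_s-y|\ge r)\le Cs/\phi(y,r)$ is \emph{not} a consequence of $\IJ_{\phi,\le}$. That condition controls only the instantaneous rate of single jumps of size $\ge r$ from the current position; it says nothing about displacement accumulated through many smaller jumps, and there is no uniform comparison of $\phi(X_s,r)$ with $\phi(y,r)$ once the process has wandered. Worse, your strong-Markov step applies the estimate at the exit point $z=X_{\tau_{B(x,r)}}$, which may lie anywhere in $\bR^d$: for $r\le 2$ one has $\phi(z,r/2)=(r/2)^{\alpha(z)}$, and taking the supremum over $z$ would yield only $\bE^x[\tau_{B(x,r)}]\gtrsim r^{\alpha_2}$ rather than $r^{\alpha(x)}$. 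The paper handles the two regimes separately: for $r\le 1$ it quotes \cite[Example~2.3 and Theorem~2.1]{BKK}, whose proof truncates the jump kernel at scale $\asymp r$ and uses \eqref{e:con} to compare the truncated process on $B(x,r)$ with a fixed-order $\alpha(x)$-stable process; for $r>1$ it first proves off-diagonal heat-kernel upper bounds \eqref{eq:fenoow} via the Nash inequality combined with Davies' method and Meyer's construction, and then runs exactly your splitting argument --- which succeeds in that range precisely because $\phi(z,r)=r^{\alpha_1}$ no longer depends on $z$.

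For (iii) the paper takes a different and shorter route: rather than verifying $\PI(\phi)$ and $\FK(\phi)$ by localization, it invokes $\EHR$ from \cite[Theorem~3.1]{BKK} and then reads off $\FK(\phi)$, $\PI(\phi)$, $\CSJ(\phi)$ from Propositions~\ref{P:fk}--\ref{P:CSJ}, which require only $\EHR$, $\E_\phi$ and $\J_{\phi,\le}$ as input. Your localization idea is not wrong in spirit, but the pointwise comparison $J(y,z)\asymp|y-z|^{-d-\alpha(x_0)}$ on $B(x_0,r)\times B(x_0,r)$ fails when $|y-z|\ll r$, since \eqref{e:con} gives only $|\alpha(y)-\alpha(x_0)|\lesssim 1/\log(2/r)$ and hence $|y-z|^{\alpha(y)-\alpha(x_0)}$ is uncontrolled; one would need to argue more carefully (for instance via the crude global lower bound $J(y,z)\ge c|y-z|^{-d-\alpha_2}$ together with $r^{\alpha_2}\le\phi(x_0,r)$ for $r\le 1$) to make this work.
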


\begin{proof}
(i) It is clear that there is a constant $c_1>0$ so that
$$
\int_{\{|x-y|\ge1\}} J(x,y)\,dy \le c_1 \quad \hbox{for every } x\in \bR^d.
$$
 Thus, according to \cite[Example 2.3 and Theorem 2.1]{BKK}, for all $x\in \bR^d$ and $0<r\le 1,$
$$\bP^x(\tau_{B(x,r)}\le t)\le c_2 t r^{-\alpha(x)}.$$ Hence, for all $x\in \bR^d$ and $0<r\le 1$,
\begin{equation}\label{eff1}\bE^x[\tau_{B(x,r)}] \ge \frac{1}{2c_2}r^{\alpha(x)}\bP^x \left(\tau_{B(x,r)}\ge \frac{1}{2c_2}r^{\alpha(x)}\right)\ge \frac{1}{4c_2}r^{\alpha(x)}.\end{equation}

 As mentioned just before
 this proposition,
 the heat kernel $p(t,x,y)$ of the process $X$ exists such that \eqref{eff} holds for all $x,y\in \bR^d$ and $t>0$. On the other hand, it immediately follows from the definition of $J(x,y)$ that there is a constant $c_3>0$ such that
 $$
 \sup_x\int_{B(x,r)}|x-y|^2J(x,y)\,dy\le  c_3 r^{2-\alpha_1}
 $$ and
  $$
 \sup_x\int_{B(x,r)^c}J(x,y)\,dy\le  c_3 r^{-\alpha_1}
 $$
 hold for all $r>1$. Then, by using the Davies argument and the Meyer decomposition, we can obtain that for all $t>0$
and
$x,y\in \bR^d$ with $|x-y|>1$,
\be\label{eq:fenoow}
p(t,x,y)\le
\frac{c_4}
{t^{d/\alpha_1}}
\left(1+\frac{|x-y|}{t^{1/\alpha_1}}\right)^{-d-\alpha_1}.
\ee
In fact, this can be verified by following the proof of \cite[Theorem 1.4]{BGK} line by line (see \cite[Subsection 3.2, Page 153--155]{BGK} for more details) as well as choosing the truncated constant $K>1$ and replacing $\alpha$ and $\beta$ by $d$ and $\alpha_1$ respectively.
Note that, upper bound estimate \eqref{eq:fenoow} says
that
for $x,y\in \bR^d$ with $|x-y|>1$, $p(t,x,y)$
is bounded, up to a constant multiple, by  that of rotationally symmetric $\alpha_1$-stable  processes on $\bR^d$.
Furthermore, it is easy to see that this process is conservative (see e.g.\ \cite[Theorem 1.1]{MUW}). Therefore, we can arrive at that
for all $x\in \bR^d$ and $r>1$,
\begin{equation}\label{eff2}\bE^x[\tau_{B(x,r)}] \ge c_5 r^{\alpha_1}.\end{equation} Indeed, by \eqref{eq:fenoow} and the conservativeness of the process $X$, we know that for all $x\in \bR^d$, $t>0$ and $r>1\vee t^{1/\alpha_1}$,
$$\bP^x(|X_t-x|\ge r)=\int_{B(x,r)^c}p(t,x,y)\,dy\le \frac{c_4}
{t^{d/\alpha_1}}\int_{B(x,r)^c}
\left(1+\frac{|x-y|}{t^{1/\alpha_1}}\right)^{-d-\alpha_1}\,dy\le \frac{c_6t}{r^{\alpha_1}}.$$ This along with the strong Markov property of $X$ yields that for all $x\in \bR^d$, $t>0$ and $r\ge 2(1\vee (2t)^{1/\alpha_1})$,
\begin{align*}\bP^x(\tau_{B(x,r)}\le t)\le &\bP^x(\tau_{B(x,r)}\le t,|X_{2t}-x|\le r/2)+\bP^x(|X_{2t}-x|\ge r/2)\\
\le &\sup_{z\in B(x,r)^c}\sup_{s\le t}\bP^z(|X_{2t-s}-z|\ge r/2)+\bP^x(|X_{2t}-x|\ge r/2)\\
\le& \frac{c_7t}{r^{\alpha_1}}.\end{align*} Hence, there are constants $r_0\ge1$ and $c_8>0$ such that for all $x\in \bR^d$ and $r\ge r_0$,
$$\bP^x(\tau_{B(x,r)}\le c_8r^{\alpha_1})\le  1/2.
$$
 In particular, for all $x\in \bR^d$ and $r\ge r_0$,
$$\bE^x[\tau_{B(x,r)}] \ge  c_8r^{\alpha_1}\bP^x(\tau_{B(x,r)}\ge c_8r^{\alpha_1})\ge {c_8}r^{\alpha_1} /2 .
$$
Note that for all $x\in \bR^d$ and $1<r\le r_0$, we have by \eqref{eff1}
$$\bE^x[\tau_{B(x,r)}] \ge \bE^x[\tau_{B(x,1)}] \ge \frac{1}{4c_2}.$$ Combining both estimates above, we prove \eqref{eff2}.

Now we consider  upper bound for $\bE^x[\tau_{B(x,r)}]$. First, assume $r\leq 1$.
Note that the sum
$\sum_{s\le  t\wedge \tau_{B(x,r)}}{\bf 1}_{\{|X_s-X_{s-}|>2r\}}$ is 1 if there is a jump of size at least $2r$ by
$t\wedge\tau_{B(x,r)}$,  in which case the process exits $B(x,r)$
by time $t$.  It is $0$ if there is no such jump. So, for all $y\in B(x,r)$,
\begin{align*}
\bP^y(\tau_{B(x,r)} \leq t)
&\ge  \bE^y \Big[\sum_{s\le  t\wedge \tau_{B(x,r)}}{\bf 1}_{\{|X_s-X_{s-}|>2r\}}\Big]\\
&=\bE^y \int_0^{t\wedge\tau_{B(x,r)}} \int_{B(X_s, 2r)^c}J(X_s,z)\,dz\,ds\\
&\ge\frac{c_9 \bE^y (t\wedge\tau_{B(x,r)})}{\phi(x,r)}\ge \frac{  c_9t\bP^y(\tau_{B(x,r)}>t)}{\phi(x,r)},
\end{align*}
 where in the second inequality we have used the facts that
$$\phi(x,r)\asymp\phi(y,r),\quad |x-y|\le r$$ and
$$
\int_{B(x,2r)^c} J(x,z)\,dz
\ge c_{10}\Big(
\int_{B(x,2)\setminus B(x,2r)} \frac{1}{|x-z|^{d+\alpha(x)}}\,dz
+\int_{B(x,2)^c} \frac{1}{|x-z|^{d+\alpha_1}}\,dz\Big)
\asymp r^{-\alpha(x)}.$$
Therefore,
$$\bP^y(\tau_{B(x,r)}>t)\le 1- \frac{c_{9} t\bP^y(\tau_{B(x,r)}>t)}{\phi(x,r)}.$$ Taking
$t=c_{9}^{-1} \phi(x,r)$
so that $\frac{c_{9} t}{\phi(x,r)}= 1$,
we obtain that for all $y\in B(x,r)$, $$\bP^y(\tau_{B(x,r)}>t)\le \frac{1}{2}.$$
Using the strong Markov property at time $mt$ for $m=1,2,\ldots,$
$$\bP^x(\tau_{B(x,r)}>(m+1)t)\le \bE^x\left( \bP^{X_{mt}}(\tau_{B(x,r)}>t);\tau_{B(x,r)}>mt\right)\le \frac{1}{2}\bP^x(\tau_{B(x,r)}> mt).$$ By induction
$\bP^x(\tau_{B(x,r)}> mt)\le 2^{-m}$. With this choice of $t$, we have that for all $x\in \bR^d$ and $r\in(0,1]$,
$$
\bE^x[\tau_{B(x,r)}]\le c_{11}  r^{\alpha(x)}.
$$

It is easily seen from \eqref{eff} that for all $x,x_0\in \bR^d$ and $t>0$,
$$
\bP^x(X_t\in B(x_0,r))\le c_{12} r^d t^{- {d}/{\alpha_1}}.
$$
For $r\ge 1$, taking
$t=(2c_{12})^{\alpha_1/d} r^{\alpha_1}$ so that
$c_{12} r^d t^{- {d}/{\alpha_1}}=1/2$, we find that
$$
\bP^x(\tau_{B(x_0,r)}> t)\le \bP^x(X_t\in B(x_0,r))\le \frac{1}{2}.
$$
Using the strong Markov property of $X$ again, we arrive at that for all $x,x_0\in \bR^d$,
$\bP^x(\tau_{B(x_0,r)}>kt)\le 2^{-k}$ and so for all $x\in \bR^d$ and $r>1$,
$$
\bE^x[\tau_{B(x,r)}]\le c_{13}  r^{\alpha_1}.
$$

(ii) This follows immediately from  Proposition \ref{P:ec} and the assertion (i).

(iii)  $\EHR$ holds by  \cite[Theorem 3.1]{BKK}.
On the other hand, as we noted in \eqref{e:6.3}, $\J_\phi$ holds, while $\E_{\phi}$ is established in (i).
Thus according to Propositions
\ref{P:fk}--\ref{P:CSJ},
we have $\FK(\phi)$, $\PI(\phi)$  and $\CSJ(\phi)$ for this symmetric non-local Dirichlet form.
The desired conclusion now follows from Corollary \ref{C:cor-1}.
  \qed\end{proof}

\medskip

\noindent{\bf Acknowledgement}. We thank the referee for  helpful comments on the paper.

\vskip 0.3truein

\noindent {\bf Zhen-Qing Chen}

\smallskip \noindent
Department of Mathematics, University of Washington, Seattle,
WA 98195, USA

\noindent
E-mail: \texttt{zqchen@uw.edu}

\bigskip

\noindent {\bf Takashi Kumagai}

\smallskip \noindent
 Research Institute for Mathematical Sciences,
Kyoto University, Kyoto 606-8502, Japan

\noindent Email: \texttt{kumagai@kurims.kyoto-u.ac.jp}

\bigskip

\noindent {\bf Jian Wang}

\smallskip \noindent
College of Mathematics and Informatics \& Fujian Key Laboratory of Mathematical
Analysis and Applications (FJKLMAA), 350007, Fuzhou, P.R. China.

\noindent Email: \texttt{jianwang@fjnu.edu.cn}

 \end{document}